\documentclass[psamsfonts]{amsart}

\usepackage{amssymb, amsthm, amsmath, amsfonts, mathrsfs}
\usepackage{enumerate}
\usepackage[all,arc]{xy}
\usepackage{hyperref}
\usepackage{graphicx}
\usepackage{overpic}
\usepackage{xcolor}
\usepackage{mathtools}

\newtheorem{thm}{Theorem}[section]
\newtheorem{cor}[thm]{Corollary}
\newtheorem{prop}[thm]{Proposition}
\newtheorem{lem}[thm]{Lemma}
\newtheorem{conj}[thm]{Conjecture}

\theoremstyle{definition}
\newtheorem{defn}[thm]{Definition}

\newtheorem{obs}[thm]{Observation}

\theoremstyle{remark}
\newtheorem{rem}[thm]{Remark}
\newtheorem{rems}[thm]{Remarks}

\makeatletter
\let\c@equation\c@thm
\makeatother
\numberwithin{equation}{section}

\definecolor{orange}{hsb}{0.067,1,1}
\definecolor{lime}{hsb}{0.33,1,1}
\definecolor{purple}{hsb}{0.83,1,0.5}

\title{L-space surgeries on $(1,1)$-knots in $S^1\times S^2$}

\author{Qingfeng Lyu}
\address{Department of Mathematics, Boston College\\ Chestnut Hill, MA 02467}
\email{lyuqi@bc.edu}

\author{Zipei Nie}
\address{Department of Mathematics, University of Illinois at Urbana–Champaign, Urbana,
IL 61801, USA}
\email{znie@illinois.edu}

\date{\today}

\begin{document}

\begin{abstract}
    We extend the diagrammatic criterion for $(1,1)$ L-space knots in~\cite{greene2018space} to $(1,1)$-knots in $S^1\times S^2$. We also discuss applications to L-space surgeries on two-bridge links.
\end{abstract}

\maketitle

\section{Introduction}
\label{sec:intro}

A rational homology $3$-sphere $Y$ is called an L-space if it has minimal Heegaard Floer homology, that is, $\mathrm{rank}\;\widehat{HF}(Y)=|\mathrm{H}_1(Y;\mathbb{Z})|$. L-spaces are Heegaard Floer-theoretic generalizations of lens spaces, and their topological characterization remains an important open problem. This problem is closely related to the L-space conjecture~\cite{boyer2013spaces,juhasz2015survey}.

A knot $K$ in an L-space $Y$ is called an L-space knot if it admits a non-trivial L-space surgery. The L-space surgery slopes of such a knot form a computable interval~\cite{rasmussen2017floer}, and it is also of interest to find topological characterizations of L-space knots. Such characterizations have been established for $(1,1)$-knots in $S^3$ and lens spaces by Greene, Lewallen, and Vafaee~\cite{greene2018space} using $(1,1)$-diagrams.

A $(1,1)$-knot $K\subset Y$ is a knot that has genus-one bridge number one. Its $(1,1)$ position can be described by a $(1,1)$-diagram $(\Sigma,\alpha,\beta,z,w)$, where $(\Sigma,\alpha,\beta)$ is a genus-one Heegaard diagram for $Y$, and $z,w$ are two basepoints on the Heegaard torus $\Sigma$. The knot $K$ can be obtained by taking boundary-parallel arcs in the two solid tori that connect the two basepoints and avoid the compression disks bounded by $\alpha$ and $\beta$.

\begin{figure}[!hbt]
    \centering
    \includegraphics[width=0.5\linewidth]{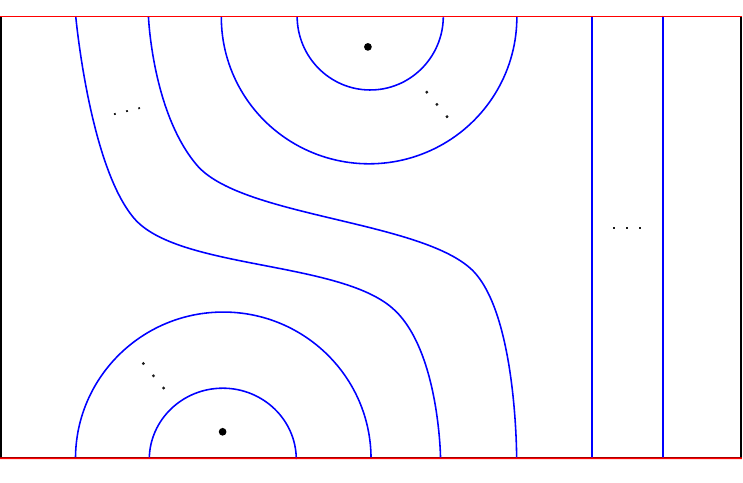}
    \put(-4,0){\color{red}$\alpha$}
    \put(-17,40){\color{blue}$\beta$}
    \put(-123,10){$z$}
    \put(-87,103){$w$}
    \caption{A reduced $(1,1)$-diagram}
    \label{fig:reduced11}
\end{figure}

Given a $(1,1)$-diagram $(\Sigma,\alpha,\beta,z,w)$, one can always isotope the two curves $\alpha,\beta$ on the two-basepoint torus $(\Sigma,z,w)$ so that every bigon contains a basepoint. Such a $(1,1)$-diagram is called \textit{reduced}. A reduced $(1,1)$-diagram has a standard form as in Figure~\ref{fig:reduced11}. Notice that $\alpha$ cuts $\beta$ into several arcs. We call the $\beta$-arcs attached to the same side of $\alpha$ \textit{rainbow arcs}, and the $\beta$-arcs connecting different sides of $\alpha$ \textit{vertical arcs}.

\begin{defn}[\cite{greene2018space}, Definition 1.1]
    A reduced $(1,1)$-diagram $(\Sigma,\alpha,\beta,z,w)$ is called coherent if there exist orientations of $\alpha,\beta$ that induce coherent orientations on the boundary of every embedded bigon $(D,\partial D)\subset (\Sigma,\alpha\cup\beta)$.
\end{defn}

\begin{rem}
    If a reduced $(1,1)$-diagram is put in standard form as in Figure~\ref{fig:reduced11}, then it is coherent if and only if an orientation of $\beta$ induces coherent orientations for all the rainbow $\beta$-arcs.
\end{rem}

\begin{thm}[\cite{greene2018space}, Theorem 1.2]
    A reduced $(1,1)$-diagram in $S^3$ or a lens space represents an L-space knot if and only if it is coherent.
    \label{thm:GLV}
\end{thm}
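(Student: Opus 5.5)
The plan is to compute $\widehat{HFK}$ from the diagram and compare it with the known characterization of L-space knots in an L-space $Y$. A knot $K\subset Y$ admits a nontrivial L-space surgery if and only if $\widehat{HFK}(Y,K)$ has, in each $\mathrm{Spin}^c$ structure of $Y$, a "staircase" structure. For $(1,1)$-knots this can be phrased as the Alexander grading being realized by the generators in a way compatible with a chain of length-one differentials that alternate in direction. For our purposes we use the following form, after Ozsváth–Szabó and Rasmussen: $K$ is an L-space knot iff for each $\mathfrak{s}\in\mathrm{Spin}^c(Y)$ the group $\widehat{HFK}(Y,K,\mathfrak{s})$ has rank equal to an odd number $2n+1$, is supported in distinct Alexander gradings, and the full knot Floer complex $CFK^\infty(Y,K,\mathfrak{s})$ is a staircase.

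\textbf{Step 1 (complex from the diagram).} For a reduced $(1,1)$-diagram, the generators of $\widehat{CFK}$ are the intersection points $\alpha\cap\beta$. Every disk counted in the full complex $CFK^\infty$ is a bigon, possibly immersed and lifted to the universal cover $\mathbb{R}^2$. Since the diagram is reduced, every embedded bigon contains a basepoint, so $\widehat{HFK}$ has rank $|\alpha\cap\beta|$. The differentials of $CFK^\infty$ are determined by bigons weighted by $U^{n_w}V^{n_z}$, and the generators split into $\mathrm{Spin}^c$ classes according to lifts.

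\textbf{Step 2 (coherent implies L-space).} In standard form, follow $\beta$ with a fixed orientation. Within each $\mathrm{Spin}^c$ class, consecutive intersection points along the lift of $\beta$ to the cover are joined by rainbow arcs. Each rainbow arc cobounds with $\alpha$ an embedded bigon containing $z$ or $w$, by reducedness, and these bigons give the staircase steps. Coherence means all rainbow arcs are traversed in the same sense, so the bigons alternate between containing $z$ and containing $w$ in the way a staircase requires. There are no other immersed bigons beyond these, since an immersed bigon would have to turn around via an incoherently oriented rainbow. The complex is therefore a staircase in each $\mathrm{Spin}^c$ structure, and large surgery is an L-space.

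\textbf{Step 3 (L-space implies coherent).} If two rainbow arcs are oppositely oriented, locate two nested rainbows of opposite orientation. Their nesting produces a generator whose Alexander grading duplicates that of another generator, or a differential of the wrong shape, and this contradicts the staircase condition. One can argue by counting: $\widehat{HF}$ of large surgery computed via the mapping cone exceeds $|H_1|$.

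The main obstacle is Step 3, namely showing that any incoherence forces extra rank in some $\mathrm{Spin}^c$ structure. I would first try the comparison $\mathrm{rank}\,\widehat{HFK}=|\alpha\cap\beta|$ against the staircase bound $\sum_{\mathfrak s}(2\,\deg+1)$, computed via the Alexander polynomial; an incoherent rainbow should make the former strictly larger.
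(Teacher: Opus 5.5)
The paper does not prove this statement. It quotes it from \cite{greene2018space} and remarks only that the proof there uses knot Floer homology, so your choice of tools matches the source. However, the proposal has a genuine gap in each direction.

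\textbf{Step 2 (coherent implies L-space).} This step rests on the claim that a coherent diagram has no immersed bigons beyond the embedded ones under rainbow arcs. That claim is false, and the staircase identification collapses without it. All rainbows on one side of $\alpha$ are nested around a single basepoint, so each rainbow bigon has multiplicity $1$ at one basepoint and $0$ at the other. A complex built only from these bigons therefore has every arrow weighted by $U$ or $V$ to the first power, and the spectral sequence from $\widehat{HFK}(Y,K)$ to $\widehat{HF}(Y)$ would degenerate after $d_1$.

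Now take $T(3,4)$, a $(1,1)$ L-space knot, so its reduced diagram is coherent. Its Alexander polynomial is $\Delta=t^3-t^2+1-t^{-2}+t^{-3}$, so its staircase has a vertical step of length $2$, i.e.\ $d_2\neq 0$. Hence its diagram must contain bigons that are only immersed in $\Sigma$ and cover a basepoint twice. So ``an immersed bigon would have to turn around at an incoherent rainbow'' is exactly what fails.

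The $\mathrm{Spin}^c$ bookkeeping is also off. Rainbow arcs join points in the same class, but when $|H_1(Y)|>1$ each vertical arc shifts the class by a generator of $H_1(Y)$. So points in one class are not consecutive along $\beta$, and the ``alternation'' you need is never established. To repair this direction you must either control all immersed bigons in the universal cover, or compute the full Alexander and Maslov bigrading of every intersection point and show it has staircase shape, so that grading constraints pin down the differentials.

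\textbf{Step 3 (L-space implies coherent).} This step is not yet an argument. Take two adjacent, oppositely oriented rainbows with endpoints $a_1<b_1<b_2<a_2$ along $\alpha$. Locally they only give $A(a_1)-A(a_2)=A(b_1)-A(b_2)$, since the quadrilateral between them misses both basepoints. Whether two intersection points land in the same relative $\mathrm{Spin}^c$ structure depends on how $\beta$ continues globally, which you do not analyze.

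The proposed count is also the wrong inequality. Rank at most one per Alexander grading gives $\mathrm{rank}\,\widehat{HFK}(Y,K,\mathfrak s)\le 2\deg+1$. But an incoherent diagram need only violate the per-grading bound somewhere, and when the Alexander polynomial has gaps that need not push $|\alpha\cap\beta|$ above $\sum_{\mathfrak s}(2\deg+1)$. The appeal to the mapping cone exceeding $|H_1|$ merely restates the conclusion.

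What is needed is a local obstruction. For a reduced diagram $\mathrm{rank}\,\widehat{HFK}=|\alpha\cap\beta|$, while an L-space knot has rank at most one in every relative $\mathrm{Spin}^c$ structure. So it would suffice to prove a specific combinatorial lemma, for example that incoherence forces two intersection points into the same relative $\mathrm{Spin}^c$ structure, or else to exhibit some other explicit grading inconsistency. That lemma is the real content of this direction, and the proposal does not supply it.
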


In this paper, we extend the above result to $(1,1)$-knots in $S^1\times S^2$. 

\begin{thm}
    A reduced $(1,1)$-diagram in $S^1\times S^2$ represents a knot admitting L-space surgeries if and only if it is coherent, with the only exception being the local unknot in $S^1\times S^2$, whose reduced $(1,1)$-diagram has no bigons.
    \label{thm:S1S2}
\end{thm}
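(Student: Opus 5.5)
We plan to reduce to the L-space knot criterion for knots in lens spaces, Theorem~\ref{thm:GLV}, via a surgery that turns $S^1\times S^2$ into a lens space while preserving the $(1,1)$ structure. The ambient $S^1\times S^2$ has genus-one Heegaard diagram with $\alpha$ and $\beta$ parallel (homologous) on $\Sigma$. Since $\beta$ is isotopic to $\alpha$ on the unpointed torus, it is natural to fix a dual curve $\mu$ to $\alpha$ and regard the knot complement $S^1\times S^2\setminus K$ through the sutured/knot Floer setup of Rasmussen's framework~\cite{rasmussen2017floer}. For a knot $K\subset S^1\times S^2$ which is null-homologous (winding number zero) the notion of L-space surgery is special: $S^1\times S^2$ is not an L-space, so we need a slope $p/q$ with $S^1\times S^2_{p/q}(K)$ an L-space. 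If $K$ has nonzero winding number $w$, surgeries give rational homology spheres and the $(1,1)$ diagram can be turned into a $(1,1)$ diagram of the core knot in the surgered manifold.

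The key steps, in order, are as follows. \emph{Step 1:} Show that if $K$ is a $(1,1)$-knot in $S^1\times S^2$ that is not the local unknot, then after a suitable Dehn surgery $Y'=S^1\times S^2_{\gamma}(K)$ with $Y'$ a rational homology sphere, the dual knot $K'$ is again a $(1,1)$-knot in $Y'$. Here $Y'$ is a lens space exactly when... but more cleanly: use the standard fact that the set of L-space slopes is an interval and that $K$ admits an L-space surgery iff its complement is Floer simple. Floer simplicity is a property of the complement, and is detected by any filling that is a rational homology sphere: $K$ is Floer simple iff $\widehat{HFK}$ of the core of some (equivalently every) rational homology sphere filling has rank equal to $|H_1|$ and that filling is an L-space. \emph{Step 2:} Choose the filling cleverly: the meridian $\mu_K$ gives $S^1\times S^2$ (bad), so instead observe that the $(1,1)$-diagram for $K$ also determines a $(1,1)$-diagram for the complement; changing only the $\alpha$ curve by Dehn twists along a curve disjoint from the basepoint arc does not change the complement's doubly-pointed Floer complex structure. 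Concretely, the complement of $K$ is unchanged when we regard $(\Sigma,\alpha,\beta,z,w)$ as describing $K$; the chain complex $\widehat{CFK}$ is generated by $\alpha\cap\beta$ with differentials counting bigons (Goda–Matsuda–Morifuji). \emph{Step 3:} Apply the Greene–Lewallen–Vafaee argument directly: their proof characterizes Floer simplicity of the complement via the property that $\widehat{HF}$ of every rational-sphere filling is minimal, which they compute combinatorially through coherence of rainbow arcs. We rerun it: coherent diagrams give $\widehat{HFK}$ with generators each supported in distinct Spin$^c$ structures relative to the twisted grading, forcing Floer simplicity of the complement, hence some (in fact an interval of) L-space surgeries. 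Conversely non-coherence produces two generators in the same relative Spin$^c$ structure connected by a bigon pattern that survives, obstructing Floer simplicity.

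The main obstacle is the degenerate homological situation: for $S^1\times S^2$ the Spin$^c$ structures are not finite, and when $K$ is null-homologous $\widehat{HFK}$ lives in $\mathrm{Spin}^c(S^1\times S^2)$ with infinite $H_1$, so the rank-counting criterion must be replaced by a twisted/relative Spin$^c$ count using $H_1(S^1\times S^2\setminus K)$. I would handle this by working with Rasmussen's Floer simplicity criterion: the complement $M$ has $H_1(M)$ of rank one, and $M$ is Floer simple iff $\widehat{HFK}$ is supported with rank one in each relative Spin$^c$ class of some rational filling. The exception arises here: for the local unknot the diagram has no bigons, vacuously coherent, yet the complement is $S^1\times D^2\# S^1\times S^2$, whose fillings all contain an $S^1\times S^2$ summand and are never L-spaces; so we must separately verify that a bigonless reduced diagram in $S^1\times S^2$ is precisely the local unknot, and that every other coherent diagram has $H_1(M)$ with $b_1(M)=1$ so a rational filling exists.
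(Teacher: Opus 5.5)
There is a genuine gap: nothing in your argument connects the $(1,1)$-diagram to an L-space filling.

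The diagram computes $\widehat{HFK}(S^1\times S^2,K)$, i.e.\ the sutured Floer homology of the exterior $M$ with sutures of slope $\mu$. When $K$ has winding number $w\neq 0$, the sphere $\{\star\}\times S^2$ gives $w[\mu]=0$ in $H_1(M)$. So $\mu$ is exactly the rational longitude of $M$, the one filling that is \emph{not} a rational homology sphere. The Rasmussen--Rasmussen criterion you quote concerns the core of an L-space filling, which is a different slope, and you give no way to reach it.

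\begin{itemize}
\item Step~1 cannot hold as stated. A manifold containing a $(1,1)$-knot has Heegaard genus at most one, so a $(1,1)$ dual knot means a surgery to $S^3$ or a lens space. That is an L-space surgery, which contradicts the ``only if'' direction for the incoherent diagrams you must rule out.
\item Step~2 does not produce a filling of $M$. Dehn twisting $\alpha$ along a curve of $\Sigma$ amounts to surgery on a curve in the exterior, and in general it changes $M$.
\item The Floer-theoretic characterization of L-space knots used in \cite{greene2018space} is for knots in L-spaces. Your $K$ lies in $S^1\times S^2$, which is not an L-space, and $K$ is not rationally null-homologous, so the surgery formula behind that characterization does not apply. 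This is exactly why the paper says those techniques do not directly generalize.
\item Step~3 is therefore an assertion, not an argument. For a reduced diagram the differential on $\widehat{CFK}$ vanishes (its rank is $|\alpha\cap\beta|$), so there is no ``bigon pattern that survives'' to obstruct anything. Nor is ``rank one in each relative $\mathrm{Spin}^c$ class implies Floer simple'' what Greene--Lewallen--Vafaee prove, and it is not available here.
\item Your ``equivalently every'' is also false: the right-handed trefoil exterior is Floer simple yet has non-L-space fillings.
\end{itemize}

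The exceptional case is also mishandled. A bigonless reduced diagram in $S^1\times S^2$ consists of two disjoint parallel curves. It represents the local unknot or the core $S^1\times\{\star\}$, according to whether $z,w$ lie in the same complementary annulus. The core has solid-torus exterior and \emph{is} an L-space knot, so you cannot verify that bigonless diagrams are exactly the local unknot.

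You also need, and do not prove, that every other coherent diagram gives a homologically essential knot. If $K$ is null-homologous, every surgery has $b_1\geq 1$, so (contrary to your opening paragraph) no slope can give an L-space.

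The paper avoids Floer homology of $K$ altogether.
\begin{itemize}
\item For coherent diagrams it uses the 3-geodesics description (Proposition~\ref{prop:geodesics}) to show $K$ is a spherical braid (Corollary~\ref{cor:braid}), which in particular gives homological essentiality. Perturbing an endpoint then yields $(1,1)$ L-space knots $\overline{\Delta_g^nB}\subset S^3$, and Proposition~\ref{prop:7.13} shows the exterior is a generalized solid torus.
\item For incoherent diagrams it proves persistent foliarity with modified Heegaard branched surfaces (Proposition~\ref{prop:incoherent}). This uses inputs specific to $S^1\times S^2$: diagrams with no vertical arcs, primitive diagrams handled via rational tangles, and the absence of meridional disks and non-separating spheres in the exterior. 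Taut foliations then exclude L-space fillings.
\end{itemize}
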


Although the knot Floer homology techniques used in~\cite{greene2018space} do not directly generalize to knots in $S^1\times S^2$, the diagrammatic criterion itself does not essentially rely on the ambient space being a rational homology sphere. Instead of generalizing those Floer homology techniques, we extend the methods developed in \cite{nie2021explicit,lyu2024knot,lyu2025persistent} to $(1,1)$-knots in $S^1\times S^2$.

$(1,1)$-knots in $S^1\times S^2$ arise naturally in the study of link surgeries in $S^3$. Examples include the links considered in~\cite{clay2025order,santoro2026space}. As an application of our result, we discuss L-space surgeries on $2$-bridge links and recover some results in~\cite{santoro2026space}.

In Section~\ref{sec:coherent} we show that a $(1,1)$-knot in $S^1\times S^2$ represented by a coherent, reduced $(1,1)$-diagram admits non-trivial L-space surgeries, except for the local unknot. In Section~\ref{sec:incoherent} we show that a $(1,1)$-knot in $S^1\times S^2$ represented by an incoherent, reduced $(1,1)$-diagram is persistently foliar, thus cannot admit L-space surgeries. Theorem~\ref{thm:S1S2} is proved at the end of Section~\ref{sec:incoherent}. In Section~\ref{sec:2bridge} we discuss applications to 2-bridge links.

\subsubsection*{Acknowledgements} We thank Jacob Rasmussen and Josh Greene for helpful conversations, and Yi Ni for suggesting the problem.  

\section{Coherent diagrams give L-space knots}
\label{sec:coherent}

In the standard terminology, an L-space is a rational homology sphere with minimal Heegaard Floer homology, and an L-space knot is a knot in an L-space that admits a nontrivial L-space surgery. We extend the definition of L-space knots as follows:

\begin{defn}
    Let $Y$ be a closed, oriented, and connected $3$-manifold. A knot $K\subset Y$ is called an L-space knot if $Y-K$ is Floer simple in the sense of~\cite{rasmussen2017floer}, that is, if it admits multiple Dehn fillings to L-spaces.
    \label{defn:L-sp_knot}
\end{defn}

\begin{rem}
Under this definition, being an L-space knot depends only on the knot complement and is therefore preserved under surgery duality.
\end{rem}

A special property of L-space knots in $S^1\times S^2$ is proved in~\cite{rasmussen2017floer}:

\begin{prop}[\cite{rasmussen2017floer}, Proposition 7.8]
    Let $Z$ be a rational homology sphere. Suppose $K\subset Z\# (S^1\times S^2)$ has an L-space surgery. Then the complement of $K$ is a ``generalized solid torus''. In particular, all non-longitudinal fillings of the knot complement yield L-spaces.
    \label{prop:g_torus}
\end{prop}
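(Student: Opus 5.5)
The idea is to use Rasmussen's machinery for Floer simple manifolds~\cite{rasmussen2017floer}. For a compact, oriented 3-manifold $M$ with torus boundary, let $\mathcal{L}(M)$ be the set of slopes whose filling is an L-space; when $M$ is Floer simple this set is the interior of an interval determined by $M$. Let $M=Z\#(S^1\times S^2)-K$. The longitude $l$ of $M$ is the unique slope that is rationally null-homologous in $M$. Since $H_1(Z\#(S^1\times S^2))$ has rank one, the long exact sequence of the pair $(Z\#(S^1\times S^2),K)$ shows that $H_1(M;\mathbb{Q})\cong\mathbb{Q}$. Also, $M(l)$ has $b_1=1$, so $l\notin\mathcal{L}(M)$.

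By hypothesis some filling $M(\mu)$ is an L-space, so $M(\mu)$ is a rational homology sphere and $\mu\neq l$. The meridian filling $M(\mu_K)=Z\#(S^1\times S^2)$ is not a rational homology sphere, so $\mu_K$ must be the longitude, $\mu_K=l$. Dually, consider the core $K'$ of the filling in $M(\mu)$. The Alexander-type invariant of $M$ detects whether $\mathcal{L}(M)$ is large. The key computation is the Turaev torsion (equivalently, the Alexander polynomial) of $M$, because the meridian filling recovers a manifold with $b_1=1$. Specifically, $\tau(M)\cdot(\text{torsion of }\mu_K\text{-filling correction})=\tau(Z\#S^1\times S^2)$. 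The torsion of $Z\#(S^1\times S^2)$ is $|H_1(Z)|/(t-1)^2$ up to units. From the surgery formula $\tau(M(\mu_K))=\tau(M)\cdot(t^{[\mu_K]}-1)$ one gets $\tau(M)\doteq |H_1(Z)|/(t-1)$, which is the torsion of $Z\#(S^1\times D^2)$. I would then apply Rasmussen's criterion, which characterizes generalized solid tori as the Floer simple $M$ whose torsion has this form, namely with $\tau(M)(t-1)$ constant. Alternatively, I would use the characterization that $\mathcal{L}(M)$ equals all slopes except $l$ exactly when $\chi$ of the relevant spin$^c$ structures is positive, i.e.\ the $D^\tau$ set is empty.

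The remaining step is to show that $M$ is Floer simple. Knowing only that one filling $M(\mu)$ is an L-space, I would use the surgery exact triangle for the slopes $\mu$, $l$ and $\mu+l$ (after choosing a basis with $\mu\cdot l$ suitably normalized, possibly iterating Rasmussen's lemma about rank additivity along Farey triangles). Taking ranks gives $\operatorname{rank}\widehat{HF}(M(\mu+kl))\le \operatorname{rank}\widehat{HF}(M(\mu+(k-1)l))+\ldots$, and the total-rank estimate against $|H_1|$, which grows linearly in $k$, forces these fillings to be L-spaces. Here one uses that $\widehat{HF}(M(l))$ with twisted/untwisted coefficients is controlled by $\widehat{HF}(Z\#S^1\times S^2)$ only through the torsion computed above. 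This gives two distinct L-space fillings, so $M$ is Floer simple. Rasmussen's classification then identifies $\mathcal{L}(M)$ as the interval $\mathbb{P}^1\setminus\{l\}$, which is exactly the generalized solid torus condition and gives the "in particular" clause.

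I expect the main obstacle to be this final step: promoting a single L-space filling to Floer simplicity, and making the torsion/$D^\tau$ computation rigorous in the non-$\mathbb{Q}HS^3$ setting. The difficulty is that the exact triangle only bounds ranks from above when the third term is controlled, and here the third term $Z\#(S^1\times S^2)$ has $\widehat{HF}$ of rank $2|H_1(Z)|$. I would first try the exact triangle with $\mu$, $\mu+l$ and $l$, and check whether $\operatorname{rank}\widehat{HF}(M(\mu+l))\le|H_1(M(\mu))|+\ldots$ matches $|H_1(M(\mu+l))|=|H_1(M(\mu))|+|H_1(Z)|\cdot(\text{stuff})$ exactly.
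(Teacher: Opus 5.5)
The paper does not prove this statement. It quotes it from Rasmussen--Rasmussen~\cite{rasmussen2017floer} and uses it as a black box, so your argument has to stand on its own. Identifying $\mu_K$ as the rational longitude $l$ and noting $l\notin\mathcal{L}(M)$ are correct. Both load-bearing steps, however, fail as written.

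\textbf{The Floer simplicity step rests on a false premise.} Since $l$ is the rational longitude, $|H_1(M(\alpha))|$ is a fixed multiple of $\Delta(\alpha,l)$. So $|H_1(M(\mu+kl))|=|H_1(M(\mu))|$ for every $k$, and nothing grows linearly.

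With untwisted coefficients, the triangle for $(\mu+(k-1)l,\,l,\,\mu+kl)$ only gives $\operatorname{rank}\widehat{HF}(M(\mu+kl))\le\operatorname{rank}\widehat{HF}(M(\mu+(k-1)l))+\operatorname{rank}\widehat{HF}(Z\#(S^1\times S^2))$. That bound can never force an L-space. Moreover, these three slopes form a triad only if $\Delta(\mu,l)=1$. That is a property of the given L-space slope, not something a choice of basis can arrange.

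The missing idea is twisted coefficients. $M(l)=Z\#(S^1\times S^2)$ contains a non-separating sphere, so its totally twisted $\widehat{HF}$ vanishes after tensoring with $\mathbb{F}(t)$, for any rational homology sphere $Z$. The twisted triangle then gives $\operatorname{rank}\widehat{HF}(M(\mu+kl))=\operatorname{rank}\widehat{HF}(M(\mu+(k-1)l))$, so every $M(\mu+kl)$ is an L-space. Repeated Farey mediants then reach every slope other than $l$. For mediants $|H_1|$ is additive, so the untwisted triangle suffices. This gives $\mathcal{L}(M)=\mathbb{P}^1\setminus\{l\}$ directly, with no torsion computation. Even so, this only covers an integral L-space surgery on $K$, i.e.\ a slope at distance one from $l=\mu_K$. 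A non-integral one needs further input, e.g.\ through the knot Floer homology of the core of the filling.

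\textbf{The torsion step is miscomputed and aimed at the wrong criterion.} In the gluing formula the filling solid torus contributes $([c]-1)^{-1}$, where $c$ is its \emph{core}. For the $\mu_K$-filling the core is $K$. By contrast $\mu_K$ is null-homologous in $M(\mu_K)$, so your factor $t^{[\mu_K]}-1$ is zero.

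If $K$ meets the non-separating sphere algebraically $n$ times, then $[K]=t^n$ modulo torsion. On the free part this gives $\tau(M)\doteq|H_1(Z)|\,(t^n-1)/(t-1)^2$, which has your form only for $n=\pm1$. The spherical braids of Corollary~\ref{cor:braid}, the very knots the paper feeds into this proposition, generally have $n\ge2$.

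Here is a concrete counterexample to your criterion. Take $\sigma=\sigma_1\cdots\sigma_{n-1}$. The closures $\overline{\Delta_n^k\sigma}$ are the torus knots $T(n,kn+1)$, so Proposition~\ref{prop:7.13} makes the complement of $\tilde\sigma$ a generalized solid torus. For $n=2$ this complement is the twisted $I$-bundle over the Klein bottle. Yet $\tau(M)(t-1)\doteq1+t+\cdots+t^{n-1}$, which is not constant, so ``$\tau(M)(t-1)$ constant'' is not the characterization.

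Moreover, $\mu_K$ has order exactly $|n|$ in $H_1(M)$ and dies in $H_1(M(\mu_K))$. Hence the full torsion of $M$ in $\mathbb{Z}[H_1(M)]$ is not determined by $\tau(Z\#(S^1\times S^2))$ at all. That full torsion is what the $D^\tau$ criterion actually depends on, so the torsion route cannot be completed from the data you use.

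A minor point: $H_1(M;\mathbb{Q})\cong\mathbb{Q}$ does not follow from the long exact sequence alone, since a rationally null-homologous $K$ has $b_1(M)=2$. It follows from the existence of a rational homology sphere filling.
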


In this section, we prove the following proposition:

\begin{prop}
    Let $K\subset S^1\times S^2$ be a $(1,1)$-knot. If $K$ is represented by a reduced, coherent diagram, and $K$ is not the local unknot, then $K$ is an L-space knot. In fact, every non-trivial surgery along $K$ yields an L-space.
    \label{prop:coherent}
\end{prop}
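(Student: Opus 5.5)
Since $S^1\times S^2$ is not a rational homology sphere, the natural route is to find one L-space surgery on $K$ and let Proposition~\ref{prop:g_torus} supply the rest. Indeed, by that proposition (with $Z=S^3$), once $K$ has a single L-space surgery, its complement is a generalized solid torus and every non-longitudinal filling is an L-space. The longitude here is the trivial surgery slope, because $K$ is homologically essential or null-homologous in a specific way. So I would prove the proposition in two steps: first exhibit one L-space filling of $S^1\times S^2 - K$ whose slope is not the longitude, and then invoke Proposition~\ref{prop:g_torus}.

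\textbf{Normal form.} In a genus-one diagram of $S^1\times S^2$, the curves $\alpha$ and $\beta$ are isotopic on $\Sigma$ once the basepoints are forgotten. In the standard form of Figure~\ref{fig:reduced11}, the vertical arcs therefore contribute algebraic intersection $0$. Coherence says that, with suitable orientations, all rainbow arcs are oriented coherently. Since the algebraic intersection is zero, the vertical arcs must cancel in pairs, and this should force a very rigid picture. I would classify the possibilities: the diagram is determined by a few integer parameters (numbers of rainbow arcs on each side and a twisting of the vertical strands), much like the parametrization of coherent diagrams in \cite{greene2018space}. The case with no bigons, i.e.\ $\alpha$ disjoint from $\beta$ after reduction, is exactly the local unknot, and it is excluded by hypothesis.

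\textbf{Finding one L-space surgery.} For the remaining diagrams, I would pick an explicit surgery and show it is an L-space. There are two options. The first is to realize the surgered manifold as a manifold with a genus-one-like description, namely a $(1,1)$-diagram of the dual knot in a lens space. One can then apply Theorem~\ref{thm:GLV}, after checking that the dual diagram is still reduced and coherent. The second option is to follow the approach the paper announces, via \cite{nie2021explicit,lyu2024knot}: compute $\widehat{HF}$ of a large integer surgery directly from the $(1,1)$-diagram. Coherence means that every bigon's boundary is oriented consistently, so the differentials in the doubly pointed complex are staircase-like. The filtered complex then has the shape of an L-space knot complex (each generator in a distinct Alexander grading, rank one in each relevant spin$^c$ structure). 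Consequently, the large surgery has $\widehat{HF}$ of rank equal to $|H_1|$.

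\textbf{Main obstacle.} I expect the hard step to be this middle computation: making sense of the mapping cone or large-surgery formula for a knot in $S^1\times S^2$, where $\widehat{HF}(S^1\times S^2)$ has rank $2$ and the knot may be homologically essential. I would first try the duality route. Choose a slope $\mu'$ such that filling gives a lens space $L(p,q)$ in which the core $K'$ is a $(1,1)$-knot whose diagram is obtained from ours by changing $\beta$ by Dehn twists. The twists act on the vertical arcs only, so they preserve the rainbow arcs and hence coherence. Theorem~\ref{thm:GLV} then shows that $K'$ is an L-space knot in $L(p,q)$. The complement of $K'$ is the complement of $K$, and it has an L-space filling $L(p,q)$. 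Proposition~\ref{prop:g_torus} then finishes the proof, provided we also check that the slope giving $L(p,q)$ is not the trivial one and that $p\neq 0$.
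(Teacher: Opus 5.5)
\textbf{Gap: no L-space filling is ever produced.} Your reduction to finding a single L-space filling is fine. Such a filling forces $K$ to be homologically essential, since a null-homologous knot in $S^1\times S^2$ has no rational homology sphere fillings. For a homologically essential knot the meridian is the rational longitude, so Proposition~\ref{prop:g_torus} then gives every non-trivial surgery. The problem is that neither of your options actually produces that first filling.

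In the duality route, replacing $\beta$ by $\tau_c(\beta)$ for a curve $c\subset\Sigma-\{z,w\}$ is a $\pm1$-surgery on a push-off of $c$. That is a surgery on a curve \emph{inside} the knot complement, and in general it changes the complement rather than refilling $S^1\times S^2-K$.
\begin{itemize}
\item Twists along curves disjoint from and parallel to $\alpha$ keep $\alpha\cdot\beta=0$ and in fact return the same pair $(S^1\times S^2,K)$. Such a curve bounds a disk in $U_0$ meeting $K$ at most once, so the twist is a blow-up that does not change the knot.
\item To reach $L(p,q)$ you need $c\cdot\alpha\neq 0$. Then the new pair comes from surgery on a curve in the complement of $K$, and nothing in your argument identifies it as a surgery dual of $K$.
\item Underneath all this is the claim that every coherent $K$ has a lens space surgery whose dual is a $(1,1)$-knot. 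That is far stronger than the proposition and is unsupported.
\item If you had such a filling, Theorem~\ref{thm:GLV} would be superfluous: $L(p,q)$ is itself the required L-space.
\end{itemize}
The Floer route stops exactly at the hard point. There is no ready-made large-surgery or mapping-cone formula for a homologically essential knot in $S^1\times S^2$ that turns a staircase-shaped $(1,1)$ complex into $\mathrm{rank}\,\widehat{HF}=|H_1|$. The paper notes that the Floer techniques of \cite{greene2018space} do not directly generalize here.

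\textbf{The missing idea.} You never need to exhibit an L-space filling by hand. The paper uses the 3-geodesics description (Proposition~\ref{prop:geodesics}). In $S^1\times S^2$ the two meridional disks containing $\tau_0,\tau_1$ glue to a non-separating sphere. Unless $\rho$ has the same slope as them (the local unknot), $K$ is transverse to the $S^2$-fibers, i.e.\ a spherical braid $\tilde B$. Perturbing one endpoint of $\tau_0$ along $\rho$ gives coherent $(1,1)$-knots in $S^3$ that are the ordinary closures $\overline{\Delta_g^nB}$ for all large $n$. These are L-space knots by~\cite{nie2021explicit}. The braid criterion of \cite{rasmussen2017floer} (Proposition~\ref{prop:7.13}) then shows directly that the complement of $\tilde B=K$ is a generalized solid torus.

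A smaller point: a bigon-free reduced diagram in $S^1\times S^2$ represents either the local unknot \emph{or} the core $S^1\times\{\mathrm{pt}\}$, depending on whether $z,w$ lie in the same annulus. The core is vacuously coherent, so it also falls under the proposition; the case is trivial because its complement is a solid torus.
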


\subsection{The 3-geodesics description}

Knots with L-space surgeries in $S^1\times S^2$ have been studied, for example, in~\cite{baker2016some,rasmussen2017floer,ni2019null}. In particular, we know from~\cite{ni2019null} that an L-space knot in $S^1\times S^2$ must be a spherical braid. It is therefore reasonable to first check that coherent diagrams in $S^1\times S^2$ give spherical braids. To that end, we apply the 3-geodesics description for $(1,1)$-knots with coherent diagrams, which was introduced in~\cite{nie2021explicit}:

\begin{prop}[cf.~\cite{nie2021explicit}, Theorem 1]
    Let $Y=U_0\cup_{\Sigma}U_1$ be a genus-one Heegaard splitting. Endow the Heegaard torus $\Sigma$ with the standard Euclidean geometry. A $(1,1)$-knot $K\subset Y$ is represented by a coherent $(1,1)$-diagram if and only if it is isotopic to a union of 3 arcs $K=\rho\cup\tau_0\cup\tau_1$, such that
    \begin{enumerate}[(1)]
        \item $\rho$ is a geodesic of $\Sigma$;
        \item $\tau_0$ is properly embedded in some meridional disk of $U_0$, whose boundary is a geodesic of $\Sigma$;
        \item $\tau_1$ is properly embedded in some meridional disk of $U_1$, whose boundary is a geodesic of $\Sigma$.
    \end{enumerate}
    \label{prop:geodesics}
\end{prop}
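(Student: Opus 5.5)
The plan is to follow the strategy of~\cite{nie2021explicit}: translate the coherence condition, which is combinatorial, into a straight-line picture on the flat torus, and then read off the three arcs. The proof is essentially the one in~\cite{nie2021explicit}. The main point to check is that nothing there used that $Y$ is a rational homology sphere, that is, that $\alpha$ and $\beta$ have nonzero algebraic intersection. In $S^1\times S^2$ the curves $\alpha,\beta$ are isotopic, so $\alpha\cdot\beta=0$; the argument has to be robust to this. For that reason I would redo the key steps directly rather than cite the theorem.

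\emph{The direction ``$\Leftarrow$''.} Given $K=\rho\cup\tau_0\cup\tau_1$, put the two basepoints $z,w$ at the endpoints of the geodesic $\rho$. Take $\alpha$ to be the boundary of the meridional disk containing $\tau_0$, pushed off $\tau_0$'s endpoints by isotopy on the two-pointed torus, and similarly for $\beta$ with $\tau_1$. Concretely:
\begin{enumerate}[(1)]
\item Slide the disk containing $\tau_0$ so that its boundary becomes a geodesic $\alpha_0$ through $z$ and $w$. The arc $\tau_0$ is then isotopic, relative to its endpoints, to a subarc $\sigma_0\subset\alpha_0$.
\item The resulting $\alpha$-curve is obtained by taking the geodesic parallel to $\alpha_0$ and finger-moving it around the geodesic segment $\rho$.
\item Do the same for $\beta$.
\end{enumerate}
The diagram is then determined by two segments, $\rho$ and $\sigma_0$ (respectively $\sigma_1$), on a flat torus. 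I would verify coherence by orienting $\alpha$ and $\beta$ along their geodesic directions. Every bigon arises from the finger moves near $\rho$, and the straightness of all pieces forces consistent turning, so the boundary orientations agree. After reducing, coherence is preserved, because removing bigons without basepoints does not affect the orientation criterion.

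\emph{The direction ``$\Rightarrow$''.} This is the main obstacle. Start from the standard form in Figure~\ref{fig:reduced11}, and use the remark that coherence means all rainbow arcs are oriented the same way. I would parametrize the reduced coherent diagram by integer data: the number of vertical arcs, the number of rainbow arcs on each side, and the twist. I would then show that $\beta$ is isotopic on the punctured torus $\Sigma-\{z,w\}$ to the result of dragging a straight curve along a straight segment between $z$ and $w$. The idea is to straighten $\beta$ in the universal cover $\mathbb{R}^2$ minus the lattice of lifts of $z,w$. Coherence guarantees that the lifts of $\beta$ turn monotonically around the punctures. Hence each lift of $\beta$ can be isotoped, without crossing punctures, to a piecewise-geodesic curve that bends only at lifts of a single segment. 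That segment is $\rho$ (up to the symmetry exchanging the roles of the handlebodies), and the straight part determines the disk boundary for $\tau_1$. Applying the same argument to $\alpha$, by symmetry with a geodesic $\alpha$ in standard form, gives $\tau_0$.

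The case $\alpha\cdot\beta=0$ needs separate attention. Here there are no vertical arcs in the usual sense, or rather the vertical and rainbow structure degenerates to parallel curves plus rainbows. The straightening then reduces to a one-dimensional problem in an annular cover, which is easier.

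Finally I would check that the segment $\rho$ found in the ``$\Rightarrow$'' step is the same for $\alpha$ and for $\beta$. This follows because both finger moves encircle the same pair of basepoints $z,w$ along the same isotopy class of arc in $\Sigma-\{z,w\}$, and every arc class has a geodesic representative. This gives the decomposition $K=\rho\cup\tau_0\cup\tau_1$ claimed in Proposition~\ref{prop:geodesics}.
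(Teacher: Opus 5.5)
\textbf{There is a genuine gap: the proposal misreads the configuration in the ``$\Leftarrow$'' direction and only asserts the ``$\Rightarrow$'' direction.}

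\emph{The ``$\Leftarrow$'' direction.} Since $K=\rho\cup\tau_0\cup\tau_1$ is a simple closed curve, the three arcs are joined cyclically. The arc $\tau_0$ runs from $X$ to $Y$, $\tau_1$ from $Y$ to $Z$, and $\rho$ from $Z$ to $X$, with $X,Y\in\partial D_0$ and $Y,Z\in\partial D_1$. This is the labelling of Figure~\ref{fig:3_geod}; in the proof of Proposition~\ref{prop:coherent} one perturbs $X$ and obtains $\tau_0'=X'Y$.

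However you push $\rho$ off $\Sigma$, the point $Y=\tau_0\cap\tau_1$ stays in $K\cap\Sigma$. So the basepoints are never the two ends of $\rho$, and $\partial D_0$ does not pass through both ends of $\rho$. If you push $\rho$ into $U_1$, the basepoints are $X$ and $Y$. Then $\alpha$ can be taken to be a parallel copy of $\partial D_0$, since a parallel copy of $D_0$ misses $\tau_0$. This $\alpha$ is a geodesic with no finger moves at all. Only $\beta$, a parallel copy of $\partial D_1$, acquires a stack of fingers along $\rho$ around $X$. So your steps (1)--(3), and your final check that ``the segment $\rho$ is the same for $\alpha$ and for $\beta$'', concern a configuration that does not occur.

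Two further points in this direction are unsupported:
\begin{enumerate}[(a)]
\item The coherence check is only a slogan. What you need is that two geodesics on the flat torus cross with constant sign. Then all fingers leave the straight part of $\beta$ turning the same way, and all finger tips pass around $X$ the same way. You must still compare the rainbow arcs created at the bases of the fingers with those created at the tips.
\item The claim that removing bigons without basepoints does not affect the orientation criterion is unjustified. Cancelling an empty bigon deletes two corners from each of the two regions across its edges. So a neighbouring quadrilateral can become a new bigon whose boundary orientation was never constrained. You must either show the constructed diagram is already reduced or track orientations through the reduction.
\end{enumerate}

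\emph{The ``$\Rightarrow$'' direction.} This is the real content, and your sketch asserts it rather than proves it. The sentence ``coherence guarantees that the lifts of $\beta$ turn monotonically; hence each lift can be isotoped to a piecewise-geodesic curve that bends only at lifts of a single segment'' restates the conclusion. Nothing in it explains:
\begin{enumerate}[(a)]
\item why the bending concentrates along one segment;
\item why that segment ends at a basepoint;
\item why the unbent part is parallel to the boundary of a meridian disk;
\item why the two basepoints and the initial point of the bending segment can be placed on geodesics parallel to $\alpha$ and to the unbent part of $\beta$ in the required pattern $X,Y\in\partial D_0$, $Y,Z\in\partial D_1$.
\end{enumerate}
The positions of $z,w$ relative to $\alpha\cup\beta$ are extra combinatorial data that straightening $\beta$ alone does not control. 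What is missing is an actual classification of coherent reduced diagrams, matched against the 3-geodesic configurations. For instance, you could work with the standard-form data: the numbers of rainbow and vertical arcs, and the twist.

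Your treatment of the case $\alpha\cdot\beta=0$ is also inaccurate. In $S^1\times S^2$ a reduced diagram can have vertical arcs; they then come in equal numbers with opposite orientations. By Subsection~\ref{subsec:diadecomp}, only the primitive diagrams have none.

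For comparison, the paper gives no independent proof. It imports Theorem~1 of~\cite{nie2021explicit}, on the grounds that the argument there is diagrammatic on the Heegaard torus and never uses that $Y$ is a rational homology sphere. You identified exactly the right point to check. As written, however, the proposal establishes neither direction.
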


\begin{cor}
    Let $K\subset S^1\times S^2$ be a $(1,1)$-knot represented by a coherent $(1,1)$-diagram. If $K$ is not the local unknot, then it is a spherical braid in $S^1\times S^2$.
    \label{cor:braid}
\end{cor}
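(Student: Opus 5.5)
The plan is to use the 3-geodesics description (Proposition~\ref{prop:geodesics}) to put $K$ in a position where the $S^1$-coordinate of $S^1\times S^2$ is visibly monotone along $K$, after one small tilt. Write $S^1\times S^2=U_0\cup_\Sigma U_1$ as the genus-one splitting in which the meridians of $U_0$ and $U_1$ have the same slope on $\Sigma$. Identify $\Sigma$ with the Euclidean torus $\mathbb{R}^2/\mathbb{Z}^2$ so that this common meridian slope is horizontal, and let $y\in\mathbb{R}/\mathbb{Z}$ be the vertical coordinate. Every geodesic bounding a meridional disk of $U_i$ has meridian slope, so it is a horizontal circle $\{y=t\}$. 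We can choose product structures $U_i\cong D^2\times S^1$ whose meridional disks $D_i(t)$ have boundary $\{y=t\}$ and fit together into a fibration. Then $S_t=D_0(t)\cup D_1(t)$ are exactly the level spheres $\{t\}\times S^2$, and $y$ extends to the projection $S^1\times S^2\to S^1$. The first step is to set this up carefully, checking that the disks containing $\tau_0,\tau_1$ in Proposition~\ref{prop:geodesics} may be taken to be among the $D_i(t)$; any two meridional disks with the same geodesic boundary are isotopic rel boundary.

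Next I record the combinatorics of $K=\rho\cup\tau_0\cup\tau_1$. Label the endpoints so that $\rho$ runs from $p$ to $q$, $\tau_0\subset D_0(a)$ runs from $q$ to $r$, and $\tau_1\subset D_1(b)$ runs from $r$ to $p$. The point $r$ lies on both $\partial D_0(a)$ and $\partial D_1(b)$, so $a=b$. Hence $p,q,r$ all lie on the circle $\{y=a\}$, and $\tau_0\cup\tau_1$ is an embedded arc $\sigma$ from $q$ to $p$ in the level sphere $S_a$.

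Since $\rho$ is a geodesic arc with both ends at height $a$, there are two cases.
\begin{enumerate}[(i)]
\item If $\rho$ is horizontal (or degenerate), then $K$ lies in $S_a$, or in an arbitrarily thin product neighbourhood of it. A simple closed curve in $S^2$ bounds a disk, so $K$ is the local unknot, which is excluded by hypothesis.
\item Otherwise $y$ is strictly monotone along $\rho$, and $\rho$ wraps $n\neq 0$ times in the $S^1$-direction from $p$ back to height $a$ at $q$.
\end{enumerate}

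In case (ii), the only failure of transversality to the fibers $S_t$ is along $\sigma$, which lies in a single fiber. I fix this with a local isotopy supported in a product neighbourhood $S_a\times[a-\varepsilon,a+\varepsilon]$. First trim $\rho$ so it runs from height $a+\varepsilon$ to height $a-\varepsilon+n$; this is still monotone. Then replace $\sigma$ by its ``graph'' $\{(x,\,h(x)) : x\in\sigma\}$, where $h$ increases monotonically from $a-\varepsilon$ at $q$ to $a+\varepsilon$ at $p$, with the sign chosen to match the direction of $\rho$. This graph is embedded because $\sigma$ is, and the straight-line homotopy in the $[a-\varepsilon,a+\varepsilon]$ factor shows it is isotopic to the original configuration. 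The result has $y$ strictly monotone along all of $K$, so $K$ is transverse to every $\{t\}\times S^2$, i.e.\ a spherical braid (of index $|n|$).

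The main obstacle I expect is the first step: justifying that the meridional disks provided by Proposition~\ref{prop:geodesics} can be simultaneously arranged as fibers of a single $S^2$-fibration compatible with the Euclidean structure. The matching of heights, the local-unknot case, and the tilting isotopy are then routine.
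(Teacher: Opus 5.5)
Your proposal is correct and follows the same route as the paper. The two meridional disks share the boundary geodesic through the common endpoint of $\tau_0$ and $\tau_1$, so they patch to a level sphere $\{\star\}\times S^2$. A horizontal $\rho$ puts $K$ inside that sphere, giving the local unknot; otherwise $\rho$ is transverse to the fibers, and you tilt $\tau_0\cup\tau_1$ off the sphere to make $K$ transverse everywhere. Your write-up makes explicit what the paper leaves implicit: building the global fibration, the height matching $a=b$, and the graph isotopy, where the only remaining care is splicing the tilted arc onto the trimmed ends of $\rho$.
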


\begin{figure}[!hbt]
    \begin{overpic}[scale=0.5]{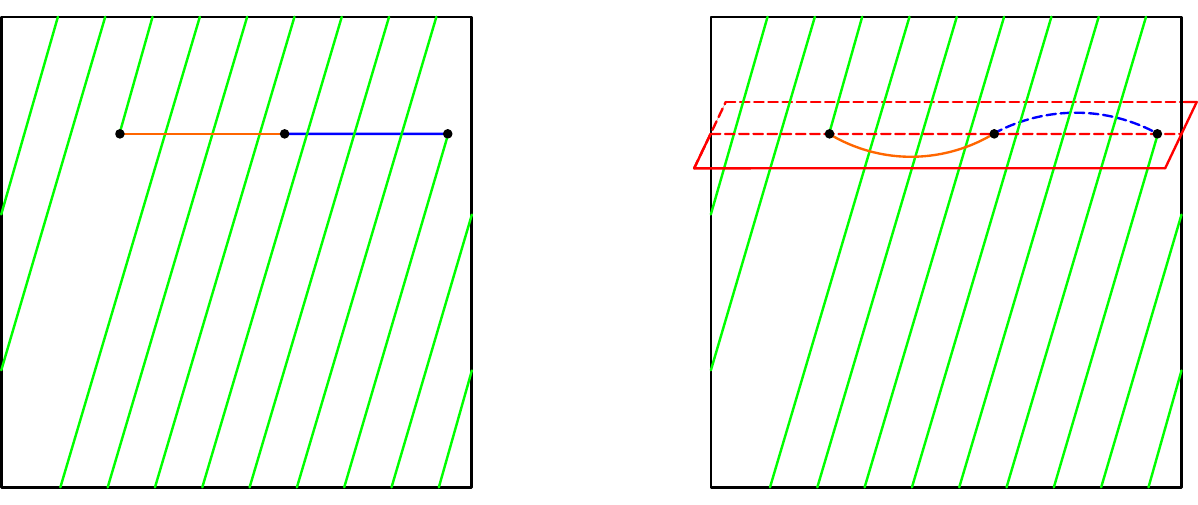}
        \put(3,18){\color{lime} $\rho$}
        \put(13.7,28.8){\color{orange} $\tau_0$}
        \put(29.7,28.8){\color{blue} $\tau_1$}
        \put(99.5,30){\color{red} $S$}
        \put(7.7,28){\small $X$}
        \put(23.3,31.7){\small $Y$}
        \put(36,31.7){\small $Z$}
    \end{overpic}
    \caption{The 3-geodesics description for a knot in $S^1\times S^2$}
    \label{fig:3_geod}
\end{figure}

\begin{proof}
    By Proposition~\ref{prop:geodesics}, $K$ admits a 3-geodesics description $K=\rho\cup\tau_0\cup\tau_1$; see Figure~\ref{fig:3_geod} left. Since $K$ lies in $S^1\times S^2$, the two meridional disks containing $\tau_0$ and $\tau_1$ share the same boundary and patch together to form a standard non-separating sphere $S=\{\star\}\times S^2 \subset S^1\times S^2$; see Figure~\ref{fig:3_geod} right. If $\rho$ has the same slope as $\tau_0$ and $\tau_1$, then $K$ lives in this 2-sphere $S$ and is therefore the local unknot. Otherwise $\rho$ is transverse to the $S^2$-fibers of $S^1\times S^2$, and in particular, intersects $S$ coherently. We can then slightly perturb the arc $\tau=\tau_0\cup\tau_1\subset S$ to make $K$ transverse to the $S^2$-fibers. It follows that $K$ is a spherical braid.
\end{proof}

\subsection{L-space surgeries}

For (spherical) braids,~\cite{rasmussen2017floer} proposed the following criterion for detecting L-space surgeries:

\begin{prop}[\cite{rasmussen2017floer}, Proposition 7.13]
    Let $\sigma$ be a $g$-strand braid in $S^1\times D^2$. Let $\bar{\sigma}$ be the ordinary braid closure in $S^3$ obtained by filling $S^1\times D^2$ along $S^1\times \{\star\}$, and let $\tilde{\sigma}$ be the spherical braid in $S^1\times S^2$ obtained by filling $S^1\times D^2$ along $\partial D^2$. Let $\Delta_g\in \mathrm{Br}_g$ be the $g$-strand full twist. Suppose $K_n=\overline{\Delta_g^n\sigma}$ is an L-space knot in $S^3$ for all $n\geq 0$. Then the complement of $\tilde{\sigma}$ is a generalized solid torus.
    \label{prop:7.13}
\end{prop}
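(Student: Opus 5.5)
The approach is to view the complement of $\tilde\sigma$ as a Dehn filling of the exterior of the two-component link $\bar\sigma\cup A$ in $S^3$, where $A$ is the braid axis. Let $N=S^3-\nu(\bar\sigma\cup A)$. Filling the $A$-cusp along its meridian $\infty$ recovers the exterior of $\bar\sigma$. Filling it along slope $-1/n$ inserts $n$ full twists, giving the exterior of $K_n=\overline{\Delta_g^n\sigma}$. Filling it along slope $0$ (the longitude of $A$, i.e.\ the curve $\partial D^2$) gives the exterior of $\tilde\sigma$ in $S^1\times S^2$. By Proposition~\ref{prop:g_torus}, it suffices to produce a single slope $r$ on the $\bar\sigma$-cusp such that the filling $N(r,0)$ is an L-space. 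That proposition then upgrades this to the statement that the complement of $\tilde\sigma$ is a generalized solid torus.

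\emph{Step 1 (framing bookkeeping).} Fix a framing of the $\bar\sigma$-cusp of $N$. Performing $-1/n$ surgery on $A$ shifts the induced framing on $K_n$ by $n\,\mathrm{lk}(\bar\sigma,A)^2=ng^2$. Meanwhile, $K_n$ is the closure of a braid with $n$ positive full twists added, so its genus satisfies $2g(K_n)-1=n\,g(g-1)+O(1)$. Since $g^2>g(g-1)$, one can fix a single slope $r$, with $r$ large in the fixed framing, such that for every $n\ge 0$ the slope $r$ reads in the Seifert framing of $K_n$ as a rational number at least $2g(K_n)-1$. Because $K_n$ is an L-space knot, $N(r,-1/n)$ is then an L-space for every $n\ge 0$.

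\emph{Step 2 (interval argument on the $A$-cusp).} Let $Y_r=N(r,\cdot)$, a manifold with torus boundary, and let $\mathcal L(Y_r)$ be its set of L-space filling slopes. Step 1 shows $\mathcal L(Y_r)$ contains every slope $-1/n$ with $n\ge0$. Consecutive slopes $-1/n$ and $-1/(n+1)$ have distance one, and their "sum" is $\infty$. The surgery exact triangle for this triad therefore shows that $Y_r$ has more than one L-space filling, so $Y_r$ is Floer simple. By Rasmussen--Rasmussen, $\mathcal L(Y_r)$ is then either a closed interval with rational endpoints, or all slopes except the rational longitude of $Y_r$.

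In the first case, the slopes $-1/n$ accumulate at $0$, and closedness puts $0$ in $\mathcal L(Y_r)$. In the second case, $0\in\mathcal L(Y_r)$ as long as $0$ is not the rational longitude of $Y_r$. That holds because $N(r,0)$ is surgery on the nonzero-winding knot $\tilde\sigma\subset S^1\times S^2$ with $r$ non-longitudinal. In particular $H_1(N(r,0))$ is finite, which we would verify by a quick linking-matrix computation. Either way $N(r,0)$ is an L-space, i.e.\ $\tilde\sigma$ has an L-space surgery, and Proposition~\ref{prop:g_torus} finishes the proof.

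\emph{Main obstacle.} I expect the hardest step to be Step 2's use of the closed-interval structure. One must check that the endpoint of the interval cannot sit exactly at the accumulation point $0$ in a way that excludes it; this is precisely where closedness of the L-space interval in the Floer simple case is needed. One must also confirm that $0$ is not the rational longitude of $Y_r$. The genus estimate in Step 1 is routine but needs the $O(1)$ constants handled uniformly in $n$. The cleanest way is to use that $\Delta_g^n\sigma$ is eventually a positive braid, so its closure has genus given exactly by the Bennequin/Seifert-algorithm formula.
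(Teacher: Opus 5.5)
The paper gives no proof of this statement. It is quoted from Rasmussen--Rasmussen and used as a black box in the proof of Proposition~\ref{prop:coherent}, so there is no argument of the paper's to compare against. Judged on its own, your argument is a correct proof.

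It has three ingredients:
\begin{itemize}
\item Proposition~\ref{prop:g_torus} reduces the claim to finding one L-space filling $N(r,0)$.
\item The fillings $N(r,-1/n)=S^3_{r+ng^2}(K_n)$ are L-spaces for $n\gg 0$.
\item The Rasmussen--Rasmussen dichotomy for Floer simple manifolds then forces $0\in\mathcal L(Y_r)$.
\end{itemize}
The step you flag as the main obstacle is not really one. A closed interval in $\mathbb{P}^1(\mathbb{R})$ with rational endpoints contains every rational limit of its elements. Also, $0$ is not the rational longitude of $Y_r$: $\tilde\sigma$ meets $\{\star\}\times S^2$ algebraically $g\neq 0$ times, so the rational longitude of $S^1\times S^2-\nu(\tilde\sigma)$ is the meridian of $\tilde\sigma$. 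Concretely, $|H_1(N(p/q,0))|=|q|g^2$, so every $r\neq\infty$ works.

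A pleasant by-product is that you only use the hypothesis for $n\gg 0$. This is exactly how the paper applies the proposition, to $\Delta_g^N B$ with $N$ large. That is legitimate because twisting along the disk bounded by $A$ fixes the $0$-slope on the $A$-cusp, so $\widetilde{\Delta_g^N\sigma}$ and $\tilde\sigma$ have homeomorphic complements.

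Some slips to fix, none affecting the conclusion:
\begin{itemize}
\item The third slope of a triad containing $-1/n$ and $-1/(n+1)$ is $0$ (their difference) or $-2/(2n+1)$, not $\infty$. No exact triangle is needed anyway: Floer simple just means having more than one L-space filling, and Step~1 already gives infinitely many.
\item The claim that one large $r$ works for \emph{every} $n\ge 0$ can fail if ``L-space knot'' only means having some nontrivial L-space surgery. For example, $\sigma=\sigma_1^{-3}$ makes $K_0$ the left-handed trefoil, whose large positive surgeries are not L-spaces. Restrict to $n\ge n_0$, where $\Delta_g^n\sigma$ is represented by a positive braid word.
\item For those $n$, $K_n$ is a positive braid closure, so its L-space slopes are positive and $2g(K_n)-1=ng(g-1)+e(\sigma)-g$ exactly, with $e$ the exponent sum. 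Every fixed $r\neq\infty$ then satisfies $r+ng^2\ge 2g(K_n)-1$ for large $n$. So there is no need to choose $r$ large or to control the $O(1)$ term.
\end{itemize}
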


We are now ready to prove Proposition~\ref{prop:coherent}:

\begin{proof}[Proof of Proposition~\ref{prop:coherent}]

    \begin{figure}[!hbt]
        \begin{overpic}[scale=0.5]{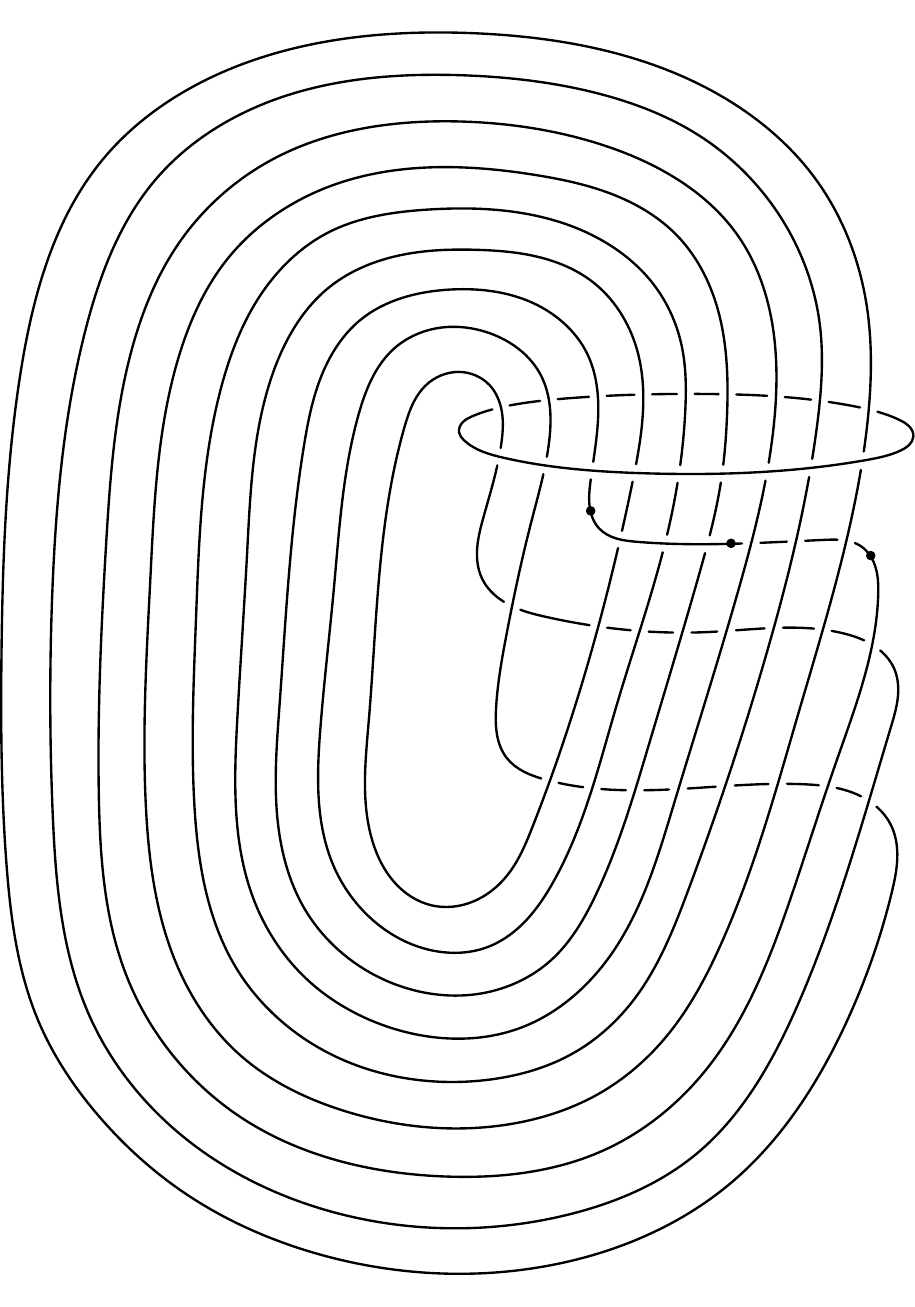}
            \put(70,68){$A$}
            \put(65.5,20){$\bar{B}$}
            \put(42,59){\small $X$}
            \put(55,59){\small $Y$}
            \put(67,58){\small $Z$}
        \end{overpic}
        \caption{The corresponding link $L=A\cup\bar{B}$ in $S^3$}
        \label{fig:link}
    \end{figure}
    
    According to Corollary~\ref{cor:braid}, $K$ is a spherical braid. Moreover, we can push $K$ into the interior of the solid torus $U_1$ (as in Proposition~\ref{prop:geodesics}) and consider the resulting braid $B\subset U_1$. The complement $U_1-\mathrm{int}(N(B))$ can then be understood as the complement of some link $L=A\cup\bar{B}$ in $S^3$, where $K$ can be recovered by performing $0$-surgery on $A$. See Figure~\ref{fig:link}, where the link corresponds to the knot described in Figure~\ref{fig:3_geod}.

    Now consider what happens if we slightly perturb the point $X$ in the 3-geodesics description along the direction of $\rho$. Suppose $X$ is perturbed to $X'$ so that the slope of $\tau_0'=X'Y$ is some rational number $r$ (for $|r|$ small), see Figure~\ref{fig:new_knot}. The new 3-geodesics description $(\rho',\tau_0',\tau_1)$ still describes a $(1,1)$-knot, but this new knot (which we denote as $K_r$) no longer lives in $S^1\times S^2$. In particular, if we pick $r=-\frac{1}{n}$ (for $n$ large enough), then we get a $(1,1)$-knot in $S^3$.

    \begin{figure}[!hbt]
        \begin{overpic}[scale=0.5]{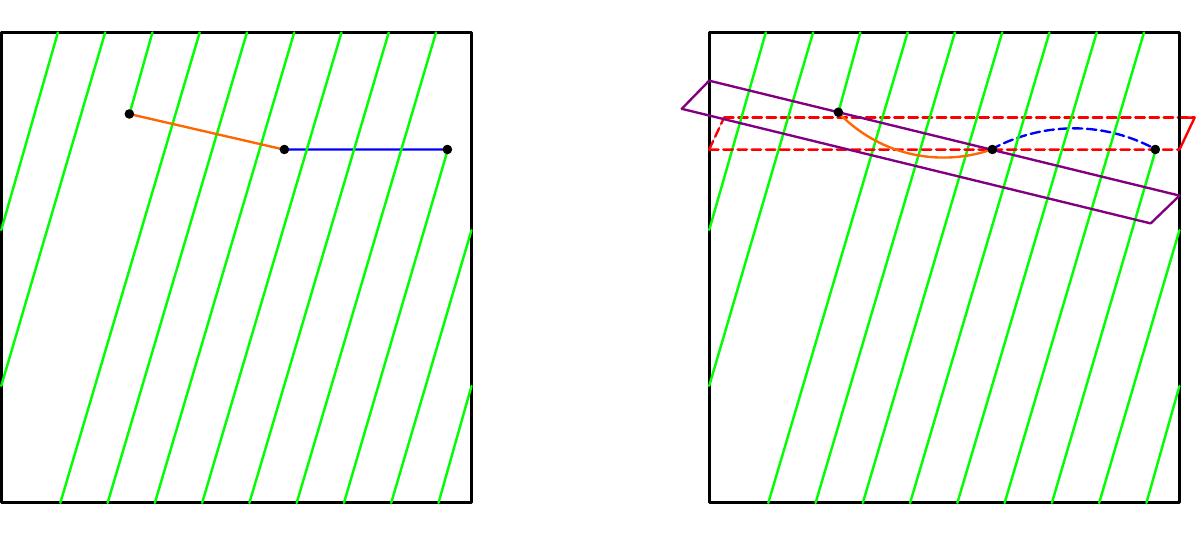}
            \put(2.6,18){\color{lime} $\rho'$}
            \put(15.3,35.1){\color{orange} $\tau_0'$}
            \put(29.7,29.8){\color{blue} $\tau_1$}
            \put(7.7,32.2){\small $X'$}
            \put(23.3,33){\small $Y$}
            \put(36,33){\small $Z$}
            \put(99.8,31){\color{red} $U_1$}
            \put(54.5,38){\color{purple} $U_0'$}
        \end{overpic}
        \caption{A new $(1,1)$-knot obtained by perturbing the endpoint $X$}
        \label{fig:new_knot}
    \end{figure}

    According to~\cite{nie2021explicit}, $K_{\left(-\frac{1}{n}\right)}$ is an L-space knot in $S^3$. Moreover, we can still push $K_{\left(-\frac{1}{n}\right)}$ into the solid torus $U_1$, and (for $n$ large) we will get the same braid $B$ in $U_1$ as before. As a consequence, $K_{\left(-\frac{1}{n}\right)}$ can be obtained from $L$ by performing $\left(-\frac{1}{n}\right)$-surgery on $A$.

    Now we apply Proposition~\ref{prop:7.13}. Suppose $B$ has $g$ strands, and let $\Delta_g$ denote the full twist for $g$-strand braids. Then $K_{\left(-\frac{1}{n}\right)}$ is essentially $\overline{\Delta_g^nB}$, which is an L-space knot in $S^3$ by the previous paragraph. Applying Proposition~\ref{prop:7.13} to $\overline{\Delta_g^NB}$ (for some large $N$), we know the complement of $K$ is a generalized solid torus. In particular, all non-trivial surgeries on $K$ yield L-spaces.
\end{proof}

\section{Incoherent diagrams give persistently foliar knots}
\label{sec:incoherent}

In this section, we prove the following proposition:

\begin{prop}
    Let $K\subset S^1\times S^2$ be a $(1,1)$-knot. If $K$ is represented by a reduced, incoherent diagram, then $K$ is persistently foliar. That is, for every boundary slope other than the knot meridian, there exists a co-oriented taut foliation of the knot complement $(S^1\times S^2 - K)$, which intersects the boundary torus transversely in circles of that slope.
    \label{prop:incoherent}
\end{prop}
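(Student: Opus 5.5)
The plan follows the persistently foliar construction of~\cite{lyu2024knot,lyu2025persistent} for incoherent $(1,1)$-diagrams in $S^3$ and lens spaces, and checks that none of it needs $Y$ to be a rational homology sphere. Put the reduced incoherent diagram in standard form as in Figure~\ref{fig:reduced11}. Incoherence means that for any orientation of $\beta$ there are two rainbow arcs on the same side of $\alpha$ whose induced orientations disagree. The aim is a branched surface $\mathcal{B}\subset S^1\times S^2-N(K)$ with three properties: it is laminar in the sense of Li; it carries no torus or sphere component disjoint from $\partial N(K)$; and it meets $\partial N(K)$ in a train track that fully carries every slope except the meridian. By the Li / Gabai--Oertel machinery in the form used in~\cite{lyu2025persistent}, this gives an essential lamination meeting the boundary in any chosen non-meridional slope. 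The final step extends it to a co-oriented taut foliation by filling the complementary regions, which are $I$-bundles or product-like pieces.

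To build $\mathcal{B}$, decompose $S^1\times S^2-N(K)$ using the two compressing disks $D_\alpha\subset U_0$ and $D_\beta\subset U_1$, both punctured by the bridge arcs of $K$. Start from these disks and the Heegaard torus minus neighborhoods of $z,w$. Smooth the intersections of $\alpha$ and $\beta$ along the torus using the orientation data. At the two oppositely oriented rainbow arcs, the smoothing directions are forced to be opposite, and this is exactly what blocks disks of contact and half-disks of contact. In a coherent diagram all smoothings agree, and a Reeb-type region would appear. I will reuse the local models from the lens space case verbatim, since they only depend on the combinatorics of the diagram on the twice-punctured torus.

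The ambient manifold enters at two points, and these are where I expect the main obstacle. First, one must rule out sink disks and carried $2$-spheres. This is a real concern because $S^1\times S^2$ contains the essential sphere $\{\star\}\times S^2=D_\alpha\cup D_\beta$, and here $\alpha$ and $\beta$ have the same slope, so $|\alpha\cap\beta|$ comes only from rainbow and vertical arcs with algebraic intersection zero. To handle this, I will use that incoherence forces at least one rainbow pair on each side, so the smoothed copies of $D_\alpha$ and $D_\beta$ cannot close up into a sphere that avoids $K$. Any carried sphere would be disjoint from $K$ and isotopic to $S$, which contradicts the fact that the incoherent diagram has bigons and so $K$ meets $S$ essentially. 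Second, we need the boundary train track to realize every slope, including the longitude-type slope whose filling gives $S^1\times S^2$-like or reducible manifolds. To do this, I will compute the two sector weights on the boundary train track as in~\cite{lyu2025persistent}, and check that the realized slope interval is $(-\infty,\infty)$ once the meridian is excluded. With the lamination extended to a taut foliation, this gives the statement of Proposition~\ref{prop:incoherent}.
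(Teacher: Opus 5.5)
There is a genuine gap: you never show that any branched surface fully carries a lamination, and the $S^1\times S^2$-specific inputs you identify are asserted rather than proved.

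\textbf{The laminarity step.} Saying that opposite smoothings at oppositely oriented rainbow arcs ``block disks of contact'' is not an argument, and the real obstruction to laminarity is sink disks. The branched surface built from $\Sigma$ minus the basepoints, with disks glued along $\alpha$ and $\beta$, is smoothed by one global choice of orientations, and it generally has sink disks. Incoherence is used only to find a quadrilateral source sector $T$ whose co-orientation is then reversed.

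Proving that the modified surface $\mathcal{B}_T$ fully carries a lamination is a long induction:
\begin{enumerate}
\item[(a)] replace $\alpha$ by its carrying curve, giving a simpler diagram whose branched surface is a sub-branched surface;
\item[(b)] apply the blow-up criterion, which needs laminations and no disks of contact;
\item[(c)] bottom out at monotone and primitive diagrams, where sink tube pushes make the surface sink-disk-free.
\end{enumerate}

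Reusing the lens-space models ``verbatim'' fails at exactly this point. In $S^1\times S^2$ the algebraic intersection of $\alpha$ and $\beta$ is $0$, so a reduced diagram can have no vertical arcs. In that case the carrying curve must be redefined, as the core of the annulus region, and the primitive diagrams are precisely those with no vertical arcs. The lens-space argument for primitive diagrams used that the two bigons share an endpoint, so that the sink tube is a spiral. That fails here, and a new argument is needed: cut along the annulus core to get a $4$-pointed sphere, then run recursive sink tube pushes on a rational tangle strand.

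\textbf{The ambient-manifold input.} Two facts are actually needed:
\begin{enumerate}
\item[(i)] the knot exterior has no meridional disk, which is needed for Gabai's gluing lemma (no disk leaves) and for the no-disk-of-contact hypothesis in the blow-ups;
\item[(ii)] the exterior contains no non-separating $2$-sphere. Since closed surfaces carried by the Heegaard branched surfaces are non-separating in $N(\mathcal{B})$, this guarantees that no horizontal boundary component of $N(\mathcal{B})$ is a sphere, so Li's theory applies.
\end{enumerate}

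Your justification, that the diagram has bigons and so $K$ meets $S$ essentially, is exactly the claim that has to be proved. A priori a non-simple reduced diagram could represent a local knot or the core knot. The paper proves (ii) in three steps:
\begin{enumerate}
\item[(1)] a non-separating sphere makes $K$ local, $S^1\times S^2-K=(S^3-K')\#(S^1\times S^2)$;
\item[(2)] additivity of Heegaard genus applied to the $0$-surgery, together with Gabai's theorem, forces $K'$ to be the unknot;
\item[(3)] the local unknot is excluded because a reduced diagram has $\mathrm{rank}\,\widehat{HFK}=|\alpha\cap\beta|$, and the only non-simple reduced diagram with $|\alpha\cap\beta|=2$ represents a homologically essential knot.
\end{enumerate}
Fact (i) is proved similarly, using the light bulb theorem and $\widehat{HFK}$ to exclude the core knot.

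\textbf{Smaller points.} The disks bounded by $\alpha$ and $\beta$ are disjoint from $K$, not punctured by it. Co-orientation and tautness of the final foliation come from the specific structure of the Heegaard branched surface, namely a transverse loop through every leaf, not from generically filling complementary regions.
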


\begin{rem}
    Since the taut foliation on $S^1\times S^2$ is unique up to isotopy, the knot complement admits a co-oriented taut foliation that is transverse to the boundary torus in circles of the knot meridian slope if and only if the knot is a spherical braid. It is unclear to us when this happens given a reduced, incoherent $(1,1)$-diagram representing the knot.
\end{rem}

Most of the arguments in~\cite{lyu2024knot,lyu2025persistent} carry over to $S^1\times S^2$, although a few points require slightly different or more careful treatment. We sketch the proof and highlight the differences below, referring the reader to~\cite{lyu2025persistent} for further details.

\subsection{Proof sketch}

We use branched surfaces to construct taut foliations. Starting from the $(1,1)$-diagram, we construct a branched surface, and when the diagram is incoherent, we apply certain modifications to it. We refer the reader to~\cite{lyu2025persistent}, Section~3 for the definitions of the ``modified Heegaard branched surfaces'' and recall the following proposition:

\begin{prop}[\cite{lyu2025persistent}, Proposition 3.5]
    Let $(\Sigma,\alpha,\beta,z,w)$ be an incoherent reduced $(1,1)$-diagram representing some $(1,1)$-knot $K$. Let $\mathcal{B}$ be a (type I or II) modified Heegaard branched surface of the $(1,1)$-diagram. If $\mathcal{B}$ fully carries a lamination, then $K$ is persistently foliar.
  \label{prop:lamitofoli}
\end{prop}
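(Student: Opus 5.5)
The plan is to run the argument of~\cite{lyu2025persistent} for Proposition~3.5 and check at each step that nothing uses the ambient manifold being a rational homology sphere. Let $\mathcal{L}$ be a lamination fully carried by the modified Heegaard branched surface $\mathcal{B}$. First I would split open $\mathcal{B}$, and hence $\mathcal{L}$, along a neighborhood of $K$. The goal is to view $\mathcal{B}$ as a branched surface in the knot exterior $M=S^1\times S^2-\mathrm{int}\,N(K)$ that meets $\partial M$ transversely in a train track $\tau$. This train track should carry every slope other than the meridian, or, more precisely, should carry all slopes in an interval whose closure misses only the meridian. The construction of $\mathcal{B}$ from $(\Sigma,\alpha,\beta,z,w)$ is purely local near $\Sigma$ and the two compression disks. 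Therefore the boundary train track $\tau$ and the slopes it carries are computed exactly as in~\cite{lyu2025persistent}, and this step transfers verbatim.

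Second, I would verify that $\mathcal{B}$ is laminar in the sense of Li, or at least has the properties used to extend laminations to foliations. The required properties are:
\begin{enumerate}[(1)]
\item it has no sink disks, except those punctured by $K$;
\item the complementary regions $M-\mathrm{int}\,N(\mathcal{B})$ are product regions or solid tori adjacent to $\partial M$;
\item $\partial_h N(\mathcal{B})$ is incompressible, and there are no monogons or Reeb components.
\end{enumerate}
The sink-disk analysis is combinatorial in the diagram and transfers directly. The complementary regions also come out the same, since they are built from the two handlebodies $U_0,U_1$ minus product pieces. Once $\mathcal{L}$ is fully carried and the complementary regions are understood, I would follow Gabai--Li style arguments to fill in the complementary regions for each boundary slope carried by $\tau$. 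This fills $\mathcal{L}$ out to a foliation of $M$ transverse to $\partial M$ with the prescribed slope. Co-orientability follows from $\mathcal{B}$ being transversely orientable, which is inherited from the oriented Heegaard splitting.

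Third, and this is where I expect the real difference from~\cite{lyu2025persistent}, I need tautness. In the rational homology sphere setting, one typically argues that every leaf meets a closed transversal. Alternatively, one rules out compact leaves bounding product regions by using that $M$ is irreducible and $\partial$-irreducible, together with the absence of Reeb annuli. In $S^1\times S^2$ the knot exterior could a priori be reducible. I would handle this in two ways. First, the incoherent hypothesis rules out the local unknot: a diagram with no bigons is coherent vacuously, so $K$ is not contained in a ball. Second, a reducing sphere in $M$ would have to be the non-separating sphere, which forces $K$ to miss an essential sphere. I would try to exclude this directly from the reduced $(1,1)$-diagram, since $\beta$ meets $\alpha$ essentially with rainbow arcs on both sides. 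Then, to get tautness, I would produce a closed curve transverse to every leaf. I would take the core transversal built from $\beta$-arcs crossing all sectors of $\mathcal{B}$, exactly as in~\cite{lyu2025persistent}. I would also check that the absence of sink disks prevents any compact leaf from being a sphere or torus that separates off a dead end; a sphere leaf would be the only new possibility in $S^1\times S^2$, and I would rule it out because leaves of $\mathcal{L}$ are carried by $\mathcal{B}$, which meets $K$.
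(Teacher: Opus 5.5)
Your overall plan, to rerun the proof of \cite{lyu2025persistent}, Proposition~3.5 and check where the ambient manifold enters, is the paper's plan. The checks you carry out, however, are aimed at the wrong places.

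\textbf{The laminarity claim is false and not what the hypothesis gives.} Your second step asserts that $\mathcal{B}$ is essentially laminar, in particular sink disk free, and that ``the sink-disk analysis transfers directly.'' This is false. A modified Heegaard branched surface in general contains sink disks: any quadrilateral sector that is both $\alpha$-sink and $\beta$-sink is one. That is exactly why the paper and \cite{lyu2024knot,lyu2025persistent} need sink tube pushes, blow ups and Proposition~\ref{prop:blowup} merely to show that $\mathcal{B}$ fully carries \emph{some} lamination. So you cannot realize every slope carried by a boundary train track using Li's theory for laminar branched surfaces. That is also not what the statement gives you. You are handed one fully carried lamination $\mathcal{L}$, and you must pass from it to foliations directly. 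Tautness then comes from the ``taut features'' of $\mathcal{B}$ (Lemma~\ref{lem:taut}): every leaf of any carried lamination meets a closed transversal lying in $N(\mathcal{B})$. This property is local to $N(\mathcal{B})$, so irreducibility or essentiality of the exterior is never an input, and your third-step focus on reducibility is misplaced.

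\textbf{You miss the disk-leaf issue, and your $S^1\times S^2$ arguments fail.} The point the paper actually singles out is disk leaves. The lamination manipulations in this framework go through Gabai's gluing operation, which requires $\mathcal{L}$ to have no disk leaves. Because the boundary circles and cusps of $\mathcal{B}$ are meridional, a disk leaf would be a meridional disk of $S^1\times S^2-\mathrm{int}(N(K))$. This cannot happen in a rational homology sphere, but it does happen for the core knot of $S^1\times S^2$. The paper excludes it in Lemma~\ref{lem:meri_disk}. By the light bulb theorem such a $K$ would be the core knot. A non-simple reduced diagram has $\widehat{HFK}$ of rank $|\alpha\cap\beta|>0$, whereas the core knot's vanishes. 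Your claim that the extension step ``transfers verbatim'' skips this check.

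The $S^1\times S^2$-specific arguments you do give also fail:
\begin{itemize}
\item That the given diagram has bigons does not show that $K$ is not the local unknot, let alone that $K$ lies in no ball, since $K$ may have other $(1,1)$-positions. The paper needs Haken's additivity of Heegaard genus, Gabai's theorem on genus-one $0$-surgeries, and an $\widehat{HFK}$ rank count (Lemma~\ref{lem:non-sep_sphere}).
\item ``Leaves are carried by $\mathcal{B}$, which meets $K$'' rules out nothing, since $\mathcal{B}$ lies in the exterior. A sphere leaf is excluded instead because a closed carried surface is non-separating in $N(\mathcal{B})$ (Lemma~\ref{lem:taut}). It would therefore be a non-separating sphere in the exterior, which Lemma~\ref{lem:non-sep_sphere} forbids.
\end{itemize}
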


We use an inductive argument to construct laminations fully carried by these ``modified'' branched surfaces, where the key observation is a reduction operation on (reduced) $(1,1)$-diagrams. Let $(\Sigma,\alpha,\beta,z,w)$ be a reduced, non-simple $(1,1)$-diagram (i.e. the reduced diagram contains bigons). We can then define the ``carrying curve'' $\gamma$ of $\alpha$, such that $(\Sigma,\gamma,\beta,z,w)$ is a (strictly) simpler reduced $(1,1)$-diagram. Moreover, up to some further modifications, the branched surface $\mathcal{C}$ constructed for this simpler $(1,1)$-diagram $(\Sigma,\gamma,\beta,z,w)$ can be regarded as a sub-branched surface of the branched surface $\mathcal{B}$ constructed for the original $(1,1)$-diagram $(\Sigma,\alpha,\beta,z,w)$. We can then apply the ``blow up'' techniques summarized in~\cite{lyu2025persistent}:

\begin{defn}[\cite{lyu2025persistent}, Definition 6.18]
  Let $\mathcal{B}$ be a branched surface possibly with circle boundary. A branched surface $\mathcal{C}\subset \mathcal{B}$ is called a \textit{sub-branched surface} of $\mathcal{B}$ if $\partial \mathcal{C}\subset \partial \mathcal{B}$, and each branch sector of $\mathcal{C}$ is a union of branch sectors of $\mathcal{B}$.

  Let $\mathcal{A'}$ be the setwise difference $\mathcal{A'}=\mathcal{B}-\mathcal{C}$. Let $\mathcal{A}$ be the branched surface obtained by first attaching $\mathcal{A'}$ to the horizontal boundary of $N(\mathcal{C})$ (as if attached to $\mathcal{C}$), and then collapsing the boundary train tracks to circles. We call $\mathcal{A}$ the \textit{blow up branched surface of $\mathcal{B}$ with respect to $\mathcal{C}$}.
  \label{defn:blowup}
\end{defn}

\begin{prop}[\cite{lyu2025persistent}, Proposition 6.19]
  Let $\mathcal{B}$ be a branched surface possibly with circle boundary, and $\mathcal{C}\subset \mathcal{B}$ a sub-branched surface. Let $\mathcal{A}$ be the blow up of $\mathcal{B}$ with respect to $\mathcal{C}$. Suppose $\mathcal{C}$ fully carries a lamination and has no disk of contact. Suppose moreover that $\mathcal{A}$ fully carries a lamination. Then $\mathcal{B}$ fully carries a lamination.
  \label{prop:blowup}
\end{prop}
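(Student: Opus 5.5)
The plan is to build the lamination carried by $\mathcal{B}$ by stacking two laminations. I would take a lamination $\lambda_{\mathcal{C}}$ fully carried by $\mathcal{C}$, sitting inside a fibered neighborhood $N(\mathcal{C})$. I would then place a lamination $\lambda_{\mathcal{A}}$ fully carried by $\mathcal{A}$ "on top of" $N(\mathcal{C})$, using the fact that the portion of $\mathcal{A}$ coming from the horizontal boundary $\partial_h N(\mathcal{C})$ is parallel to that boundary. The union should then sit inside a fibered neighborhood of $\mathcal{B}$, transverse to the $I$-fibers. It should fully carry $\mathcal{B}$, because sectors of $\mathcal{C}$ are hit by $\lambda_{\mathcal{C}}$ and sectors of $\mathcal{A}'=\mathcal{B}-\mathcal{C}$ are hit by $\lambda_{\mathcal{A}}$.

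First I would make $N(\mathcal{B})$ explicit. I would decompose $N(\mathcal{B})=N(\mathcal{C})\cup N'$, where $N'$ is an $I$-bundle neighborhood of $\mathcal{A}'$ attached to $N(\mathcal{C})$ along parts of $\partial_h N(\mathcal{C})$, near the branch locus where $\mathcal{A}'$ meets $\mathcal{C}$. By the definition of the blow up (Definition~\ref{defn:blowup}), $N(\mathcal{A})$ is obtained from a thin collar $\partial_h N(\mathcal{C})\times[0,\epsilon]$ together with $N'$, and then the boundary train tracks (coming from $\partial_v N(\mathcal{C})$ and the cusps) are collapsed to circles.

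Second, I would prepare $\lambda_{\mathcal{C}}$. Since $\mathcal{C}$ has no disk of contact and fully carries a lamination, I would split $\lambda_{\mathcal{C}}$ open along its boundary leaves if necessary. The goal is that $\lambda_{\mathcal{C}}$ lies in the interior of $N(\mathcal{C})$ and misses a collar of $\partial_h N(\mathcal{C})$, with the complementary region between $\lambda_{\mathcal{C}}$ and $\partial_h N(\mathcal{C})$ an honest product (up to the vertical boundary). The absence of disks of contact is what I expect to rule out compressing disks in these complementary regions. It should also let the outermost leaves be pushed to be parallel to $\partial_h N(\mathcal{C})$ away from $\partial_v N(\mathcal{C})$.

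Third, I would insert $\lambda_{\mathcal{A}}$. Its leaves in the collar part of $N(\mathcal{A})$ are transverse to the fibers of $\partial_h N(\mathcal{C})\times[0,\epsilon]$, and their continuation into $N'$ is carried by $\mathcal{A}'$. Realizing the collar inside the gap left in the previous step gives a closed union $\lambda=\lambda_{\mathcal{C}}\cup\lambda_{\mathcal{A}}\subset N(\mathcal{B})$ transverse to the fibers, so $\lambda$ is carried by $\mathcal{B}$. Full carrying then follows as described above.

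The main obstacle is the collapse of boundary train tracks to circles in the definition of $\mathcal{A}$. Near $\partial_v N(\mathcal{C})$, the leaves of $\lambda_{\mathcal{A}}$ spiral or accumulate toward the collapsed circles, and I need them to "un-collapse" consistently into the cusp regions of $N(\mathcal{C})$ without crossing leaves of $\lambda_{\mathcal{C}}$ or creating leaves tangent to $\partial_v$. My first attempt would be to thicken each collapsed circle back into the corresponding annulus or train-track neighborhood in $\partial_v N(\mathcal{C})$, and to extend the leaves of $\lambda_{\mathcal{A}}$ by the product structure there. The no-disk-of-contact hypothesis would again be used to ensure that this extension introduces no trivial (disk) complementary regions that would obstruct the leaves from closing up.
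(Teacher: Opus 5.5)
The paper imports this result from \cite{lyu2025persistent} without proof, so I am judging your outline on its own terms. The stacking idea is natural, but there is a genuine gap at exactly the step you call the ``main obstacle'', and nothing you propose closes it.

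\emph{Why the union is not a lamination carried by $\mathcal{B}$.} Some leaves of $\lambda_{\mathcal{A}}$ are carried by the sectors of $\mathcal{A}$ that come from the inner pieces of $\partial_h N(\mathcal{C})$. These leaves end on the collapsed circles, i.e.\ on components $c$ of $\partial_v N(\mathcal{C})$. Such $c$ are cusps of $\mathcal{C}$, so they lie in the \emph{interior} of $\mathcal{B}$; only $\partial\mathcal{C}\subset\partial\mathcal{B}$ is boundary. Locally, a sheet $T$ lying above a sheet $U$ merges into a sheet $V$. The $\lambda_{\mathcal{A}}$-leaves running along the bottom of $T$ and the top of $U$ reach $c$, but the $I$-fibers of $N(\mathcal{B})$ continue past $c$ over $V$. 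Hence $\lambda_{\mathcal{C}}\cup\lambda_{\mathcal{A}}$ contains surfaces with boundary in the interior of $N(\mathcal{B})$.

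Re-thickening the collapsed circle to $c\times I$ and extending ``by the product structure there'' does not help: the leaves still terminate on $c\times I$. They must be pushed past the cusp into the interior of $N(\mathcal{C})$ over $V$. That is, they must enter the interstitial region $R$ of $\lambda_{\mathcal{C}}$ at $c$: the region between the lowest leaf coming from $T$ and the highest leaf coming from $U$. Beyond the cusp there is no collar of $\partial_h N(\mathcal{C})$ to hold them.

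Relatedly, the ``gap'' in your second step is an honest collar only away from $\partial_v N(\mathcal{C})$. Producing it is just a fiberwise shrinking of $\lambda_{\mathcal{C}}$, which needs no hypothesis at all. So your outline does not locate where ``no disk of contact'' is actually used.

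\emph{What a proof along your lines must do.} It has to analyze $R$. This region is an $I$-bundle over a surface $F$, possibly non-compact, with $c$ among its boundary circles; it is a product $F\times I$ when $\mathcal{C}$ is co-oriented. The natural move is to extend each circle $c\times\{t\}$ of $\lambda_{\mathcal{A}}$ by $F\times\{t\}$. Closedness then requires that, when $F$ is non-compact, these leaves accumulate only on $\lambda_{\mathcal{C}}$. Two issues must then be handled, and your outline addresses neither.
\begin{enumerate}
\item If $F$ is a disk, then $R$ contains a disk of contact, and the extension merely caps $\lambda_{\mathcal{A}}$-leaves off by disks. This, not your second step, is where the hypothesis on $\mathcal{C}$ naturally enters.
\item If $\partial F$ contains several cusp circles, the extension joins them. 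The heights at which $\lambda_{\mathcal{A}}$ meets those different cusps must therefore be matched. Alternatively, $\lambda_{\mathcal{A}}$ or $\lambda_{\mathcal{C}}$ must first be modified, e.g.\ by blowing up leaves, so that such a matching exists.
\end{enumerate}
Without this step, the argument is a plausible skeleton rather than a proof.
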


Once we prove that the blow up branched surface $\mathcal{A}$ always fully carries a lamination, Proposition~\ref{prop:blowup} allows us to reduce everything to what we call the ``monotone'' diagrams and ``primitive'' diagrams, where we find laminations directly using laminar branched surfaces.

\vspace{3pt}

\begin{rems}~\
    \begin{enumerate}
        \item In~\cite{lyu2025persistent} reductions occur in both sections 4 and 6. In Section~4, $\mathcal{A}$ is simple and carries a lamination trivially (in fact, $\mathcal{C}$ can usually be obtained by splitting $\mathcal{B}$, and a lamination fully carried by $\mathcal{C}$ passes directly to $\mathcal{B}$). In Section~6, $\mathcal{A}$ is more complicated and needs to be split to become laminar. 
        \item In Subsection~6.4 of~\cite{lyu2025persistent}, the modified branched surface is first reduced to an earlier branched surface constructed in~\cite{lyu2024knot}. Further reductions may then be performed as in~\cite{lyu2024knot}. Thus, we need to carry over the reductions from both papers to $S^1\times S^2$.
    \end{enumerate}
\end{rems}

Since the above arguments are mainly based on non-simple or incoherent diagrams, most of them also work for incoherent diagrams in $S^1\times S^2$. There are two main differences. First, a reduced $(1,1)$-diagram in $S^1\times S^2$ may have no vertical arcs. This slightly affects the diagram decomposition used to define the carrying curves and reduction operations. We address this in Subsection~\ref{subsec:diadecomp}. Second, at various stages we use the fact that the ambient space contains no non-separating closed surfaces to satisfy the conditions of the laminar branched surface theory. We explain
the necessary adjustments in Subsection~\ref{subsec:incomp}.

\subsection{Diagram decompositions}
\label{subsec:diadecomp}

In this subsection, we discuss differences in diagram decompositions in $S^1\times S^2$.

\vspace{6pt}

\textbf{Simple diagrams.} We still call a reduced $(1,1)$-diagram \textit{simple} if there are no bigons (or equivalently, no rainbow arcs). When the ambient space is $S^1\times S^2$, this also implies that there are no vertical arcs. It follows that $\alpha,\beta$ are two parallel curves that do not intersect. Depending on whether the two basepoints fall into the same or different annuli, the diagram represents the local unknot or the core knot in $S^1\times S^2$. These are in place of the ``simple knots'' in $S^3$ and lens spaces.

\vspace{6pt}

\textbf{No vertical arcs.} Since in $S^1\times S^2$ the algebraic intersection number of $\alpha$ and $\beta$ is $0$, there may be no vertical arcs when we place $\alpha$ in the standard horizontal position. When this happens, $\Sigma-(\alpha\cup\beta)$ consists of $2$ bigons, many quadrilaterals, and an annulus.

Choose arbitrary orientations of the curves $\alpha,\beta$ and recall the following definition:

\begin{defn}[\cite{lyu2024knot}, Definition 2.3]
  Fixing orientations of $(\Sigma,\alpha,\beta)$, for each $\alpha$-arc cut out by $\beta$, we say it is ($\beta$-)\textbf{sink} if it always lies to the left of the $\beta$ curve at its endpoints, ($\beta$-) \textbf{source} if it always lies to the right of the $\beta$ curve at endpoints, and ($\beta$-)\textbf{parallel} otherwise. See Figure~\ref{fig:sink_source_parallel} $a\sim c$. It follows immediately that for a quadrilateral region its two boundary $\alpha$-arcs must be of the same type. We call it a ($\beta$-)\textbf{sink sector} if its boundary $\alpha$-arcs are sink, a ($\beta$-)\textbf{source sector} if its boundary $\alpha$-arcs are source, and a ($\beta$-)\textbf{parallel sector} otherwise. See Figure~\ref{fig:sink_source_parallel} $d\sim f$. In addition, we call a bigon region a ($\beta$-)\textbf{sink bigon} if its boundary $\alpha$-arc is sink, and a ($\beta$-)\textbf{source bigon} if its boundary $\alpha$-arc is source.

  By interchanging $\alpha$ and $\beta$ in the above definition, we can also define $\alpha$-sink (source, parallel) arcs, $\alpha$-sink (source, parallel) sectors, and $\alpha$-sink (source) bigons.
  \label{def:sink_source_parallel}
\end{defn}

\begin{figure}[!hbt]
  \begin{overpic}[scale=0.6]{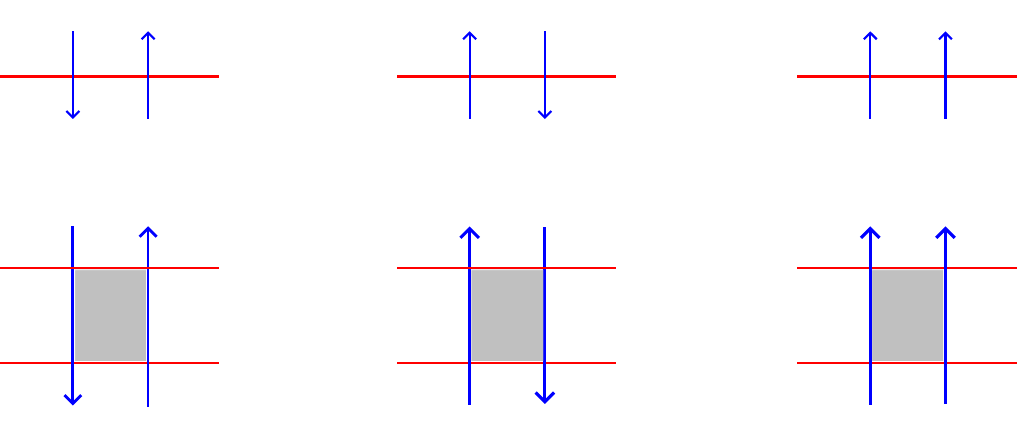}
      \put(22,34){$\color{red}\alpha$}
      \put(16,40){$\color{blue}\beta$}
      \put(4,26){$a.\;$sink arc}
      \put(42,26){$b.\;$source arc}
      \put(80,26){$c.\;$parallel arc}
      \put(2.5,0){$d.\;$sink sector}
      \put(40,0){$e.\;$source sector}
      \put(78.4,0){$f.\;$parallel sector}
  \end{overpic}
  \caption{Arcs and sectors}
  \label{fig:sink_source_parallel}
\end{figure}

When there are no vertical arcs, the boundary of the annulus component consists of 2 $\alpha$-arcs and 2 $\beta$-arcs. By the symmetry from the hyperelliptic involution\footnote{The 2-pointed torus $(\Sigma,z,w)$ can be regarded as a quotient orbifold of the genus 2 closed surface under an involution. The hyperelliptic involution on the genus 2 surface then descends to an involution on $(\Sigma,\alpha,\beta,z,w)$, interchanging the basepoints while reversing the orientations of $\alpha$ and $\beta$. See~\cite{lyu2024knot}, Section~2 for more details.}, there is exactly 1 $\beta$-sink $\alpha$-arc and 1 $\beta$-source $\alpha$-arc. It follows that we can still similarly define the $\beta$-sink tube and $\beta$-source tube. Recall there is exactly one $\beta$-sink bigon and one $\beta$-source bigon by the symmetry.

\begin{defn}[cf. \cite{lyu2024knot}, Definition 2.6]
  In a non-simple, reduced $(1,1)$-diagram without vertical arcs, the ($\beta$-)\textbf{sink tube} is the union of ($\beta$-)sink sectors that connect the ($\beta$-)sink bigon to the annulus. The ($\beta$-)\textbf{source tube} is the union of ($\beta$-)source sectors that connect the ($\beta$-)source bigon to the annulus.
  \label{def:sink_tube}
\end{defn}

The sink tube still contains all the sink sectors. Moreover, since the annulus does not have boundary $\alpha$-arcs that are $\beta$-parallel, there are in fact no $\beta$-parallel sectors or $\beta$-parallel $\alpha$-arcs:

\begin{lem}[cf. \cite{lyu2024knot}, Proposition 2.7]
  The ($\beta$-)sink (resp. source) tube contains all ($\beta$-)sink (resp. source) sectors. Moreover, if the diagram has no vertical arcs, then there are no ($\alpha$- or $\beta$-)parallel arcs or sectors.
  \label{lem:char_sink_tube}
\end{lem}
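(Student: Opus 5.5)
The plan has two parts, matching the two assertions of the statement.

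For the first part, I follow the argument of \cite{lyu2024knot}, Proposition~2.7. Take a $\beta$-sink sector $Q$. Its two boundary $\alpha$-arcs are sink, and its two boundary $\beta$-arcs are parallel. Cross one of the sink $\alpha$-arcs of $Q$. Because that arc is sink on both ends, the region on its other side is again a bigon, a sink sector, or the annulus. It cannot be a source or parallel sector, since both boundary $\alpha$-arcs of a quadrilateral have the same type. Continuing to cross sink $\alpha$-arcs gives a chain of sink sectors. The chain cannot close up into a cycle: such a cycle would be an annulus made of quadrilaterals whose core is parallel to $\beta$-arcs, and it would contain no basepoint or bigon, which contradicts reducedness. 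So the chain ends at both ends in regions with exactly one sink $\alpha$-arc on the relevant side. The only candidates are the unique sink bigon and the annulus, whose unique $\beta$-sink $\alpha$-arc was established from the hyperelliptic symmetry. The chain also cannot end at the same region at both ends, since each of these regions has only one sink arc. Hence the chain is exactly the sink tube, and every sink sector lies in it. The source case is symmetric, using the involution, which swaps sink and source.

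For the second part, I argue that parallel $\alpha$-arcs would also have to propagate. A $\beta$-parallel $\alpha$-arc bounds regions on both sides. Those regions cannot be bigons, since a bigon's $\alpha$-arc is sink or source. So they are parallel sectors or the annulus. Crossing from a parallel sector through its other parallel $\alpha$-arc gives a chain of parallel sectors. By the same reducedness argument, this chain cannot close up, so it must terminate. With no vertical arcs, every $\alpha$-arc that is not in a bigon's boundary lies between two quadrilaterals, or between a quadrilateral and the annulus. The only possible terminal region is therefore the annulus, entered through a parallel boundary $\alpha$-arc. But the annulus has exactly one sink and one source boundary $\alpha$-arc, so it has no parallel one. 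Hence there are no $\beta$-parallel arcs or sectors.

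For the $\alpha$-versions, I interchange the roles of $\alpha$ and $\beta$. This requires checking that the decomposition is symmetric. When $\alpha$ and $\beta$ have algebraic intersection $0$ and there are no vertical arcs in the standard position of $\alpha$, the complementary regions are again $2$ bigons, quadrilaterals, and an annulus, whose boundary consists of $2$ $\alpha$-arcs and $2$ $\beta$-arcs. The hyperelliptic symmetry gives one $\alpha$-sink and one $\alpha$-source $\beta$-arc on the annulus, and the same argument applies.

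I expect the main obstacle to be the no-cycle claim: ruling out a closed ring of sink or parallel sectors. That step needs reducedness together with the position of the basepoints. I would handle it by noting that such a ring is an embedded annulus bounded by two $\beta$-parallel curves that contains no basepoint. Its core would then cobound a basepoint-free region with the annulus component, and this would allow an isotopy removing bigons. That contradicts the diagram being reduced and non-simple.
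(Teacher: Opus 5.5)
Your route is the paper's. A quadrilateral's two $\alpha$-arcs have the same type, so sink (or parallel) sectors form chains glued along $\alpha$-arcs whose ends must lie in non-quadrilateral regions. The two bigons and the annulus supply exactly one sink end and no parallel end. The paper cites \cite{lyu2024knot}, Proposition~2.7 for the first part and states the second in one line: the annulus has no $\beta$-parallel $\alpha$-arcs.

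\textbf{The gap: excluding a closed ring.} The step that fails as written is the one you flagged, ruling out a closed ring of quadrilaterals glued along $\alpha$-arcs.

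First, such a ring is not bounded by two curves parallel to $\beta$. Two quadrilaterals sharing an $\alpha$-arc lie on opposite sides of $\alpha$. So at each shared vertex their $\beta$-edges are the two halves of $\beta$ through that point. Each side of the ring is therefore a non-backtracking closed walk along $\beta$, and it runs over all of $\beta$.

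Second, ``basepoint-free, hence contradicts reducedness'' is not an argument. Reducedness only says every bigon contains a basepoint, and a ring of quadrilaterals produces no bigon. The isotopy you sketch, via the core cobounding a region with the annulus component, is never justified.

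\textbf{The fix.} The correct contradiction is a count. A ring of $n$ distinct quadrilaterals has $n$ $\beta$-edges on each side. A side covers $\beta$ some $k\geq 1$ times, so $n=k|\alpha\cap\beta|\geq |\alpha\cap\beta|$. An Euler characteristic count on the torus shows there are only $|\alpha\cap\beta|-2$ quadrilaterals in total: there are $|\alpha\cap\beta|$ vertices and $2|\alpha\cap\beta|$ edges, the annulus contributes $0$, and two of the disk faces are bigons. This uses only non-simplicity, not the basepoints. With this replacement your sink, source and parallel chain arguments all go through.

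\textbf{Two smaller points.}
\begin{itemize}
\item The $\alpha$-half of the second assertion needs no symmetry argument. A $\beta$-arc is $\alpha$-parallel exactly when its endpoints lie on opposite sides of $\alpha$, i.e.\ exactly when it is a vertical arc. So the hypothesis already says there are no $\alpha$-parallel arcs, hence no $\alpha$-parallel sectors.
\item Your aside that the $\beta$-arcs of a $\beta$-sink sector are ``parallel'' is false if read as $\alpha$-parallel. Here every $\beta$-arc is a rainbow, hence $\alpha$-sink or $\alpha$-source. The aside is unused, so just drop it.
\end{itemize}
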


Recall that the carrying curve of $\beta$ was defined to be isotopic to $\beta$ on the torus $\Sigma$ while passing through only the $\beta$-parallel $\alpha$-arcs (see~\cite{lyu2024knot}, Definition 3.5). Since there are no such $\alpha$-arcs here, we can simply define the carrying curve of $\beta$ to be the core curve $\delta$ of the annulus.

\begin{defn}
    Let $(\Sigma,\alpha,\beta,z,w)$ be a reduced, non-simple $(1,1)$-diagram in $S^1\times S^2$ that contains no vertical arcs when $\alpha$ is placed in the standard horizontal position. Then we define the carrying curve of $\beta$ to be the core curve of the annulus component of $\Sigma-(\alpha\cup\beta)$.
\end{defn}

In particular, the carrying curve of $\alpha$ would be the same curve $\delta$, and replacing $\alpha$ or $\beta$ with $\delta$ would yield a simple diagram (representing the core knot).

\vspace{6pt}

\textbf{Primitive diagrams.} Primitive diagrams serve as one terminal case of our reductions. They are characterized (in the reduction procedure) by the following property: if we replace either curve with its carrying curve, we would get a simple diagram (see~\cite{lyu2024knot}, Proposition 3.9). \textbf{In $S^1\times S^2$, these are exactly the diagrams with no vertical arcs.} We remark that this to some extent also matches the original diagrammatic definition (see~\cite{lyu2024knot}, Proposition 3.3): for the vertical arcs to be of the same direction, there must be no vertical arcs; this in turn forces the rainbow arcs to be alternating since there are no parallel arcs or sectors.

In~\cite{lyu2024knot}, Proposition 3.4 and~\cite{lyu2025persistent}, Lemma 6.5, laminations fully carried by branched surfaces associated to primitive diagrams were constructed directly. This relies on an observation that for primitive diagrams in $S^3$ and lens spaces, the two (innermost) bigons share an endpoint; we then deduce that the sink tube is a spiral, and the branched surface becomes laminar after a single splitting called the ``sink tube push''. In $S^1\times S^2$, things are different: the two bigons could be away from each other. However, the diagram is essentially characterized by a rational tangle, and we could still apply the splitting techniques in the previous papers to construct laminations. We explain this in more detail in Appendix~\ref{app:detail}, after reviewing the necessary background from the earlier papers.

\subsection{Essentiality and incompressibility}
\label{subsec:incomp}

In this subsection, we address the incompressibility issues for $S^1\times S^2$.

\vspace{6pt}

\textbf{Disk leaves and disks of contact.} In order to modify the branched surfaces and laminations we need to use Gabai's gluing lemma (\cite{gabai1992taut}, Operation 2.4.4, see also~\cite{li2002laminar}, Lemma 3.4). This requires the laminations to have no disk leaves. Moreover, to apply Proposition~\ref{prop:blowup} we require the sub-branched surface to have no disks of contact. Since our branched surfaces always have meridional cusps and boundary circles, it suffices to show that our knot complement admits no meridional disks. This is obvious for knots in rational homology spheres. Fortunately, it is still true for non-trivial knots in $S^1\times S^2$:

\begin{lem}
    Let $K\subset S^1\times S^2$ be a $(1,1)$-knot in $S^1\times S^2$ represented by a non-simple reduced $(1,1)$-diagram. Then $S^1\times S^2 - \mathrm{int}(N(K))$ admits no meridional disks.
    \label{lem:meri_disk}
\end{lem}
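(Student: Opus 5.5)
The plan is to turn a meridional disk into a $2$-sphere meeting $K$ exactly once, use the light bulb trick to identify $K$ with the core $S^1\times\{\star\}$, and then rule out the core using the non-simplicity of the diagram. I read a \emph{meridional disk} in $S^1\times S^2-\mathrm{int}(N(K))$ as a properly embedded disk $D$ whose boundary is a meridian of $K$; this is the case relevant to the meridional cusps mentioned just before the lemma.

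First, suppose such a disk $D$ exists. Capping $D$ off with a meridian disk of $N(K)$ gives an embedded sphere $S'\subset S^1\times S^2$ meeting $K$ transversely in exactly one point. Then $[K]\cdot[S']=\pm1$, so $S'$ is non-separating. By the light bulb trick, a knot in $S^1\times S^2$ that meets an embedded sphere transversely in one point is isotopic to the core $S^1\times\{\star\}$: one can undo all crossings of $K$ by passing strands around the sphere. Hence $K$ is isotopic to the core knot, whose complement is a solid torus.

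So it remains to show that a $(1,1)$-knot represented by a \emph{non-simple} reduced diagram is not the core. I would do this by computing an invariant of the knot complement directly from the reduced diagram.

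\textbf{Option 1 (Floer-theoretic).} Use the count of generators. For a reduced genus-one doubly pointed diagram, every bigon contains a basepoint. In the universal cover, every domain of a Whitney disk with Maslov index one that avoids both basepoints is then a lifted bigon, which forces it to meet a basepoint. So the differential on the knot Floer chain complex vanishes, possibly after a twisted or admissibility adjustment to handle $b_1=1$. The rank of knot Floer homology is then $|\alpha\cap\beta|$. A non-simple diagram has at least two rainbow arcs, and hence at least $4$ intersection points. The core has a simple diagram with much smaller Floer homology. Comparing the knot Floer homology in the appropriate $\mathrm{Spin}^c$/Alexander gradings, or its total rank with suitable twisted coefficients, then distinguishes $K$ from the core.

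\textbf{Option 2 (topological fallback).} If admissibility issues in $S^1\times S^2$ make Option 1 too delicate, read off $\pi_1(S^1\times S^2-K)$ from the $(1,1)$-diagram. It is a one-relator group on two generators, obtained from the two solid tori each minus an unknotted arc. Then show this group is not $\mathbb{Z}$ when a rainbow arc exists. The idea is that the relator word, read along $\beta$, is cyclically reduced of length at least two in the free group, so the group is not cyclic unless the diagram is simple. The non-existence of parallel arcs in the no-vertical-arc case (Lemma~\ref{lem:char_sink_tube}) should help control the form of the word.

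The main obstacle is this last step: ruling out that a non-simple reduced diagram secretly represents the core. Both approaches need care specific to $S^1\times S^2$: admissibility for Floer homology, or a careful reading of cancellations in the relator. I would first try the Floer rank argument, since reducedness is precisely the condition that kills the differential.
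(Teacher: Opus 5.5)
Your Option~1 is essentially the paper's proof: cap the disk off to a non-separating sphere meeting $K$ once, use the light bulb theorem to identify $K$ with the core, and then rule out the core because a reduced non-simple diagram gives $\widehat{HFK}$ (read as $SFH$ of the complement with two meridional sutures) of rank $|\alpha\cap\beta|>0$.

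Three small corrections. First, the core's $\widehat{HFK}$ simply vanishes, since its simple diagram has $\alpha\cap\beta=\emptyset$, so no grading or twisted-coefficient comparison is needed. Second, a non-simple diagram may have only two intersection points, as in the $(2,1,0,1)$ diagram where the two rainbow arcs share endpoints, not four; this still suffices. Third, Option~2 is incomplete as sketched, because a cyclically reduced relator of length two such as $ab$ still presents $\mathbb{Z}$.
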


\begin{proof}
    Suppose $S^1\times S^2 - \mathrm{int}(N(K))$ admits a meridional disk. By sewing $N(K)$ back we get a 2-sphere in $S^1\times S^2$, and it is non-separating since it intersects $K$ only once. Now this 2-sphere can be isotoped to the standard sphere $\{p\}\times S^2$, and $K$ is isotopic to the core knot by the classical 3-dimensional light bulb theorem (see, for example, \cite{rolfsen2003knots}).

    It remains for us to show that if $K\subset S^1\times S^2$ is represented by a non-simple reduced $(1,1)$-diagram $(\Sigma,\alpha,\beta,z,w)$, then it cannot be the core knot. This is probably most easily seen via knot Floer homology\footnote{Since there are homologically essential knots, here we understand the knot Floer homology $\widehat{HFK}(K)$ as the sutured Floer homology $SFH(S^1\times S^2-\mathrm{int}(N(K)), \Gamma(K))$, where $\Gamma(K)$ is a pair of oppositely oriented meridian sutures.}: such a knot $K$ must have non-trivial $\widehat{HFK}$ (whose rank equals $|\alpha\cap\beta|$), while $\widehat{HFK}$ of the core knot vanishes.
\end{proof}

\vspace{6pt}

\textbf{Non-separating spheres.} Laminar branched surfaces were introduced to find \textit{essential} laminations. For the blown-up branched surface $\mathcal{A}$ the essential conditions are hard to make sense of, so we only care whether they fully carry laminations. The tautness of the resulting foliation is guaranteed by certain ``taut features'' of the original branched surface $\mathcal{B}$:

\begin{lem}[see~\cite{lyu2024knot}, Section~4 and~\cite{lyu2025persistent}, Proposition 3.5]
    Let $\mathcal{B}$ be a branched surface constructed as in~\cite{lyu2024knot}, Definition 2.13 or~\cite{lyu2025persistent}, Definitions 3.1$\sim$3.2. Then any lamination $\mathcal{L}$ carried by $\mathcal{B}$ is co-oriented and taut (to every leaf $F$ there is a circle in $N(\mathcal{B})$ transverse to $\mathcal{L}$ while intersecting $F$). In particular, any closed surface carried by $\mathcal{B}$ is non-separating in $N(\mathcal{B})$.
    \label{lem:taut}
\end{lem}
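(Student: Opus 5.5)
The lemma makes two claims. The first is that any lamination carried by $\mathcal{B}$ is co-oriented and taut. The second, a consequence of the first, is that carried closed surfaces are non-separating in $N(\mathcal{B})$. I would prove them in that order.

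\textbf{Co-orientation.} The branched surfaces of~\cite{lyu2024knot}, Definition 2.13 and~\cite{lyu2025persistent}, Definitions 3.1$\sim$3.2 are built from the Heegaard torus $\Sigma$ together with product disks attached along $\alpha$ and $\beta$ (and the modified sectors). Each branch sector comes with a prescribed co-orientation: the one pointing from $U_0$ to $U_1$ on $\Sigma$, and the one induced from the chosen orientations of $\alpha,\beta$ on the attached disks. The branch locus is smoothed compatibly with these. I would first record that the co-orientations agree across the branch locus. This is purely local, so the arguments of the cited papers apply verbatim in $S^1\times S^2$: they never use that $Y$ is a rational homology sphere. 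Hence $N(\mathcal{B})$ carries a transverse orientation of its $I$-fibers, and every carried lamination $\mathcal{L}$ inherits a co-orientation.

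\textbf{Tautness.} Here I would exhibit explicit closed transversals inside $N(\mathcal{B})$. Any leaf $F$ of $\mathcal{L}$ passes through some branch sector $s$, and it suffices to find, for every sector $s$, a closed curve in $N(\mathcal{B})$ transverse to the fibration by leaves, running through the $I$-fiber over $s$ in the positive direction. I would build these curves by concatenating positively oriented $I$-fibers with arcs that move along the branched surface in the direction dictated by the branching. These are the ``taut features'' of the construction: near the basepoints $z,w$, sectors are connected transversally through the meridional cusps around the knot, and the $\alpha$- and $\beta$-disks let one pass from the $U_0$ side of $\Sigma$ to the $U_1$ side and back.

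In~\cite{lyu2024knot}, Section~4, this is done by listing, for each type of sector (bigon, sink/source/parallel quadrilateral, annulus-type regions), a transverse loop. I would check that the same loops exist here. The one new case is a diagram without vertical arcs, where by Lemma~\ref{lem:char_sink_tube} there are no parallel sectors and there is an annulus region. For this annulus I would use a transverse loop running through the core curve direction, crossing from one side to the other via the sink tube and the source tube. I expect this case analysis to be the main obstacle: the no-vertical-arc configuration, and the modified sectors of type I/II, need their transversals checked afresh.

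\textbf{Non-separation.} Suppose a closed surface $S$ is carried by $\mathcal{B}$. Viewed as a leaf of the lamination it forms, $S$ meets some closed transversal $c\subset N(\mathcal{B})$. Since $c$ is transverse to a co-oriented surface, every intersection of $c$ with $S$ has the same sign. So the algebraic intersection number $c\cdot S$ is nonzero, and therefore $S$ is non-separating in $N(\mathcal{B})$.
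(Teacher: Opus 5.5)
Your co-orientation paragraph is fine, and so is the closing deduction: a closed transversal meets a carried closed surface $S$ with only one sign, so $c\cdot S\neq 0$ and $S$ cannot separate. The gap is the tautness step, which is the real content of the lemma. You never construct the closed transversals, and the mechanism you describe is not what makes them exist.

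\textbf{The disks alone cannot bring a transversal back across $\Sigma$.} In the exterior $X_K$, the punctured torus $\Sigma\cap X_K$ cobounds $U_0\cap X_K$ together with an annulus of $\partial N(K)$. Hence every closed curve in $X_K$ has algebraic intersection $0$ with it. In the plain smoothing of the $2$-complex $B$, $\Sigma$ is a carried surface and every $\Sigma$-sector is co-oriented from $U_0$ to $U_1$. So a closed transversal in $N(B)$ never crosses $\Sigma$ at all.

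\textbf{What actually makes these branched surfaces taut.} A loop through the full $I$-fiber of a $\Sigma$-sector, as your reduction requires, exists only because of the modification built into the definitions you are citing. Removing the source bigon at $w$ lets $D_\alpha$ and $D_\beta$ continue smoothly into the adjacent $\Sigma$-sectors. A transversal can then drop from the $U_1$ side to the $U_0$ side through the hole, e.g.\ from just below $D_\beta$ into the lower half of the fiber over the $\Sigma$-sector across the $\beta$-edge of that bigon. Reversing a source sector $T$ plays the same role. Every closed transversal that crosses $\Sigma$ positively must cross it negatively somewhere, and this can only happen at the modified sector. That is the ``taut feature''.

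\textbf{Consequences for your case analysis.} This feature does not depend on whether the diagram has vertical arcs. That is why the paper invokes the lemma from the earlier papers with no new case, and your annulus case is not where any difficulty lies. The loop you suggest for it also does not work as described. Over the annulus sector $A$ the bundle is a product, and each leaf meets $\pi^{-1}(\mathrm{int}\,A)$ in separating sections. So winding around the core contributes nothing, and the loop can close up only by passing through the modified sector, which your description never uses.

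\textbf{Transverse to which lamination.} ``Transverse to the fibration by leaves'' is not a property of $\mathcal{B}$ alone. For your per-sector reduction to hold for every carried $\mathcal{L}$, the loops must be built only from moves that work for any $\mathcal{L}$:
\begin{itemize}
\item passing from a $2$-sheet sector into the $1$-sheet sector;
\item climbing from a $1$-sheet sector into the top sheet;
\item entering a bottom sheet from the very bottom of a fiber.
\end{itemize}
Descending into a bottom sheet after having climbed depends on where the cusps sit relative to the leaves of $\mathcal{L}$.
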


However, we still need to apply the laminar branched surface theory for the intermediate branched surfaces. This is done by the following lemma, where we place our branched surface in a suitable $3$-manifold so that it becomes ``essential'' there:

\begin{lem}[\cite{lyu2024knot}, Lemma 2.11]
  Let $\mathcal{B}$ be a co-oriented branched surface with boundary a union of circles. If $\mathcal{B}$ is sink disk free, not carrying any torus, and no component of the horizontal boundary of $N(\mathcal{B})$ is a disk or a sphere, then $\mathcal{B}$ fully carries a lamination.
  \label{lem:sk_disk_free}
\end{lem}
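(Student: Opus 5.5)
The plan is to deduce the statement from Li's theory of laminar branched surfaces~\cite{li2002laminar}: a laminar branched surface in a $3$-manifold fully carries a lamination. Nothing in Li's theorem requires the ambient manifold to be the one in which $\mathcal{B}$ originally sits. So I will build an auxiliary $3$-manifold $M$ containing $N(\mathcal{B})$ in which each laminarity condition either holds by construction or reduces to one of the three stated hypotheses. Since full carrying is intrinsic to $N(\mathcal{B})$, the conclusion then transfers back.

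\textbf{Step 1: the auxiliary manifold.} Let $M$ be $N(\mathcal{B})$ together with an exterior collar of $\partial N(\mathcal{B})$, so that $M-\mathrm{int}(N(\mathcal{B}))\cong \partial N(\mathcal{B})\times[0,1]$. The circle boundary of $\mathcal{B}$ is handled in one of two ways: either use Li's version for branched surfaces with boundary train tracks on $\partial M$, or first double $\mathcal{B}$ along its boundary circles and work with the closed double. Doubling preserves co-orientability and sink-disk-freeness, and it creates no disk components of the horizontal boundary.

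\textbf{Step 2: verify Li's conditions in $M$.}
\begin{enumerate}[(a)]
\item \emph{Incompressibility of $\partial_h N(\mathcal{B})$ in the complement.} The complement is a collar, so $\pi_1$ of each component of $\partial_h N(\mathcal{B})$ injects into it. A compressing disk would therefore have to be a disk component of $\partial_h N(\mathcal{B})$, which the hypothesis excludes.
\item \emph{No monogons.} Here I use the co-orientation. Each component of $\partial_v N(\mathcal{B})$ is an annulus running from the $+$ side to the $-$ side of $\partial_h N(\mathcal{B})$. A monogon would be a disk in the product collar whose boundary is one vertical arc plus one arc in $\partial_h$ returning to the same side. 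Using the product structure of the collar together with these sign considerations, this is impossible.
\item \emph{No Reeb branched surface and no torus or sphere obstructions.} A Reeb branched surface would carry a torus, which is excluded. A carried sphere, or a $D^2\times I$ region with disk horizontal boundary, is excluded by the assumption on components of $\partial_h N(\mathcal{B})$.
\item \emph{No sink disks.} This is exactly the sink-disk-free hypothesis.
\end{enumerate}

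\textbf{Step 3: conclude.} With $\mathcal{B}$ laminar in $M$, Li's theorem gives a lamination in $N(\mathcal{B})$ fully carried by $\mathcal{B}$. If we doubled, restrict it to the original half, using the symmetric splitting.

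I expect the main obstacle to be Step 2(b) together with the treatment of the boundary circles. One must check carefully that the collar construction, or the doubling, introduces no monogons and no new sink disks near $\partial\mathcal{B}$. The first thing I would try is to choose the doubling so that the cusp directions match across the boundary circles, and then verify directly that the branch sectors meeting $\partial\mathcal{B}$ acquire no sinks.
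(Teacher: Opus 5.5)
Your overall strategy is the one the paper uses: embed $N(\mathcal{B})$ in an auxiliary manifold where Li's laminarity conditions reduce to the stated hypotheses, then restrict. However, two steps fail as written.

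\textbf{The auxiliary manifold is not closed.} Li's theorem is stated for closed $3$-manifolds. Adding an exterior collar to $N(\mathcal{B})$ just gives a copy of $N(\mathcal{B})$, with boundary $\partial N(\mathcal{B})\times\{1\}$. Because you collar all of $\partial N(\mathcal{B})$, the circles $\partial\mathcal{B}$ do not even lie on $\partial M$, so the boundary-train-track version of Li's theorem does not apply to this $M$ either; that version also has its own hypotheses on $\partial M$.

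This part is repairable. Every component of $\partial N(\mathcal{B})$ has $\chi\le 0$, because a sphere component would have to contain a disk component of $\partial_h N(\mathcal{B})$. So, as the paper does, you can cap each component off with an irreducible $3$-manifold with incompressible boundary and get a closed manifold. Your checks (a) and (b) then go through. In (b) the co-orientation plays no role. The boundary of a monogon crosses the core of a vertical annulus exactly once, so it has intersection number $\pm1$ with a closed curve on the boundary surface. Hence it is not null-homotopic there and cannot bound a disk in a piece with incompressible boundary.

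\textbf{Doubling breaks the torus hypothesis.} This is the more serious gap. If $\mathcal{B}$ carries an annulus (resp.\ a disk) with boundary in $\partial\mathcal{B}$, the double carries a torus (resp.\ a sphere). So in Step 2(c) you can no longer rule out Reeb components by citing that $\mathcal{B}$ carries no torus.

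The missing idea is to close up $\mathcal{B}$ with pieces that cannot produce low-genus closed surfaces. The paper sews a once-punctured genus-$2$ surface onto each boundary circle. Any closed surface carried by the resulting $\mathcal{B}'$ that meets a sewn-in piece contains a parallel copy of it, so it has genus at least $2$. Consequently:
\begin{enumerate}[(i)]
\item every torus carried by $\mathcal{B}'$ is already carried by $\mathcal{B}$;
\item no sink disks are created, since the merged sectors have positive genus;
\item the horizontal boundary components touching the new pieces are neither disks nor spheres.
\end{enumerate}
With this closed $\mathcal{B}'$ placed in the capped-off closed manifold, all of Li's conditions hold. Restricting the resulting lamination to $N(\mathcal{B})$ gives the conclusion.
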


\begin{proof}
    We can sew in a punctured genus 2 surface to each circular boundary component to get a branched surface $\mathcal{B}'$ without boundary. Consider the regular neighborhood $N(\mathcal{B}')$. All components of $\partial N(\mathcal{B}')$ must have Euler characteristic $\leq 0$. It follows that we can cap these boundary components off by pasting some irreducible 3-manifolds with incompressible boundary. Now $\mathcal{B}'$ is laminar in the resulting closed 3-manifold, thus fully carries a lamination by~\cite{li2002laminar}. $\mathcal{B}$ as a subsurface also fully carries a lamination.
\end{proof}

Notice that one technical condition we imposed here is that the regular neighborhood of our branched surface does not have any sphere horizontal boundary components. For knots in $S^3$ and lens spaces this was easily confirmed using Lemma~\ref{lem:taut}. We show below that the exteriors of non-trivial $(1,1)$-knots in $S^1\times S^2$ do not contain non-separating spheres:

\begin{lem}
    Let $K\subset S^1\times S^2$ be a $(1,1)$-knot in $S^1\times S^2$ represented by a non-simple reduced $(1,1)$-diagram. Then $(S^1\times S^2 - K)$ does not contain a non-separating 2-sphere.
    \label{lem:non-sep_sphere}
\end{lem}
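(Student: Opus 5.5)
We argue by contradiction, suppose $S\subset S^1\times S^2-K$ is a non-separating 2-sphere, and aim to show that $K$ must then be the core knot or the local unknot. The latter two are excluded because the diagram is non-simple, using the same knot Floer argument as in the proof of Lemma~\ref{lem:meri_disk}. Since $S$ is non-separating in the exterior, we first check that it is also non-separating in $S^1\times S^2$. Otherwise $S$ would bound a ball on one side in $S^1\times S^2$ (the manifold is prime, so every separating sphere bounds a ball). If that ball misses $K$, then $S$ separates the exterior as well. If the ball contains $K$, then $S$ again separates the exterior into the punctured ball minus $K$ and the rest. Either way we contradict the assumption, so $S$ is non-separating in $S^1\times S^2$. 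Since every non-separating sphere in $S^1\times S^2$ is isotopic to $\{p\}\times S^2$, we may isotope $S$ to this standard sphere, carrying $K$ along with it.

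Now $K$ is disjoint from $\{p\}\times S^2$, so $K$ lies in the complement $(0,1)\times S^2$, which is a punctured ball. Hence $K$ is contained in a 3-ball in $S^1\times S^2$, and $K$ is nullhomologous. It remains to show that a $(1,1)$-knot with a non-simple reduced diagram cannot lie in a ball in this way, except possibly as a knot whose exterior is a connected sum with $S^1\times S^2$. The natural tool is again sutured Floer homology. If $K$ lies in a ball $B$, then
$$S^1\times S^2 - N(K) \cong (S^3 - N(K')) \# (S^1\times S^2)$$
for some knot $K'\subset S^3$. By the connected sum formula, $\widehat{HFK}$ of $K$ is then $\widehat{HFK}(K')\otimes \widehat{HF}(S^1\times S^2)$, which has rank $2\,\mathrm{rank}\,\widehat{HFK}(K')$ and is supported in a pattern with a particular symmetry.

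To obtain a contradiction, I would compare this with the $(1,1)$-diagram. The rank of $\widehat{HFK}(K)$ equals $|\alpha\cap\beta|$, with generators coming from intersection points. Since the relative Maslov and Alexander gradings are computable from the diagram, the key step is to show that the doubling coming from $\widehat{HF}(S^1\times S^2)\cong\mathbb{F}^2$ cannot occur. I expect this step to be the main obstacle. The cleaner route I would try first is topological. A non-simple reduced diagram gives a $(1,1)$-decomposition of $K$ in which each solid torus contains a single trivial arc. If $K$ lay in a ball, we could try to isotope the splitting sphere to intersect the Heegaard torus $\Sigma$ in essential curves parallel to $\alpha$ and $\beta$ (which are isotopic on $\Sigma$), disjoint from both arcs of $K$. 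Then $S\cap\Sigma$ would be a single curve of the common slope $\mu$ of $\alpha$ and $\beta$, disjoint from $z$ and $w$. A curve of slope $\mu$ bounding meridional disks on both sides and avoiding the bridge disks could then be isotoped off $\alpha\cup\beta$ in the two-pointed torus. This would force $\alpha$ and $\beta$ to be disjoint from it after isotopy in $\Sigma-\{z,w\}$, and hence, since the diagram is reduced, to have no bigons, contradicting non-simplicity.

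The technical part is the standard-position argument: making $S$ meet the two solid tori in meridional disks disjoint from the trivial arcs. For this I would use an innermost-disk argument with the bridge disks, exploiting the irreducibility of the solid torus minus a trivial arc.
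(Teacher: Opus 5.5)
Your reduction to the case that $K$ is local, i.e.\ $S^1\times S^2-K\cong (S^3-K')\#(S^1\times S^2)$, is correct and is exactly the paper's first step. The gap is everything after that. You never show that such a local $(1,1)$-knot must be trivial, and both of your routes stop at the decisive point.

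\textbf{The Floer route.} You leave this open yourself. Nothing in the diagram obviously forbids the doubling $\mathrm{rank}\,\widehat{HFK}(K)=2\,\mathrm{rank}\,\widehat{HFK}(K')$; for instance, $|\alpha\cap\beta|$ is always even in $S^1\times S^2$.

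\textbf{The topological route.} This rests on isotoping $S$ so that it meets each $U_i$ in a single meridian disk disjoint from the trivial arc $t_i$. That is a Haken-lemma-type statement for the $(1,1)$-position of the pair $(S^1\times S^2,K)$, and an innermost-disk argument does not give it. Innermost disks only remove curves of $S\cap\Sigma$ that are inessential in $\Sigma-\{z,w\}$. They do not turn planar components of $S\cap U_i$ into disks; that needs a genuine compression and $\partial$-compression argument with a decreasing complexity. As stated, this step is essentially as strong as the lemma itself. If you grant it, your final deduction is fine: a meridian disk of $U_i$ disjoint from a trivial arc is unique up to isotopy in the arc complement, so $\alpha\simeq\beta$ in $\Sigma-\{z,w\}$ and the reduced diagram is simple.

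\textbf{What the paper uses instead.} The missing idea is Heegaard genus. Since $K$ is a $(1,1)$-knot, its exterior has a genus-two Heegaard splitting. Hence the filling $S^3_0(K')\#(S^1\times S^2)$ has genus at most $2$. Haken's additivity of Heegaard genus then gives $g(S^3_0(K'))\le 1$, and Gabai's Property R forces $K'$ to be the unknot. So $K$ is the local unknot $U$.

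\textbf{Excluding the local unknot.} Your claim that $U$ is excluded ``by the same knot Floer argument as in Lemma~\ref{lem:meri_disk}'' is wrong. That argument works for the core knot only because its $\widehat{HFK}$ vanishes. $\widehat{HFK}(U)$ has rank $2$, so nonvanishing alone tells you nothing. The paper needs a further observation: the reduced, non-simple diagram in $S^1\times S^2$ with $|\alpha\cap\beta|=2$ is unique, namely $(2,1,0,1)$, and it represents a homologically essential knot, hence not $U$.

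Also, the core knot is irrelevant here, since it meets every non-separating sphere algebraically once.
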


\begin{proof}
    A non-separating sphere implies that the knot is \textit{local}; that is, there exists a knot $K'\subset S^3$ such that $S^1\times S^2-K=(S^3-K')\# (S^1\times S^2)$. Let $g(M)$ denote the Heegaard genus of $M$. Since $K$ is a $(1,1)$-knot, we know that $g(S^3_0(K')\#(S^1\times S^2))\leq 2$. Since the Heegaard genus is additive under connected sums (\cite{haken1968some}), we have $g(S^3_0(K'))\leq 1$. This implies that $K'$ is the unknot (\cite{gabai1987foliationsIII}), and $K$ is the local unknot in $S^1\times S^2$.

    It again remains for us to show that if $K\subset S^1\times S^2$ is represented by a non-simple reduced $(1,1)$-diagram $(\Sigma,\alpha,\beta,z,w)$, then it cannot be the local unknot $U$. We notice that $U$ has simple knot Floer homology, in that $\widehat{HFK}(U)$ has rank 2. On the other hand, there is a unique non-simple reduced $(1,1)$-diagram in $S^1\times S^2$ with $|\alpha\cap\beta|=2$, which is described as $(2,1,0,1)$ using the 4-tuple notation in~\cite{rasmussen2005knot}; in particular the corresponding knot is homologically essential and is not $U$. The lemma then follows.
\end{proof}

\vspace{6pt}

\textbf{Reeb torus.} Another technical condition we imposed in Lemma~\ref{lem:sk_disk_free} is that the branched surface does not carry any torus. This is actually stronger than needed. In fact, to use the laminar branched surface theory, we only need to guarantee that there is no \textit{Reeb torus}, that is, no torus bounding a solid torus in the 3-manifold. Since in the proof of Lemma~\ref{lem:sk_disk_free} the branched surface $\mathcal{B}'$ is obtained by sewing punctured genus 2 surfaces to the boundary of $\mathcal{B}$, any torus carried by $\mathcal{B}'$ is also carried by $\mathcal{B}$. Moreover, the regular neighborhood $N(\mathcal{B})$ embeds in the capped-off closed 3-manifold, so a Reeb torus there is also separating in $N(\mathcal{B})$. This contradicts Lemma~\ref{lem:taut}.

\subsection{Proof of the main theorem}

We can now prove Theorem~\ref{thm:S1S2}.

\begin{proof}[Proof of Theorem~\ref{thm:S1S2}]
    Suppose $K\subset S^1\times S^2$ is a $(1,1)$-knot and $K$ is not the local unknot. If $K$ is represented by a (reduced) coherent diagram, then by Proposition~\ref{prop:coherent} every non-trivial surgery along $K$ yields an L-space. If $K$ is not represented by a coherent diagram, then a reduced $(1,1)$-diagram representing it must be incoherent, and by Proposition~\ref{prop:incoherent} $K$ must be persistently foliar. According to~\cite{ozsvath2004holomorphic,bowden2016approximating,kazez2017c0}, this implies that non-trivial surgeries along $K$ cannot be L-spaces.
\end{proof}

\section{2-bridge links}
\label{sec:2bridge}

In this section, we discuss the L-space conjecture for surgeries on $2$-bridge links, which has recently been studied in detail by~\cite{santoro2026space}. Using $(1,1)$-knot theory, we recover some of the results in~\cite{santoro2026space} and translate the remaining open cases (\cite{santoro2026space}, Problem~9.23) into the $(1,1)$-knot framework. This is based on the following standard observation.

\begin{obs}
    \label{obs:2bridge}
    Performing Dehn surgery on one component of a 2-bridge link turns the other component into a $(1,1)$-knot. In fact, a $(1,1)$-diagram of the knot can be read directly off from the 2-bridge position.

    Identify $S^3$ with $\mathbb{R}^3\cup\{\infty\}$. Let $L$ be a 2-bridge link obtained by gluing 2 (properly embedded) rational tangles along $\{z=0\}\cup\{\infty\}$. Let $\gamma_1,\gamma_2$ be the two tangle strands living in $\{z\geq 0\}$, and $\delta_1,\delta_2$ be the two tangle strands living in $\{z\leq 0\}$, such that $K_1=\gamma_1\cup\delta_1$, $K_2=\gamma_2\cup\delta_2$ are two components of $L$.

    Suppose we perform some $r$-Dehn surgery on $K_1$. Consider the solid torus $T_1=(\{z\geq 0\}- \mathrm{int}( N(\gamma_1)))$, and its boundary torus $\Sigma=\partial T_1$. We claim that $\Sigma$ is a Heegaard torus of the resulting 3-manifold $S^3_{r}(K_1)$, and that $K_2$ is in $(1,1)$ position with respect to $\Sigma$. Indeed, $T_1$ is a solid torus and $K_2\cap T_1=\gamma_2$ is by definition a boundary parallel arc. On the other hand, $S^3_{r}(K_1)-T_1$ is obtained by gluing two solid tori $T_2=(\{z\leq 0\}-\mathrm{int}(N(\delta_1)))$ and $N_{r}(K_1)$ along an annulus $A=\overline{(\partial T_2)|_{\{z<0\}}}$, whose core $l$ is a meridian around $\delta_1$. Notice that $l$ generates the homology of $T_2$. If we think of $l$ as the vertical Seifert fibering, then $T_2$ has no singular fiber, while $N_{r}(K_1)$ may have one singular fiber. It follows that $S^3_{r}(K_1)-T_1=T_2\cup_AN_{r}(K_1)$ is a Seifert manifold with a disk orbifold and at most one singular fiber, and hence is a solid torus. Since $\delta_2$ is already boundary parallel in $T_2$ (where the isotopy to $\partial T_2$ could avoid $A$), it is still boundary parallel in $S^3_{r}(K_1)-T_1$. Hence $K_2$ is indeed in $(1,1)$ position with $\Sigma$.

    \begin{figure}[!hbt]
        \begin{overpic}[scale=0.4]{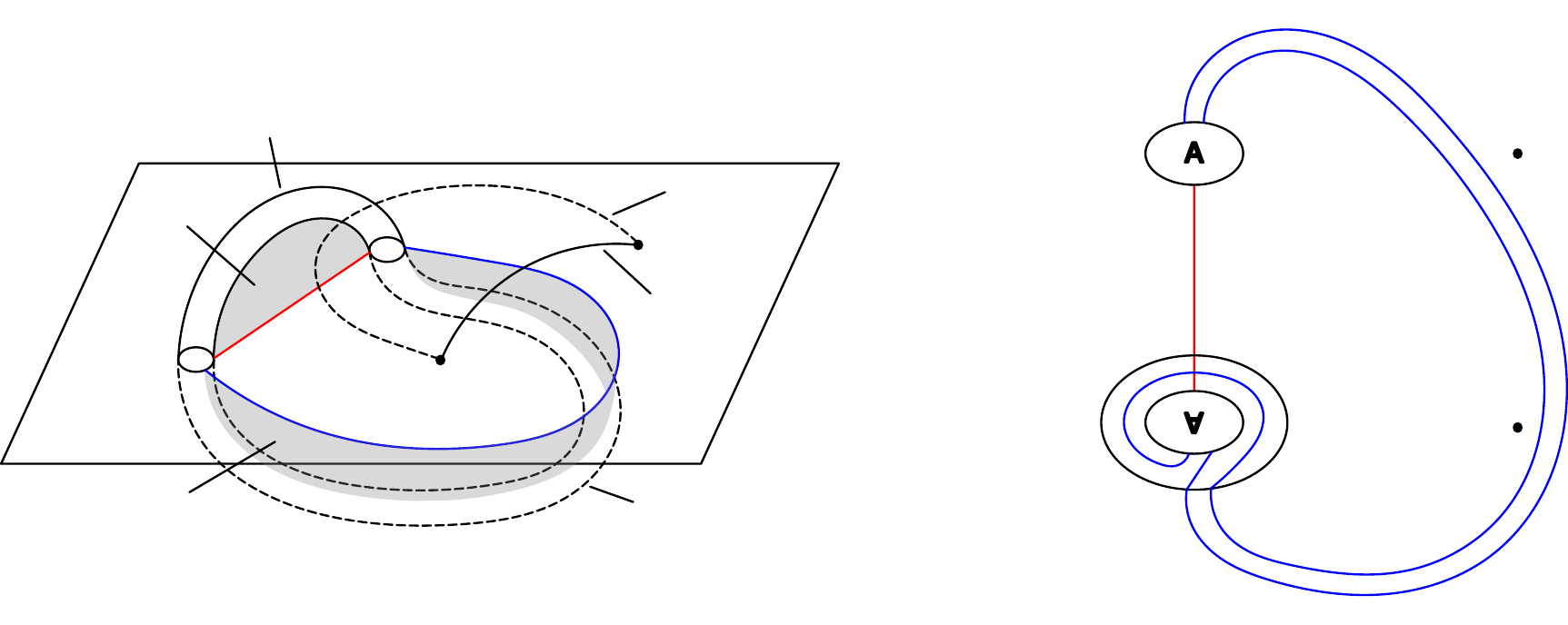}
            \put(9,6.5){$D_2$}
            \put(8,26){$D_1$}
            \put(15,32){$\gamma_1$}
            \put(42.6,27){$\delta_2$}
            \put(42,20){$\gamma_2$}
            \put(40.2,6){$\delta_1$}
            \put(8.8,15.5){\Small $B$}
            \put(25.8,24.5){\Small $A$}
            \put(26.7,15.4){\Small $z$}
            \put(41.2,24){\Small $w$}
            \put(1.4,11.2){\Small $z=0$}
            \put(95,12){\Small $z$}
            \put(97,30.7){\Small $w$}
            \put(68.8,8.8){\Small $B$}
            \put(76.5,21){\color{red} $\partial D_1$}
            \put(97,3.5){\color{blue} $\partial D'$}
        \end{overpic}
        \caption{At left: the Hopf link as the two-bridge link $b(2,1)$; at right: a $(1,1)$-diagram for $K_2$ after doing $\frac{1}{2}$-surgery on $K_1$.}
        \label{fig:2_bridge}
    \end{figure}

    We now discuss the $(1,1)$-diagram. Suppose the rational tangles are standard, i.e., the projections of the tangle strands onto the $\{z=0\}$ sphere are respectively injective, and their images on the 4-pointed sphere are in tight position (i.e. no trivial bigons). Then the projection of $\gamma_1$ onto $\{z=0\}$ gives a meridional disk $D_1$ of $T_1$ avoiding $\gamma_2$. Similarly, the projection of $\delta_1$ onto $\{z=0\}$ gives a meridional disk $D_2$ of $T_2$ avoiding $\delta_2$. See Figure~\ref{fig:2_bridge} (left). 
    
    To find a meridional disk of $S^3_{r}(K_1)-T_1=T_2\cup_AN_{r}(K_1)$ that avoids $\delta_2$, we first take a meridional disk $D$ of $N_{r}(K_1)$. Its boundary $\partial D$ intersects the annulus $A$ in parallel, coherently oriented arcs. We can arrange these arcs so that they are parallel to $\delta_1$. We can then attach parallel copies of $D_2$ to these arcs to get a meridional disk $D'$ of $S^3_{r}(K_1)-T_1=T_2\cup_AN_{r}(K_1)$. Suppose $K_2\cap\{z=0\}=\{z,w\}$. Then $(\Sigma,\partial D_1, \partial D', z,w)$ is a $(1,1)$-diagram for $K_2\subset S^3_{r}(K_1)$. Denote the two circle ends of the $\gamma_1$-tube as $A,B$. After cutting $\Sigma$ along $A$, we can read the $(1,1)$-diagram in the $\{z=0\}$ plane, see Figure~\ref{fig:2_bridge} (right).
\end{obs}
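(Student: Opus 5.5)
The statement to justify is Observation~\ref{obs:2bridge}: after $r$-surgery on $K_1$, the surface $\Sigma=\partial T_1$ is a Heegaard torus of $S^3_r(K_1)$, and $K_2$ sits in $(1,1)$ position with respect to it. I would first check that $T_1$ is a solid torus, then that its complement $S^3_r(K_1)-T_1$ is a solid torus, and finally that $K_2$ meets each side in a boundary-parallel arc. The diagram description would then follow by writing down explicit meridian disks.

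\emph{The side $T_1$.} The ball $\{z\ge 0\}$ contains the rational tangle $\gamma_1\cup\gamma_2$. Removing a neighborhood of the single unknotted arc $\gamma_1$ from a ball leaves a solid torus. A meridian disk $D_1$ is obtained from the disk swept out by projecting $\gamma_1$ onto $\{z=0\}$, since in standard position the strands project injectively. The other strand $\gamma_2$ is trivial in the rational tangle, and it remains boundary parallel in $T_1$ by an isotopy that avoids $D_1$.

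\emph{The complement.} This is the key step, and I expect it to be the main point needing care. Write
\[
S^3_r(K_1)-T_1 = T_2\cup_A N_r(K_1),
\]
where $T_2$ is the lower ball minus $N(\delta_1)$ and $A$ is the part of $\partial N(K_1)$ running along $\delta_1$. The core $l$ of $A$ is a meridian of $\delta_1$, so it generates $H_1(T_2)$ and is a longitude of $T_2$. Hence $T_2$ is $A\times I$-like relative to $A$: it is fibered by circles parallel to $l$ with no exceptional fiber. In $N_r(K_1)$, the curve $l$ is some curve on the boundary torus, so fibering $N_r(K_1)$ by curves parallel to $l$ gives at most one exceptional fiber. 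The union is therefore Seifert fibered over a disk (a disk glued to a disk along an arc) with at most one cone point, which makes it a solid torus.

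I would double-check the degenerate case where $l$ is a meridian of the surgery solid torus $N_r(K_1)$; this happens only when the slope $r$ equals the meridian of $K_1$, i.e.\ the trivial filling, which is excluded or harmless. Boundary parallelism of $\delta_2$ holds because its parallelism disk in the lower ball can be chosen disjoint from $\delta_1$ and from $A$.

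\emph{The diagram.} A meridian disk $D'$ of $T_2\cup_A N_r(K_1)$ is obtained by taking a meridian disk $D$ of $N_r(K_1)$, whose boundary crosses $A$ in coherently oriented arcs, and capping each arc with a parallel copy of the projection disk $D_2$ of $\delta_1$, which is disjoint from $\delta_2$. With basepoints at $K_2\cap\{z=0\}$, this yields the $(1,1)$-diagram $(\Sigma,\partial D_1,\partial D',z,w)$.
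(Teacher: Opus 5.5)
Your proposal is correct and follows the paper's argument essentially step for step: $T_1$ is a ball minus an unknotted arc, the complement $T_2\cup_A N_r(K_1)$ is shown to be a solid torus by Seifert fibering it with copies of the meridian $l$ of $\delta_1$, $\delta_2$ stays boundary parallel via a parallelism disk avoiding $A$, and $D'$ is a meridian disk of $N_r(K_1)$ capped off with parallel copies of $D_2$. Your observation that $(T_2,A)\cong(A\times I,A\times\{0\})$ already gives $T_2\cup_A N_r(K_1)\cong N_r(K_1)$ directly; this bypasses the Seifert-fibering step and also disposes of the degenerate meridional slope you flagged.
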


\begin{rem}
    It is clear that if we are doing the $\frac{m}{n}$-surgery on $K_1$, then there will be $n$ parallel, coherent copies of (projection images of) $\delta_1$ in $\partial D'$ in the $(1,1)$-diagram. $m$ is then further characterized by the twist in the $\gamma_1$-tube. See Figure~\ref{fig:2_bridge}. We note that an untwisted $\gamma_1$-tube does not always indicate the $0$-surgery on $K_1$, although this is the case for the Hopf link. In general, one needs to require the algebraic intersection of $\partial D_1$ and $\partial D'$ to be zero to identify the $0$-surgery (that is, the $(1,1)$-knot in $S^1\times S^2$).
\end{rem}

When working with $(1,1)$-diagrams, we can apply Theorem~\ref{thm:GLV} and Theorem~\ref{thm:S1S2} to find L-space surgeries. For example, we have:

\begin{cor}[cf.~\cite{santoro2026space}, Theorem 1.2]
    Let $L$ be the (unoriented) 2-bridge link $b(p,q)$ obtained by adding $1/0$ bridges to the $q/p$-rational tangle, where $p$ is even, $q$ is odd, $0<|q|\leq \frac{p}{2}$, and $p,q$ are coprime. Then $L$ admits non-trivial L-space surgeries if and only if $p\equiv \pm 1(\mathrm{mod}\;|q|)$.
\end{cor}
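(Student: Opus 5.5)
We will reduce the statement to Theorem~\ref{thm:GLV} and Theorem~\ref{thm:S1S2} through Observation~\ref{obs:2bridge}. Write $L=K_1\cup K_2$. Since $L$ is a two-component link, a non-trivial L-space surgery on $L$ can be produced by first performing an $r$-surgery on $K_1$ and then a surgery on $K_2$. By Observation~\ref{obs:2bridge}, $K_2$ is then a $(1,1)$-knot in $S^3_r(K_1)$. The manifold $S^3_r(K_1)$ is a lens space, $S^3$, or $S^1\times S^2$, because $K_1$ is an unknot. So the first step is to read off, for each slope $r=m/n$, the $(1,1)$-diagram $(\Sigma,\partial D_1,\partial D',z,w)$ from the standard $q/p$ tangle picture. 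In this diagram $\partial D_1$ is the projection of the bridge $\gamma_1$, and $\partial D'$ consists of $n$ parallel coherent copies of the projection of $\delta_1$, twisted $m$ times through the $\gamma_1$-tube. The next step is to put this diagram in reduced form.

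We then determine coherence in terms of $p,q$. The rainbow arcs of $\partial D'$ relative to $\partial D_1$ come from the way the $q/p$ arc on the 4-punctured sphere winds around the endpoints of $\gamma_1$. Changing $m$ only adds twisting in the tube, and changing $n$ only multiplies the strands coherently. So the orientation pattern of the rainbow arcs should depend only on the continued fraction of $q/p$, and essentially not on $r$. We will compute this pattern with the sink/source language of Definition~\ref{def:sink_source_parallel}. The target is to show that some slope gives a coherent reduced diagram if and only if $p\equiv\pm1 \pmod{|q|}$.

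The idea behind that condition is as follows. The arc of slope $q/p$ crosses the line through the $\gamma_1$ endpoints in a sequence of strands determined by $p \bmod |q|$. The rainbow arcs near one endpoint are forced to alternate in direction unless the strands wrap monotonically, and monotone wrapping occurs exactly when the remainder is $\pm1$. For the \emph{if} direction, we exhibit a slope, for instance $r=\pm1/n$ giving $S^3$ or a lens space, or the $0$-slope giving $S^1\times S^2$, where the reduced diagram is coherent. Theorem~\ref{thm:GLV} or Theorem~\ref{thm:S1S2} then yields an L-space surgery on $K_2$, and we also check that the resulting slope pair is non-trivial. For the \emph{only if} direction, assume an L-space surgery on $L$ exists. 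Either one surgery slope is $\infty$, which is excluded as trivial or handled separately, or the surgered $K_2$ in $S^3_r(K_1)$ is an L-space knot in the extended sense of Definition~\ref{defn:L-sp_knot}. By the two theorems its reduced diagram must then be coherent, or $K_2$ is the local unknot. Because $L$ is a non-split link with $p$ even, the local unknot can be excluded by linking considerations. Incoherence for every $r$ when $p\not\equiv\pm1$ then gives a contradiction.

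The main obstacle is the combinatorial bookkeeping. We must show that reducing the diagram (removing empty bigons created by the tube twisting) does not change the rainbow-orientation pattern, and that this pattern is incoherent for every $m/n$ exactly when $p\not\equiv\pm1 \pmod{|q|}$. We would first handle $n=1$ by an explicit picture in the $\{z=0\}$ plane, then argue that passing to $n$ coherent copies and adding tube twists preserves coherence or incoherence. The symmetry $q\mapsto -q$ (mirroring) lets us assume $q>0$ throughout.
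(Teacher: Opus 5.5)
Your outline follows the paper's route. You read a $(1,1)$-diagram for $K_2\subset S^3_r(K_1)$ off the bridge position, apply Theorems~\ref{thm:GLV} and~\ref{thm:S1S2}, and compare the signs of the rainbow arcs that the $q/p$ strand $\beta$ cuts off against $\overline{ab}$. However, the principle you rely on for the ``if'' direction is false: coherence does depend on $r$, and tube twists do not preserve it. In the untwisted integer diagram exactly one $\partial D'$-arc runs through the $\gamma_1$-tube; in the lift it joins $P_{p-2}P_p$ to $P_0P_2$. Twisting in one direction cuts this arc at its crossings with the tube part of $\partial D_1$. Two of the resulting pieces are new rainbow arcs, one on each side, and their signs need not match the others.

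The Whitehead link $b(8,3)$ shows this. It satisfies $8\equiv-1\pmod 3$ but has linking number $0$. So after $0$-surgery on $K_1$ the knot $K_2$ is null-homologous in $S^1\times S^2$, no surgery on it is a rational homology sphere, and its non-simple diagram must be incoherent. Likewise the two slopes $\pm1$ turn $K_2$ into a trefoil and a figure-eight knot. Hence your $0$-slope can fail, and ``$\pm1/n$'' works for at most one sign. Your ``linking considerations'' are also wrong, though harmlessly, since the local unknot has no L-space surgeries at all.

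Only \emph{incoherence} is slope-independent, because the $n$ coherently oriented copies of the sphere-part subarcs of $\beta$ appear for every $m/n$. That suffices for ``only if''. For ``if'', the paper chooses specifically the integer slope with untwisted tube. It then checks that the single tube arc is vertical: its endpoints have $\lfloor 2|q|/p\rfloor=0$ and $\lfloor (p-2)|q|/p\rfloor=|q|-1$, both even because $q$ is odd.

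The other load-bearing step is that rainbow arcs of both signs exist if and only if $p\not\equiv\pm1\pmod{|q|}$. You only assert this via ``monotone wrapping'', with no argument. The paper proves it by lifting $\beta$ to the segment from $(0,0)$ to $(p,q)$ in the checkerboard cover of the pillowcase. There, an interior crossing $(x,y)$ with $x$ even is positive iff $\lfloor |y|\rfloor$ is even. For $|q|<p/2$ this indexes the rainbow arcs by $t=1,\dots,|q|-1$, with sign given by the parity of $S_t=t+\lfloor pt/|q|\rfloor-1$. Write $p=k|q|+s$ with $0<s<|q|$. Then $S_t\equiv\lfloor st/|q|\rfloor-1$ for $k$ odd and $S_t\equiv\lfloor (s-|q|)t/|q|\rfloor-1$ for $k$ even. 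The parity is constant iff $s=1$, respectively $s=|q|-1$. Because this uses $|q|<p/2$, the Hopf link ($p=2$) has to be handled separately.
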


\begin{proof}
    If $|q|=\frac{p}{2}$, then $p=2$ and $q=\pm 1$, and the link is the Hopf link with non-trivial L-space surgeries. We henceforth suppose $|q|<\frac{p}{2}$. According to Theorem~\ref{thm:GLV} and Theorem~\ref{thm:S1S2}, $L$ has non-trivial L-space surgeries if and only if some of the $(1,1)$-diagrams in Observation~\ref{obs:2bridge} is coherent. To that end we consider the following model for rational tangles:

    Pick a unit square $abcd$. We can ``blow it up'' to get a bubbled square, which can then be identified with a 4-pointed sphere $(S,a,b,c,d)$ (see the first picture in Figure~\ref{fig:checkerbd}). We fix a frame so that the edges $\overline{ab}$ and $\overline{cd}$ of the bubbled square correspond to the $1/0$ tangle on $S$, while the edges $\overline{ad}$ and $\overline{bc}$ correspond to the $0/1$ tangle. Denote the front square $F$ and the back square $\bar{F}$. Notice that $S=F\cup \bar{F}$.

    There is a branched covering $\mathbb{R}^2\rightarrow S$, such that the preimages of $F$ and $\bar{F}$ are unit squares that form a checkerboard, where the preimage of $\overline{ab}$ is $\{x=2k,k\in\mathbb{Z}\}$, and the preimage of $\overline{cd}$ is $\{x=2k+1,k\in\mathbb{Z}\}$. See the top 2 pictures of Figure~\ref{fig:checkerbd}. A standard strand $\beta$ of the $q/p$ tangle with endpoints $a,b$ on $S$ can then be lifted to $\mathbb{R}^2$ as a segment $l_{pq}$ with endpoints $(0,0)$ and $(p,q)$. See the bottom 2 pictures of Figure~\ref{fig:checkerbd} for an example, where $p=18$ and $q=5$.

    \begin{figure}[!hbt]
        \begin{overpic}[scale=0.5]{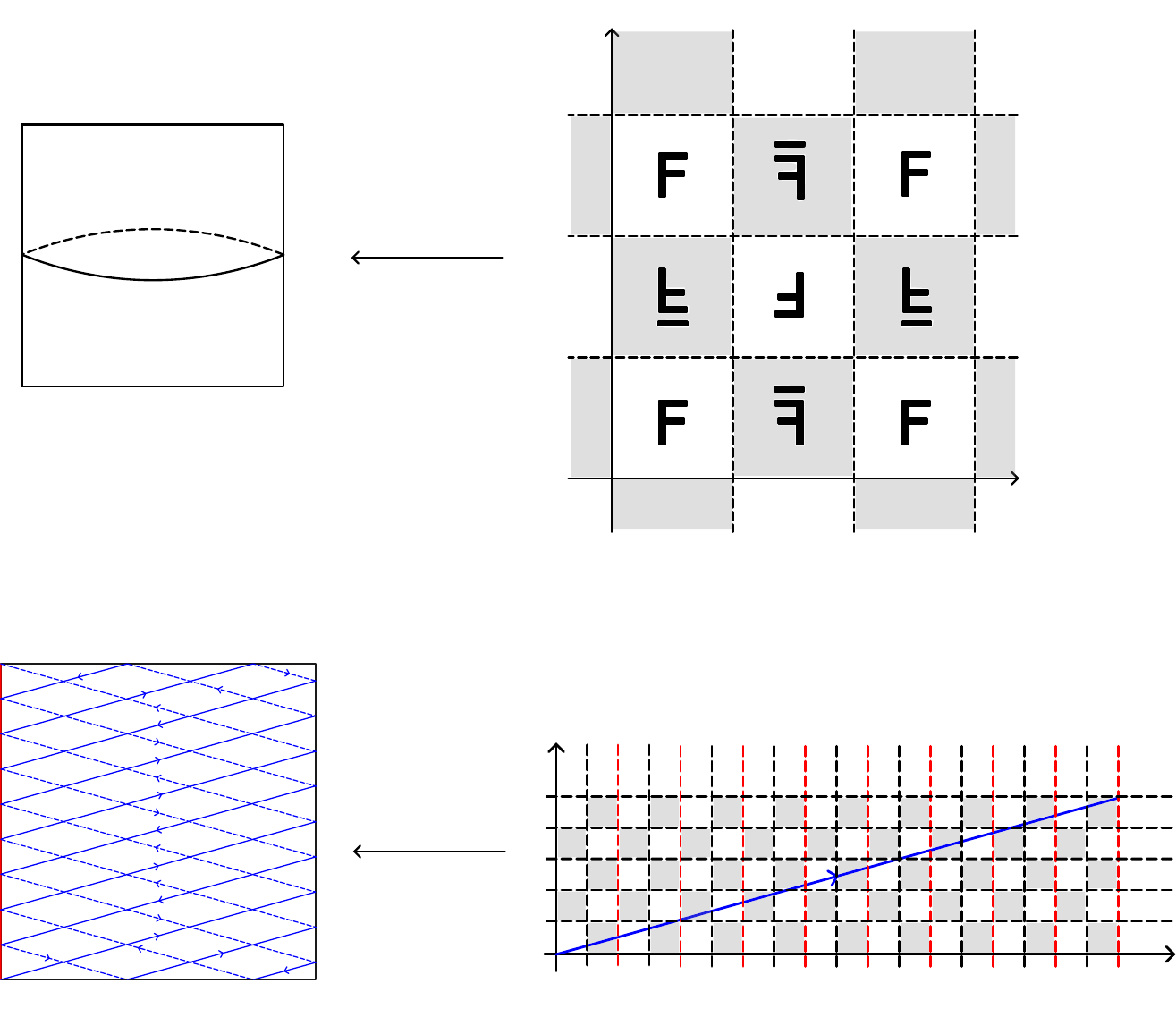}
            \put(1,76){$a$}
            \put(1,50){$b$}
            \put(24,51){$c$}
            \put(24,76){$d$}
            \put(12,47){$S$}
            \put(49,42){$O$}
            \put(52.5,55.7){\Small $a$}
            \put(52.5,45.5){\Small $b$}
            \put(62.6,45.5){\Small $c$}
            \put(62.6,55.7){\Small $d$}
            \put(52.5,76.2){\Small $a$}
            \put(52.5,66){\Small $b$}
            \put(62.6,66){\Small $c$}
            \put(62.6,76.2){\Small $d$}
            \put(73,55.7){\Small $a$}
            \put(73,45.5){\Small $b$}
            \put(83.2,45.5){\Small $c$}
            \put(83.2,55.7){\Small $d$}
            \put(73,76.2){\Small $a$}
            \put(73,66){\Small $b$}
            \put(83.2,66){\Small $c$}
            \put(83.2,76.2){\Small $d$}
            \put(-1,30){\small $a$}
            \put(-1,1){\small $b$}
            \put(27,1){\small $c$}
            \put(27,30){\small $d$}
            \put(27.5,20){\small \color{blue} $\beta$}
            \put(44.4,2){\small $O$}
            \put(95.3,19){\small $(18,5)$}
        \end{overpic}
        \caption{The checkerboard covering}
        \label{fig:checkerbd}
    \end{figure}

    The intersection points of $\beta$ and $\overline{ab}$ on $S$ are lifted to $\mathbb{R}^2$ as the intersection points of $l_{pq}$ and $\{x=2k,k\in\mathbb{Z}\}$. The sign of the intersection on $S$ is recorded on $\mathbb{R}^2$ as follows: for an intersection point $(x,y)\in \mathbb{R}^2$ in the interior of $l_{pq}$, the corresponding intersection back on $S$ is positive if and only if $\lfloor |y|\rfloor$ is even. The same rule applies to intersection points of $\beta$ and $\overline{cd}$.

    Notice that our standard strand $\beta$ is in tight position with $\overline{ab}$ on $S$. Now $\overline{ab}$ cuts $\beta$ into several subarcs. A \textit{rainbow} arc is one whose endpoints are in the interior of $\beta$, such that the signs of intersection at the two endpoints differ. Moreover, the sign of the rainbow arc can be understood as the sign of intersection of the rainbow arc and $\overline{cd}$ (they must intersect, for otherwise the rainbow arc would bound a trivial bigon with $\overline{ab}$). To guarantee an incoherent diagram, it suffices to find rainbow arcs of different signs.

    \vspace{6pt}
    
    \textbf{Claim.} There exist rainbow arcs of different signs if and only if $p\not\equiv\pm1(\mathrm{mod}\;|q|)$.

    \begin{proof}[Proof of the claim]
        We can lift this story to $\mathbb{R}^2$. The subarcs are lifted to the sub-segments of $l_{pq}$ cut out by $\{x=2k,k\in\mathbb{Z}\}$. For $1\leq i<p$, let $P_i=(i,\frac{iq}{p})$ be the intersection point of $l_{pq}$ and $\{x=i\}$. Moreover, for $P=(x,y)\in \mathbb{R}^2$, denote $f(P):=\lfloor |y|\rfloor(\mathrm{mod}\;2)$. A rainbow arc is then some segment $P_{2j}P_{2j+2}$ ($1\leq j\le \frac{p}{2}-2$) where $f(P_{2j})\neq f(P_{2j+2})$, and the sign is indicated by $f(P_{2j+1})$.

        Recall that we set $|q|<\frac{p}{2}$. Then we have $\lfloor \frac{(i+1)|q|}{p}\rfloor-\lfloor \frac{i|q|}{p}\rfloor\in \{0,1\}$ for $1\leq i<p$, and moreover $\lfloor \frac{(2j+2)|q|}{p}\rfloor-\lfloor \frac{2j|q|}{p}\rfloor\in \{0,1\}$ for $1\leq j\le \frac{p}{2}-2$. Now in this case the rainbow arcs are in 1-1 correspondence to the $j$'s such that $\lfloor \frac{(2j+2)|q|}{p}\rfloor-\lfloor \frac{2j|q|}{p}\rfloor=1$, and hence are in 1-1 correspondence to the $i$'s with $2\le i< p-2$ such that $\lfloor \frac{(i+1)|q|}{p}\rfloor-\lfloor \frac{i|q|}{p}\rfloor=1$. The sign and side of the rainbow arc can be read as follows: if $\lfloor \frac{i|q|}{p}\rfloor$ is even, then it corresponds to a rainbow arc on the positive side of $\overline{ab}$, where the rainbow arc is positive if and only if $i$ is odd; if $\lfloor \frac{i|q|}{p}\rfloor$ is odd, then it corresponds to a rainbow arc on the negative side of $\overline{ab}$, where the rainbow arc is positive if and only if $i$ is even. In particular the rainbow arc is positive if and only if ($i+\lfloor \frac{i|q|}{p}\rfloor$) is odd.

        Since $p,q$ are coprime, $\frac{i|q|}{p}\notin \mathbb{Z}$ for $1\leq i<p$. Suppose $\frac{i|q|}{p}<t< \frac{(i+1)|q|}{p}$ for some $t\in \mathbb{Z}$. Then $t=\lfloor \frac{i|q|}{p}\rfloor+1$ and $i=\lfloor \frac{pt}{|q|}\rfloor$. Denote $S_t:=t+\lfloor \frac{pt}{|q|}\rfloor-1=i+\lfloor \frac{i|q|}{p}\rfloor$. As $i$ ranges over $2\leq i<p-2$, the corresponding integer $t$ ranges over $1\le t\le |q|-1$. In particular, if $|q|=1$ then there are no such $t$, and in fact there are no rainbow arcs. From now on suppose $|q|\geq 3$.
        
        Suppose we have $p=m|q|+r$ for $m\in\mathbb{Z}^+$ and $0<r<|q|$. If $m$ is odd (then $r$ is odd), then $S_t\equiv \lfloor \frac{rt}{|q|}\rfloor-1(\mathrm{mod}\;2)$, and $S_t(\mathrm{mod}\;2)$ takes a single value if and only if $r=1$. If $m$ is even (then $r$ is even), then $S_t\equiv \lfloor \frac{rt}{|q|}\rfloor+t-1\equiv \lfloor \frac{(r-|q|)t}{|q|}\rfloor-1(\mathrm{mod}\;2)$, which takes a single value if and only if $r-|q|=-1$. It follows that there are rainbow arcs of different signs if and only if $p\not\equiv\pm1(\mathrm{mod}\;|q|)$.
    \end{proof}

    It follows from the claim that if $p\not\equiv\pm1(\mathrm{mod}\;|q|)$, then the $(1,1)$-diagrams in Observation~\ref{obs:2bridge} are all incoherent, regardless of the twist of the $\gamma_1$-tube; hence the corresponding 2-bridge link does not admit non-trivial L-space surgeries. On the other hand, if $p\equiv\pm1(\mathrm{mod}\;|q|)$, we can pick the integer surgery corresponding to the untwisted $\gamma_1$-tube and consider the $(1,1)$-diagram. To show that the diagram is coherent, it suffices to check that the $\partial D'$-arc passing through the $\gamma_1$-tube is not a rainbow arc (this is the only $\partial D'$-arc on the $(1,1)$-diagram that is not a subarc of $\beta$ we discussed before). On $\mathbb{R}^2$ we can regard this arc as obtained by joining the two sub-segments $P_0P_2$ and $P_{p-2}P_p$ at $P_0$ and $P_p$, with two endpoints $P_2$ and $P_{p-2}$. Since $q$ is odd, $f(P_2)=0$ and $f(P_{p-2})=|q|-1$ are both even. Hence back on $S$ the two intersections are of the same sign, and the $\partial D'$-arc is indeed not a rainbow arc. It follows that the $(1,1)$-diagram is coherent, and the corresponding knot admits non-trivial L-space surgeries. Hence the 2-bridge link also admits non-trivial L-space surgeries. This completes the proof.
\end{proof}

Once we have determined the L-space surgeries, the existence of taut foliations immediately follows from~\cite{lyu2025persistent} and Proposition~\ref{prop:incoherent}:

\begin{cor}[cf.~\cite{santoro2026space}, Theorem 1.2]
    For 2-bridge links without non-trivial L-space surgeries, in fact every non-trivial surgery admits a co-oriented taut foliation.
\end{cor}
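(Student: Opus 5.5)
We reduce the corollary to the two branches of the surgery picture in Observation~\ref{obs:2bridge}. Let $L=K_1\cup K_2$ be a 2-bridge link with no non-trivial L-space surgeries, and consider a non-trivial surgery $S^3_{r_1,r_2}(L)$. We view it as $r_2$-surgery on $K_2$ regarded as a $(1,1)$-knot in $Y=S^3_{r_1}(K_1)$, with the $(1,1)$-diagram $(\Sigma,\partial D_1,\partial D',z,w)$ from Observation~\ref{obs:2bridge}. If $r_1=\infty$, we instead use the symmetric description with $K_1$ and $K_2$ swapped. If both slopes are $\infty$, the surgery is trivial and there is nothing to prove. Thus we may assume $r_1\neq\infty$ and $r_2\neq\infty$ after possibly relabeling.

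\textbf{Step 1: the diagram is incoherent.} We first check that this reduced $(1,1)$-diagram is incoherent. By Theorem~\ref{thm:GLV} (when $Y$ is $S^3$ or a lens space) and Theorem~\ref{thm:S1S2} (when $Y\cong S^1\times S^2$, i.e.\ $r_1=0$ in the appropriate framing), a coherent reduced diagram would make $K_2$ an L-space knot. It would then have a non-trivial L-space surgery, contradicting the hypothesis on $L$.

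Two exceptions need care. First, $K_2$ could be the local unknot in $S^1\times S^2$. Then $L$ would be split, which is impossible for a 2-bridge link. We should also rule out simple diagrams; for the Hopf link, which is excluded anyway, these do occur. Second, the diagram might not be reduced. In that case we reduce it by isotopy, which does not change the knot, so the conclusion is unaffected.

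\textbf{Step 2: taut foliations for $Y$ an L-space.} When $Y$ is $S^3$ or a lens space, the incoherence from Step 1 feeds into the persistently foliar result of~\cite{lyu2025persistent}. This gives, for every slope $r_2$ other than the meridian, a co-oriented taut foliation of $Y-K_2$ meeting the boundary torus in circles of slope $r_2$. Capping off with meridian disks of the filling solid torus yields a co-oriented taut foliation of $S^3_{r_1,r_2}(L)$.

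\textbf{Step 3: taut foliations for $Y\cong S^1\times S^2$.} When $Y\cong S^1\times S^2$, we invoke Proposition~\ref{prop:incoherent} instead and conclude in the same way.

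\textbf{Remaining cases and main obstacle.} Two cases remain: $Y$ could be reducible or otherwise not of the three types above, and the slopes could involve $\infty$ on one component. Since $S^3_{r_1}(K_1)$ is surgery on an unknot, $Y$ is always $S^3$, a lens space, or $S^1\times S^2$, so these cases are harmless. The main obstacle is the bookkeeping needed to make sure that "no non-trivial L-space surgery on $L$" really forces incoherence for \emph{every} choice of $r_1$. In particular, we must confirm that "non-trivial surgery" on $L$ corresponds exactly to non-meridional slopes on $K_2$ in some $Y$, so that the foliation from Step 2 or Step 3 covers every non-trivial surgery.
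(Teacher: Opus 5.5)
Your argument is correct in substance, but it gets incoherence by a different route from the paper.

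\textbf{The two routes.} The paper argues combinatorially. By the preceding corollary, a $2$-bridge link without non-trivial L-space surgeries has $p\not\equiv\pm1 \pmod{|q|}$. The Claim in that proof then exhibits rainbow arcs of both signs on the standard strand, so every diagram $(\Sigma,\partial D_1,\partial D',z,w)$ is incoherent regardless of the twist in the $\gamma_1$-tube. Then \cite{lyu2025persistent} and Proposition~\ref{prop:incoherent} give persistent foliarity of $K_2\subset S^3_{r_1}(K_1)$ for every finite $r_1$.

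You instead derive incoherence directly from the hypothesis. You use only the implication ``coherent $\Rightarrow$ L-space knot'' in Theorems~\ref{thm:GLV} and~\ref{thm:S1S2}, together with the fact that a non-meridional L-space surgery on $K_2\subset S^3_{r_1}(K_1)$ is a non-trivial L-space surgery on $L$. This is cleaner and needs no number theory: it works for any link with an unknotted component whose partner stays $(1,1)$ after every filling. It also already disposes of your ``main obstacle'', since $\partial N(K_2)$ and its slopes are the same in $S^3$ and in $S^3_{r_1}(K_1)$, with the meridian being $r_2=\infty$. The price is that you must face the local-unknot exception of Theorem~\ref{thm:S1S2}. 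The paper's route avoids it, because an incoherent diagram is automatically non-simple and Proposition~\ref{prop:incoherent} has no exception.

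\textbf{Two points need repair.}

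First, ``$K_2$ is the local unknot in $S^3_0(K_1)$ implies $L$ is split'' is true but not immediate. A disk bounded by $K_2$ may pass through the surgery solid torus, and the case matters: the surgeries $S^3_{0,r_2}(L)$ would then be reducible. Here is one way to close it.
\begin{itemize}
\item If $\mathrm{lk}(K_1,K_2)\neq 0$, then $K_2$ is homologically essential in $S^1\times S^2$.
\item If $\mathrm{lk}=0$, take a minimal genus Seifert surface $F$ for $K_2$ in $S^3\setminus K_1$; it has genus at least $1$ because $L$ is non-split. By Gabai's theorem (Foliations and the topology of $3$-manifolds II), $F$ stays norm-minimizing under all but at most one filling of $\partial N(K_1)$. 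The slope $\infty$ is already that exception, since $K_2$ is unknotted. So $K_2$ has positive Thurston norm in $S^3_0(K_1)$ and cannot bound a disk there.
\end{itemize}
Alternatively, check that the diagram is non-simple and use the Floer-homology argument in the proof of Lemma~\ref{lem:non-sep_sphere}.

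Second, your reduction of the slopes is wrong as written. Relabeling cannot make both slopes finite when exactly one is $\infty$. Such surgeries are surgeries on an unknot, giving $S^3$, lens spaces, or $S^1\times S^2$, and the first two carry no co-oriented taut foliation, so these cases are not ``harmless''. ``Non-trivial'' has to mean both slopes finite. Otherwise $(1,\infty)$ would be a non-trivial L-space surgery on every $2$-bridge link, and the statement would be vacuous.
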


For 2-bridge links with L-space surgeries, it is possible to determine the 2-bridge L-space links (these correspond to the case $p\equiv -1(\mathrm{mod}\;|q|)$, where only \textit{one direction} of the twisted $\gamma_1$-tubes gives incoherent diagrams), but determining the L-space surgery set of the link boils down to verifying whether certain surgeries give an L-space or calculating the Turaev torsions for the $(1,1)$-knot complements, as Santoro-Zhou did in~\cite{santoro2026space}. The existence problem of co-oriented taut foliations would be completely resolved by the following conjecture for $(1,1)$-knots:

\begin{conj}[cf. the L-space conjecture]
    A Dehn surgery on a $(1,1)$ L-space knot admits a co-oriented taut foliation if and only if it is not an L-space.
    \label{conj:11Lsp}
\end{conj}

It is worth noting that Conjecture~\ref{conj:11Lsp} is verified for $(1,1)$ L-space knots in $S^3$ (\cite{nie2021explicit,nie2026positive}) and $S^1\times S^2$ (\cite{rasmussen2017floer}). However, it remains open for $(1,1)$ L-space knots in lens spaces.

\appendix
\section{Primitive diagrams, branched surfaces, and laminations}
\label{app:detail}

\subsection{Background}
\label{appsec:backgr}

In this subsection, we review the necessary background from~\cite{lyu2024knot,lyu2025persistent} for later use.

\vspace{6pt}

\textbf{Heegaard branched surfaces.} We first review the definitions of our ``Heegaard'' branched surfaces used to construct taut foliations. The name comes from the fact that they are constructed from Heegaard diagrams.

Let $(\Sigma,\alpha,\beta,z,w)$ be a reduced $(1,1)$-diagram defining a $(1,1)$-knot $K$ (in $S^1\times S^2$). Pick the Heegaard torus $\Sigma$ and attach 2 disks to different sides of $\Sigma$ along $\alpha$ and $\beta$ separately. Then remove two small open disks at the 2 basepoints $z,w$. Call the resulting 2-complex $B$. It is worth noticing that $B$ embeds in the knot complement, and in fact the regular neighborhood of $B$ is homeomorphic to the knot complement.

We still need to smooth $B$ along $\alpha$ and $\beta$ to get a branched surface. Fixing an orientation of $\Sigma$, we can further orient the curves $\alpha,\beta$. For a choice of the orientations of $(\alpha,\beta)$, we can smooth $B$ accordingly, such that the branch direction always points to the \textit{left} of the curves on $\Sigma$. By the symmetry from the hyperelliptic involution, there are essentially 2 different smoothings.

\begin{defn}[cf.~\cite{lyu2024knot}, Definition 2.13]
    Suppose $(\Sigma,\alpha,\beta,z,w)$ is non-simple. Pick orientations of $\alpha,\beta$ such that on $\Sigma$ the (innermost) bigon containing $w$ has clockwise boundary. The corresponding smoothing of $B$ would give a branched surface where this (punctured) bigon becomes a \textit{source sector}. We can then remove this source sector. The resulting branched surface $\mathcal{B}$ is called the \textit{(Heegaard) branched surface associated to $(\Sigma,\alpha,\beta,z,w)$}.
    \label{def:asso_br}
\end{defn}

\begin{defn}[cf.~\cite{lyu2025persistent}, Definition 3.1]
    Suppose $(\Sigma,\alpha,\beta,z,w)$ is incoherent. Pick orientations of $\alpha,\beta$ so that on $\Sigma$ the innermost bigon containing $w$ is $\alpha$-source while $\beta$-sink (recall the definitions in~\ref{def:sink_source_parallel}). The incoherence of the diagram then guarantees that there exist some quadrilateral sector(s) with clockwise boundary, indicating that they are source sectors in the branched surface after smoothing. Let $T$ be any such quadrilateral source sector. We can then reverse the co-orientation of $T$ to get a new branched surface $\mathcal{B}_T$. This branched surface $\mathcal{B}_T$ is called the \textit{type I modified Heegaard branched surface associated to $T$}.
    \label{def:modified_br}
\end{defn}

\vspace{6pt}

\textbf{Sink tube push.} We now review the core splitting technique in~\cite{lyu2024knot,lyu2025persistent}. We will try to summarize it in a more general setting with some abuse of notation (which indicates the prototypes of these definitions).

Let $\mathcal{B}$ be a branched surface and $\Sigma\subset N(\mathcal{B})$ be a compact \textit{subsurface}\footnote{Here one should think of $\mathcal{B}$ as the branched surface obtained by directly smoothing the $2$-complex $B$, which still carries the (punctured) Heegaard torus $\Sigma$.}. The branch locus on $\Sigma$ divides $\Sigma$ into several branch sectors. For our purpose suppose the branch locus on $\Sigma$ consists of $2$ immersed curves $\alpha,\beta$, and we orient them so that branch directions always point to the left. Then we can define sink, source, and parallel arcs and sectors as in Definition~\ref{def:sink_source_parallel}. We remark that with our conventions a quadrilateral sector is a sink disk of the branched surface if and only if it is both $\alpha$-sink and $\beta$-sink as in Definition~\ref{def:sink_source_parallel}.

To define a sink tube push, we need a well-defined sink tube. Recall that a $\beta$-sink tube is defined when among the boundary $\alpha$-arcs of the non-quadrilateral sectors, there are exactly $2$ that are $\beta$-sink. Moreover, such a $\beta$-sink tube necessarily contains all the $\beta$-sink (quadrilateral) sectors. In particular, it contains all the quadrilateral sink disk candidates on $\Sigma$. The idea of the sink tube push is to destroy all such candidates with a single splitting.

\begin{figure}[!hbt]
  \begin{overpic}[scale=0.7]{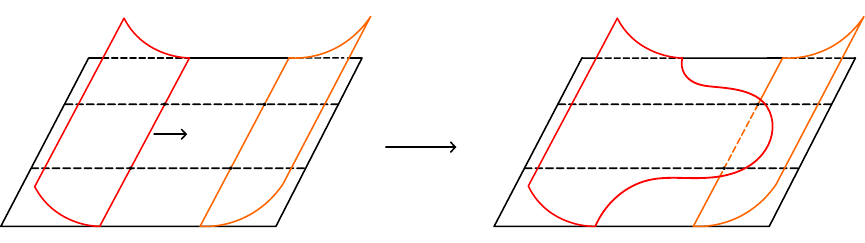}
      \put(14,11){$\color{red} \alpha$}
      \put(26,11){$\color{orange} \gamma$}
      \put(83,11){$\color{orange} \gamma$}
      \put(33,-1){$\Sigma$}
  \end{overpic}
  \caption{Pushing the $\alpha$-arc onto the $\gamma$-arc}
  \label{fig:pushing_arcs}
\end{figure}

We can assume that our splittings do not destroy $\Sigma$, that is, the branched surface after splitting still carries $\Sigma$. The advantage is that we can then describe the 3-dimensional splittings as some 2-dimensional operations on $\Sigma$ which we call ``pushing arcs''. See Figure~\ref{fig:pushing_arcs}. When observed on $\Sigma$, it looks like the $\alpha$-arc is pushed over the $\gamma$-arc onto the sector bounded by $\gamma$ (thus no longer lives on $\Sigma$), and we say that the $\alpha$-arc is \textit{pushed onto} the $\gamma$-arc. A $\beta$-sink tube push is then pushing one long boundary $\beta$-arc of the sink tube onto the other.

Suppose $\mathcal{B}'$ is some branched surface modified from $\mathcal{B}$ (by removing or reversing a source sector on $\Sigma$). As long as we are not moving the boundary of the modified source sector, we can still talk about ``pushing arcs'' on $\Sigma$. When exactly one long boundary $\beta$-arc of the $\beta$-sink tube intersects the modified source sector, the sink tube push is necessarily defined by pushing the other long $\beta$-arc onto this one. This is the ``sink tube push'' defined in~\cite{lyu2024knot}, Subsection 3.4 or~\cite{lyu2025persistent}, Subsection 6.3. 

\subsection{Primitive diagrams in $S^1\times S^2$}
\label{appsec:primitive}

In this subsection we prove that the branched surfaces we constructed for primitive $(1,1)$-diagrams in $S^1\times S^2$ fully carry laminations.

\begin{prop}
    Let $(\Sigma,\alpha,\beta,z,w)$ be a reduced, non-simple, and primitive diagram in $S^1\times S^2$. Then the branched surface associated to it (as in Definition~\ref{def:asso_br}) fully carries a lamination.
    \label{prop:asso_primi}
\end{prop}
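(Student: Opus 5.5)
The plan is to reduce to Lemma~\ref{lem:sk_disk_free} after a single splitting, in the same way as for primitive diagrams in $S^3$ and lens spaces, but with the spiral-shaped sink tube replaced by the tube structure available when there are no vertical arcs. The associated branched surface $\mathcal{B}$ is co-oriented and has circle boundary at the basepoint punctures. The conditions of Lemma~\ref{lem:sk_disk_free} that concern tori and horizontal boundary will come from Subsection~\ref{subsec:incomp}. Any lamination carried by $\mathcal{B}$ is taut by Lemma~\ref{lem:taut}, so a carried torus cannot bound a solid torus; the discussion of Reeb tori then shows this weaker condition suffices. Disk components of $\partial_h N(\mathcal{B})$ are excluded by Lemma~\ref{lem:meri_disk}, because the meridional cusps and boundary circles would otherwise produce a meridional disk. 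Sphere components are excluded by Lemma~\ref{lem:non-sep_sphere} together with Lemma~\ref{lem:taut}. So the real work is to split $\mathcal{B}$ into a branched surface $\mathcal{B}'$ with no sink disk, while checking that these properties survive the splitting.

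First I will describe the diagram concretely. By Lemma~\ref{lem:char_sink_tube}, with no vertical arcs there are no parallel arcs or sectors. Hence $\Sigma-(\alpha\cup\beta)$ consists of the annulus, the two bigons, the $\beta$-sink tube and the $\beta$-source tube, each tube joining its bigon to the annulus. Exchanging the roles of $\alpha$ and $\beta$ gives the analogous $\alpha$-tubes. The quadrilateral sink disk candidates are exactly the quadrilaterals that are both $\alpha$-sink and $\beta$-sink, so all of them lie in the $\beta$-sink tube. Moreover, the source bigon containing $w$ has been removed from $\mathcal{B}$, so the only non-quadrilateral sectors are the annulus, which is not a disk, and the sink bigon, which contains the puncture $z$ and so has a boundary circle. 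Neither of these is a sink disk.

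Second, I will perform the $\beta$-sink tube push. This means pushing one long boundary $\beta$-arc of the sink tube onto the other, choosing the one that meets the boundary of the removed source bigon, as in Subsection~\ref{appsec:backgr}. I then need to check that every quadrilateral in the sink tube acquires a boundary arc across which the branch direction points outward. In the spiral case this is immediate. Here the tube runs from the sink bigon to the annulus and the diagram is governed by a rational tangle, so I expect the tube to be a straight chain of quadrilaterals with alternating rainbow structure. Pushing along the whole length should then break every candidate and only create new sectors that are adjacent either to the annulus or to the punctured sink bigon. If a single push leaves residual sink disks, I would iterate: apply the push to the $\alpha$-sink tube as well, or induct on the continued-fraction length of the tangle using the same push repeatedly.

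The main obstacle is this second step. I have to verify that the push is well defined when the two bigons are far apart, since the long $\beta$-arcs of the sink tube need not both meet the removed sector. I also have to verify that the push produces no new sink disks near the annulus end of the tube. I would handle both points using the hyperelliptic symmetry between the sink and source tubes and an explicit local picture at the two ends of the tube. Once $\mathcal{B}'$ is sink disk free, Lemma~\ref{lem:sk_disk_free} shows that $\mathcal{B}'$ fully carries a lamination. Since $\mathcal{B}'$ is a splitting of $\mathcal{B}$, the same lamination is fully carried by $\mathcal{B}$, which proves the proposition.
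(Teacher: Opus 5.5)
There is a genuine gap. Your reduction to Lemma~\ref{lem:sk_disk_free}, your check of its hypotheses through Subsection~\ref{subsec:incomp}, and your description of $\Sigma-(\alpha\cup\beta)$ are all consistent with the paper. But the step that carries the proposition, producing a sink disk free splitting, is not supplied, and your primary plan for it does not work. In $S^1\times S^2$ the two bigons need not share an endpoint, so the sink tube is not a spiral and a single $\beta$-sink tube push does not suffice. After that push, what remains of $\beta$ on $\Sigma$ is a small circle around $z$, a circle parallel to the core $\delta$ of the annulus, and an arc joining them. This arc is a $p/q$ rational tangle strand (e.g.\ $3/5$ for $(10,5,0,3)$). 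In general it still crosses $\alpha$ repeatedly and cuts out new disk sectors that are not adjacent to the annulus or to the $z$-sector, and these can be sink disks. So your expectation that the push ``only creates new sectors adjacent to the annulus or to the punctured sink bigon'' is unjustified. The difficulty is global, and the tools you propose for your ``main obstacle'' (hyperelliptic symmetry, local pictures at the two tube ends) do not address it.

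What is missing is a precise version of your fallback ``induct on the continued-fraction length.'' The paper obtains it as follows.
\begin{enumerate}
\item Formally cut $\Sigma$ along $\delta$ and collapse the two resulting circles to points $\delta_z,\delta_w$. This gives a 4-pointed sphere $(S,z,w,\delta_z,\delta_w)$ on which $\alpha$ is the $1/0$ curve and $\beta$ is a $p/q$ curve. Splittings are described as arc pushes on $S$ that never cross $\delta$ and never move the boundary of the $w$-bigon.
\item After the first push, collapse the two small $\beta$-circles onto the basepoints they bound in the sink direction. Then collapse $\alpha$ in its source direction onto the arc $w\delta_w$; no sink disk candidate is lost. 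The boundary $\beta$-arc of the removed $w$-bigon becomes a single point $e$.
\item You are now exactly in the two-tangle-strand situation of \cite{lyu2024knot}, Subsection~3.5, where recursive sink tube pushes fixing $e$ remove all sink disks.
\item You must still check the sectors coming from the collapsed pointed disks, and the sector containing $\delta$. The latter remains an annulus only because no push crosses $\delta$, a constraint your proposal never imposes.
\end{enumerate}
Note also that, as written, you seem to push the long arc that meets the removed bigon. The convention in Subsection~\ref{appsec:backgr} is the reverse: push the other arc onto it, since that sector's boundary must stay fixed.
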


\begin{proof}
    Recall our conventions that $w$ is contained in the (removed) source bigon. Suppose we put $\alpha$ in the standard horizontal position. Recall Subsection~\ref{subsec:diadecomp} that $(\Sigma,\alpha,\beta,z,w)$ has no vertical arcs, and there is an annulus sector in $\Sigma-(\alpha\cup\beta)$. Let $\delta$ be the core curve of the annulus sector. We can formally cut $\Sigma$ along $\delta$\footnote{This is just to simplify the models on which we define our splittings; we do not actually cut the branched surface along $\delta$.} and get an annulus with two boundary circles $\delta_z,\delta_w$ (where we suppose $z$ is contained in the annulus bounded by $\delta_z$ and $\alpha$). We can then collapse $\delta_z$ and $\delta_w$ into two points and get a 4-pointed sphere $(S,z,w,\delta_z,\delta_w)$, see the first two pictures of Figure~\ref{fig:asso_primi_red}. On the sphere $S$, $\alpha$ becomes an essential curve separating $z,\delta_z$ and $w,\delta_w$, and $\beta$ also becomes an essential curve separating $z$ and $w$. We can still describe certain splittings of the branched surface as pushing the $\beta$-arcs on $(S,z,w,\delta_z,\delta_w)$, as long as we do not move the boundary of the bigon containing $w$. In fact, by reducing from the torus $\Sigma$ to the sphere $S$ we actually further require that we do not push arcs across $\delta$.

    The reason for reducing $\Sigma$ to $S$ is that curves on the 4-pointed sphere $S$ are better characterized: each essential curve separates two strands of a rational tangle, and is characterized by a rational number (with a fixed frame). Here we fix a frame so that the $\alpha$ curve corresponds to $\frac{1}{0}$. Since the original $(1,1)$-diagram is reduced, $\alpha,\beta$ are still in tight position on the 4-pointed sphere $S$, and $\beta$ is then \textit{characterized by} a rational number $\frac{p}{q}$. By carefully choosing the frame, we may assume that $0\leq \frac{p}{q}\leq 1$. Again see the top right picture of Figure~\ref{fig:asso_primi_red}, where we pick the horizontal edges of the square to represent $\frac{0}{1}$, and $\beta$ corresponds to $\frac{3}{5}$ in this frame.

    \begin{figure}[!hbt]
        \begin{overpic}[scale=0.5]{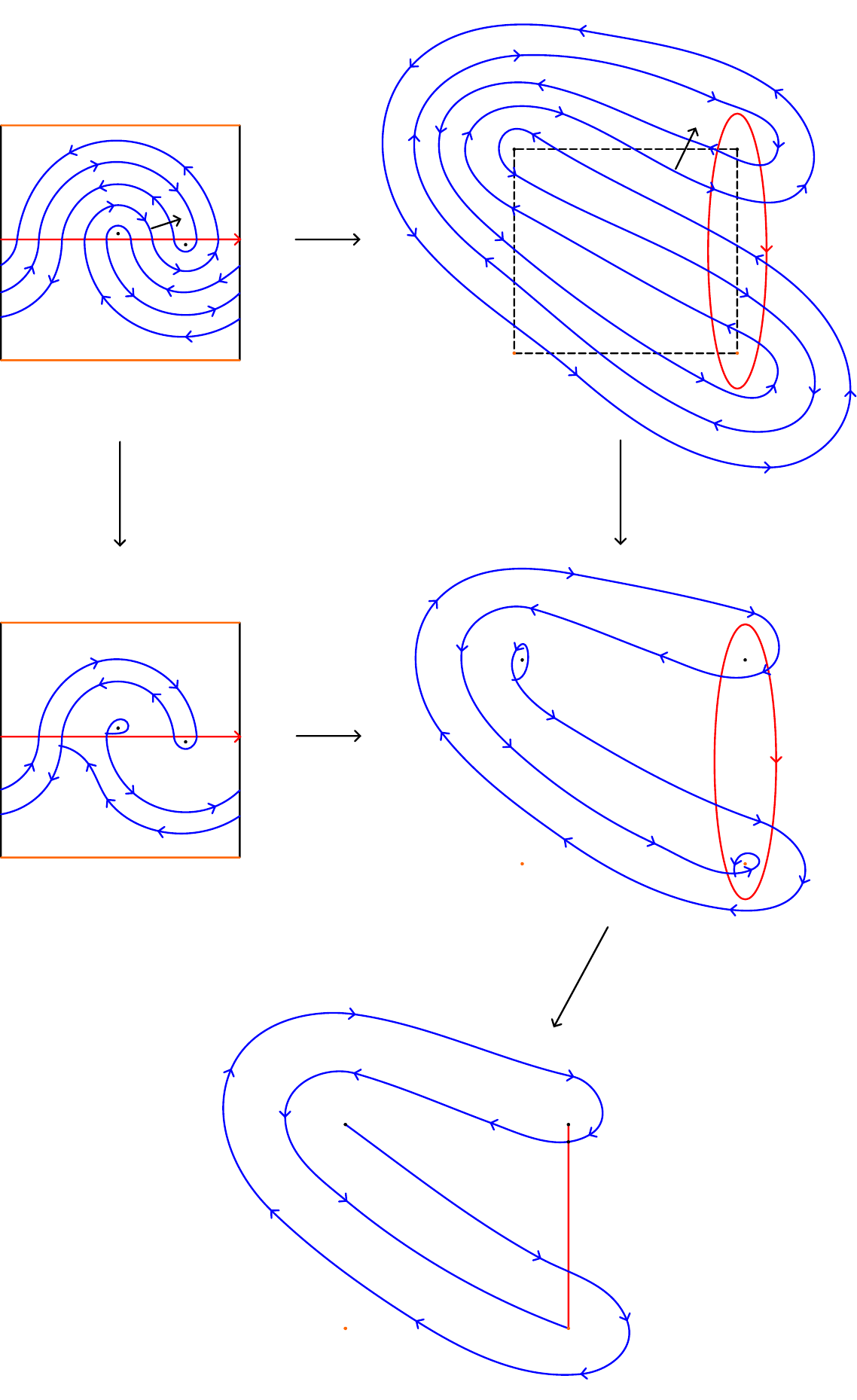}
            \put(8,82.3){\Tiny $z$}
            \put(12.6,83){\Tiny $w$}
            \put(10,73.2){\Tiny \color{orange}$\delta_w$}
            \put(10,91.5){\Tiny \color{orange}$\delta_z$}
            \put(17.2,83){\Tiny \color{red}$\alpha$}
            \put(7.6,76.7){\Tiny \color{blue}$\beta$}
            \put(19,81.7){\Tiny collapsing $\delta$'s}
            \put(37,88.6){\Tiny $z$}
            \put(36.5,73.3){\Tiny \color{orange}$\delta_z$}
            \put(52,89.8){\Tiny $w$}
            \put(51.8,73.5){\Tiny \color{orange}$\delta_w$}
            \put(9.2,64.5){\Tiny sink tube push}
            \put(45,64.5){\Tiny sink tube push}
            \put(7,48){\Tiny $z$}
            \put(12.6,47.5){\Tiny $w$}
            \put(10,37.7){\Tiny \color{orange}$\delta_w$}
            \put(10,56){\Tiny \color{orange}$\delta_z$}
            \put(17.2,47.5){\Tiny \color{red}$\alpha$}
            \put(7.6,41.2){\Tiny \color{blue}$\beta$}
            \put(19,46.2){\Tiny collapsing $\delta$'s}
            \put(38,52){\Tiny $z$}
            \put(37.3,37.2){\Tiny \color{orange}$\delta_z$}
            \put(53,53.3){\Tiny $w$}
            \put(52.4,36.7){\Tiny \color{orange}$\delta_w$}
            \put(42,30){\Tiny collapsing small circles and $\alpha$}
            \put(25,19.8){\Tiny $z$}
            \put(25,5.3){\Tiny \color{orange}$\delta_z$}
            \put(40.5,20.1){\Tiny $w$}
            \put(40.8,4.7){\Tiny \color{orange}$\delta_w$}
            \put(40.8,17.5){\Tiny $e$}
        \end{overpic}
        \caption{Rational tangle strand from knot (10,5,0,3)}
        \label{fig:asso_primi_red}
    \end{figure}

    The ($\beta$-)sink tube push on $\Sigma$ also descends to $S$, see the first 4 pictures of Figure~\ref{fig:asso_primi_red}, where the black arrows in the first 2 pictures indicate the direction of the splitting. Recall that $z$ is in the sink direction of $\beta$. After the sink tube push, the remaining $\beta$-curves on $\Sigma$ consist of a small circle containing $z$, a circle parallel to $\delta$, and an arc connecting the two circles. It follows that on $S$ the remaining $\beta$-curves consist of 2 small circles, each bounding a basepoint in the sink direction, and an arc connecting the two circles, characterized as a strand of the rational tangle $\frac{p}{q}$, see the middle right picture of Figure~\ref{fig:asso_primi_red}.

    Since the two small circles each bound a small disk with a basepoint inside in the sink direction, we can collapse them to the basepoints inside respectively. For simplicity, we can also collapse the $\alpha$-curve on $S$ in its source direction to the arc $w\delta_w$. The boundary $\beta$-arc of the (removed) source bigon is collapsed to a single intersection point $e$ next to $w$. See the last picture of Figure~\ref{fig:asso_primi_red}. Notice that no sink disk candidate is collapsed when collapsing $\alpha$ in its source direction.

    The situation is now essentially the same as that in~\cite{lyu2024knot}, Subsection~3.5, where $\alpha,\beta$ were reduced to two rational tangle strands, and we defined recursive sink tube pushes fixing the intersection point $e$ to eliminate the sink disks. The same recursive argument applies here.

    After the recursive sink tube pushes on $S$ we may assume that no remaining sector on $S$ represents a sink disk of the branched surface. To check that the resulting branched surface is sink-disk-free, we need only consider the branch sectors corresponding to the collapsed pointed disks during the recursive sink tube pushes on $S$ (e.g. see the two collapsed disks with basepoints $z$ and $\delta_w$ respectively in the middle right picture of Figure~\ref{fig:asso_primi_red}). Here the old argument in~\cite{lyu2024knot} still works, except for the slight difference that $\delta$ does not refer to any cusp or boundary. However, since our splittings never cross $\delta$, the branch sector containing $\delta$ remains an annulus sector after the splittings, and is therefore not a sink disk.

    It follows that after the splittings we get a branched surface with no sink disks. In light of Subsection~\ref{subsec:incomp}, we may safely apply Lemma~\ref{lem:sk_disk_free} to conclude that our branched surface fully carries a lamination. So does the original associated branched surface before the splittings.
\end{proof}

\begin{prop}
    Let $(\Sigma,\alpha,\beta,z,w)$ be a reduced, incoherent, and primitive diagram in $S^1\times S^2$. Let $S_0$ be the quadrilateral sector that shares a boundary $\beta$-arc with the $\beta$-sink bigon containing $w$. Then $S_0$ is a source sector, and the type I modified branched surface $\mathcal{B}_{S_0}$ associated to $S_0$ (see Definition~\ref{def:modified_br}) fully carries a lamination.
    \label{prop:modi_primi}
\end{prop}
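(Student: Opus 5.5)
The plan is to mirror the proof of Proposition~\ref{prop:asso_primi}: reduce the torus picture to the 4-pointed sphere $(S,z,w,\delta_z,\delta_w)$ and perform sink tube pushes there. The difference is that the modified source sector $S_0$ now has reversed co-orientation rather than being removed, and the bigon containing $w$ is kept.

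\textbf{Step 1: $S_0$ is a source sector.} Orient $\alpha,\beta$ so that the bigon containing $w$ is $\alpha$-source and $\beta$-sink. By Lemma~\ref{lem:char_sink_tube} there are no parallel arcs or sectors. The quadrilateral $S_0$ sharing the boundary $\beta$-arc with this bigon is therefore either $\beta$-sink or $\beta$-source, and likewise for $\alpha$. Crossing the common $\beta$-arc flips the side of $\beta$, so the $\alpha$-arcs of $S_0$ are $\beta$-source. For the $\alpha$-type, I will use that the rainbow arcs of a primitive diagram alternate (no vertical arcs, no parallel sectors). Then the $\alpha$-type of $S_0$ agrees with that of the bigon, namely $\alpha$-source, and $S_0$ has clockwise boundary. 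A quick check on the rational-tangle model should confirm this. Incoherence is needed only to make $\mathcal{B}_{S_0}$ a legitimate type I modification.

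\textbf{Step 2: sink tube push.} Next I would do the $\beta$-sink tube push in $\mathcal{B}_{S_0}$. Since $S_0$ is adjacent to the $\beta$-sink bigon, exactly one long boundary $\beta$-arc of the sink tube (or its continuation) meets the reversed sector. As recalled in the background, the push is then forced: the other long $\beta$-arc is pushed onto it. Following the proof of Proposition~\ref{prop:asso_primi}, I would cut along the annulus core $\delta$ and pass to $S$. There the remaining $\beta$-curve consists of two small circles around the sink-direction basepoints together with a rational tangle strand. Collapsing the circles and collapsing $\alpha$ in its source direction again lands us in the setting of~\cite{lyu2025persistent}, Lemma~6.5. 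The reversed sector $S_0$ now plays the role that the removed bigon played before: its boundary pins a fixed intersection point, and the recursive sink tube pushes of~\cite{lyu2024knot}, Subsection~3.5 fix that point.

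\textbf{Step 3: no sink disks, then conclude.} After the recursive pushes, I would verify there are no sink disks, checking three kinds of sectors:
\begin{enumerate}[(1)]
\item the sectors on $S$, handled by the recursion;
\item the collapsed pointed disks, which contain cusps or boundary;
\item the sector containing $\delta$, which remains an annulus because no push crosses $\delta$.
\end{enumerate}
One must also check that the reversed $S_0$ itself, and the sectors adjacent to it, are not sink disks. Since $S_0$ was a source before reversal, its reversal has all branch directions pointing out except along arcs adjacent to the bigon containing $w$, which carries the puncture. Finally, Lemmas~\ref{lem:meri_disk}, \ref{lem:non-sep_sphere} and \ref{lem:taut} (the taut features hold for type I modifications) together with the Reeb torus discussion let me apply Lemma~\ref{lem:sk_disk_free}.

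\textbf{Main obstacle.} I expect the hardest part to be Step 2's compatibility. In $S^1\times S^2$ the two bigons may be far apart, so I must verify that the recursive sink tube pushes on the rational tangle never move the boundary of the reversed sector $S_0$ or cross $\delta$. My first approach would be to track the fixed point $e$ as the corner of $S_0$ and check that the recursion of~\cite{lyu2024knot} only pushes arcs away from $e$.
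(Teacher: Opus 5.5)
Your Step 1 is fine. In fact $S_0$ is $\alpha$-source for the more direct reason that it shares the bigon's $\alpha$-source $\beta$-arc. Note also that incoherence is what guarantees $S_0$ is a quadrilateral at all, since it forces several rainbow arcs on each side of $\alpha$. Otherwise the region across that $\beta$-arc could be the annulus.

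\textbf{The gap.} It is in Steps 2--3, where you deliberately keep the bigon $W$ containing $w$. Originally $W$ is $\alpha$-source, and its only incoming branch direction is along the $\beta$-arc it shares with $S_0$. Once $S_0$ is reversed, that is no longer incoming, so $W$ becomes a \emph{source sector} of $\mathcal{B}_{S_0}$. If you keep $W$ and run the push-and-collapse procedure, then collapsing the small $\beta$-circle around $w$ collapses a small region next to $W$ that is a genuine sink disk. Your item (2), ``collapsed pointed disks contain cusps or boundary,'' is false for this sector, so the branched surface you end with is not sink-disk-free. Your check that the reversed $S_0$ itself is not a sink disk does not address this.

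\textbf{The missing step.} The paper first removes the source sector $W$ to get $\mathcal{B}_{S_0}'$. It then invokes \cite{lyu2025persistent}, Lemma~6.7: $\mathcal{B}_{S_0}'$ fully carries a lamination if and only if $\mathcal{B}_{S_0}$ does. All subsequent splitting is done on $\mathcal{B}_{S_0}'$.

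\textbf{Push and recursion.} Your description of these also does not match what is needed.
\begin{enumerate}[(1)]
\item $S_0$ shares two vertices (the corners of $W$) with the $\beta$-sink tube. So the ``forced'' push from the background, which assumes exactly one long arc meets the modified sector, cannot push the whole long boundary $\beta$-arc without moving $\partial S_0$. The push must be modified to stop right before the sink bigon.
\item As a result, the picture on $(S,z,w,\delta_z,\delta_w)$ is not that of Proposition~\ref{prop:asso_primi} plus one pinned point. The surviving $\beta$-strand is the \emph{other} strand of the same rational tangle, running from $w$ rather than $z$ to one of $\delta_z,\delta_w$.
\item You must keep \emph{both} $\beta$-arcs of $S_0$ fixed: one collapses to $w$, the other to the intersection point $e$ on $w\delta_w$ next to $w$.
\item This two-constraint situation is the one handled by the recursive sink tube pushes in \cite{lyu2025persistent}, Lemma~6.17. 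It is not the single-pinned-point recursion of \cite{lyu2024knot}, Subsection~3.5 that you invoke.
\end{enumerate}
Your ``main obstacle'' paragraph correctly senses that $\partial S_0$ must not move. But without removing $W$ and using this modified push and recursion, the argument does not close.
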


\begin{proof}

    \begin{figure}[!hbt]
        \begin{overpic}[scale=0.5]{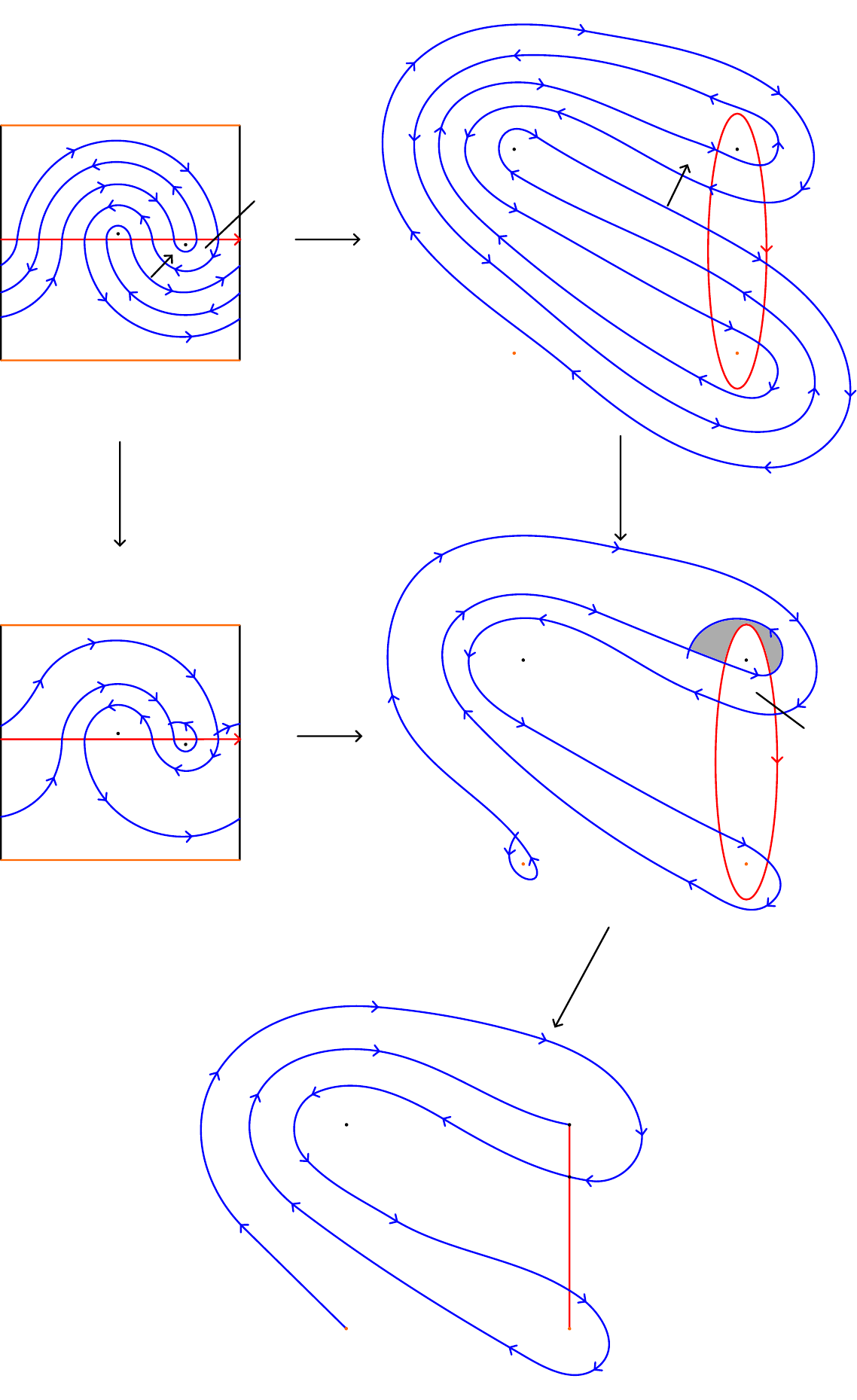}
            \put(8,82.3){\Tiny $z$}
            \put(12.6,83){\Tiny $w$}
            \put(10,73.2){\Tiny \color{orange}$\delta_w$}
            \put(10,91.5){\Tiny \color{orange}$\delta_z$}
            \put(17.2,83){\Tiny \color{red}$\alpha$}
            \put(7.6,76.7){\Tiny \color{blue}$\beta$}
            \put(18,86){\Tiny $S_0$}
            \put(19,81.7){\Tiny collapsing $\delta$'s}
            \put(37,88.6){\Tiny $z$}
            \put(36.5,73.3){\Tiny \color{orange}$\delta_z$}
            \put(52,89.8){\Tiny $w$}
            \put(51.8,73.5){\Tiny \color{orange}$\delta_w$}
            \put(9.2,65.5){\Tiny (modified)}
            \put(9.2,64){\Tiny sink tube push}
            \put(45,65.5){\Tiny (modified)}
            \put(45,64){\Tiny sink tube push}
            \put(7.8,48){\Tiny $z$}
            \put(12.6,47.3){\Tiny $w$}
            \put(10,37.4){\Tiny \color{orange}$\delta_w$}
            \put(10,56){\Tiny \color{orange}$\delta_z$}
            \put(17.2,46.5){\Tiny \color{red}$\alpha$}
            \put(7.6,41.2){\Tiny \color{blue}$\beta$}
            \put(19,46.2){\Tiny collapsing $\delta$'s}
            \put(38,52){\Tiny $z$}
            \put(35.5,37.2){\Tiny \color{orange}$\delta_z$}
            \put(53,53.3){\Tiny $w$}
            \put(52.4,36.9){\Tiny \color{orange}$\delta_w$}
            \put(57.7,47){\Tiny $S_0$}
            \put(42,30){\Tiny collapsing small circles and $\alpha$}
            \put(25,19.8){\Tiny $z$}
            \put(25,5.3){\Tiny \color{orange}$\delta_z$}
            \put(40.5,20.1){\Tiny $w$}
            \put(40.8,4.7){\Tiny \color{orange}$\delta_w$}
            \put(40.8,16){\Tiny $e$}
        \end{overpic}
        \caption{Rational tangle strand from knot (10,5,0,3) again}
        \label{fig:modified_primi_red}
    \end{figure}

    Suppose we put $\alpha$ in the standard horizontal position. Since the diagram is incoherent, there must be multiple rainbow arcs on each side of $\alpha$ (hence $S_0$ must exist). Recall our conventions that the bigon containing $w$ is $\alpha$-source and $\beta$-sink. Since the rainbow arcs are alternating, $S_0$ must be both $\alpha$-source and $\beta$-source, thus a source sector of the branched surface obtained by smoothing the 2-complex $B$. It follows that the modified branched surface $\mathcal{B}_{S_0}$ is well defined.

    One additional step is needed here. After reversing $S_0$, the bigon containing $w$ becomes a \textit{source sector} in $\mathcal{B}_{S_0}$, and can be further removed to get a branched surface $\mathcal{B}_{S_0}'$. Moreover, by~\cite{lyu2025persistent}, Lemma 6.7, $\mathcal{B}_{S_0}'$ fully carries a lamination if and only if $\mathcal{B}_{S_0}$ does. We henceforth proceed with $\mathcal{B}_{S_0}'$ to find laminations (since its branch locus is simpler).

    The rest of the proof is parallel to what we did in Proposition~\ref{prop:asso_primi}, see Figure~\ref{fig:modified_primi_red} as an example. In fact, there is still an annulus sector on $\Sigma$, whose core curve we denote as $\delta$. We can still formally cut $\Sigma$ along $\delta$ and collapse the boundary circles to get a 4-pointed sphere $(S,z,w,\delta_z,\delta_w)$. We can still define a ``modified'' $\beta$-sink tube push\footnote{Since we reversed $S_0$ (which shares two vertices with the $\beta$-sink tube), we cannot push the whole long boundary $\beta$-arc of the sink tube; instead, we stop right before the $\beta$-sink bigon, see Figure~\ref{fig:modified_primi_red}. This is what ``modified'' means.} on $\Sigma$ that descends to $S$ (see~\cite{lyu2025persistent}, Subsection~6.3), where after the push the remaining $\beta$ on $S$ can be characterized as a rational tangle strand. We can still collapse the small circles to the basepoints\footnote{If we are working with $\mathcal{B}_{S_0}$, then when collapsing the small disk to $w$ we will actually be collapsing a small sink disk, see the shadowed region in the middle right picture of Figure~\ref{fig:modified_primi_red}. By removing the $w$-bigon and working with $\mathcal{B}_{S_0}'$ instead we can avoid this issue.}. We can still collapse $\alpha$ in its source direction to the strand $w\delta_w$, without collapsing any sink disk candidates.

    The difference here is that 1) the $\beta$-rational tangle strand is now connecting $w$ (instead of $z$) to one of $\delta_z,\delta_w$ (i.e. we are taking the other strand of the same rational tangle), and 2) when pushing the $\beta$-arcs on $S$, we now need to fix the 2 boundary $\beta$-arcs of $S_0$, one collapsing to $w$, and the other to the intersection point $e$ on $w\delta_w$ next to $w$. This is exactly the same situation as in the proof of~\cite{lyu2025persistent}, Lemma~6.17 in Subsection~6.4, where sink tube pushes are again performed recursively to eliminate sink disks.

    Similarly to the proof of Proposition~\ref{prop:asso_primi}, the branch sector corresponding to $\delta$ remains an annulus sector and is not a sink disk. It follows that after the splittings we get a sink-disk-free branched surface, and by Lemma~\ref{lem:sk_disk_free} it fully carries a lamination. Hence $\mathcal{B}_{S_0}'$ and $\mathcal{B}_{S_0}$ also fully carry laminations.
\end{proof}

\bibliography{references.bib}
\bibliographystyle{alpha}

\end{document}